\documentclass[10pt]{article}

\usepackage{hyperref}

\usepackage{titlesec}

\usepackage{amsmath}

\usepackage{booktabs}
\usepackage{tabularx}

\usepackage{epsfig, amssymb, amsfonts, dsfont}
\usepackage{amsthm}
\usepackage{amstext}
\usepackage{amsopn}
\usepackage{mathrsfs}
\usepackage{subfigure}
\usepackage{graphicx}
\usepackage[left=2.3cm,top=2cm,right=2.3cm,bottom=2cm, nohead,foot=1cm]{geometry}
\usepackage[makeroom]{cancel}
\usepackage{color}
\usepackage{enumerate}
\usepackage{comment}
\usepackage{stackrel}
\usepackage{stmaryrd}
\usepackage{mathtools}
\usepackage{amsbsy}

\usepackage{tikz}
\usepackage{tikz-cd}

\SetSymbolFont{stmry}{bold}{U}{stmry}{m}{n}

\numberwithin{equation}{section}

\newtheorem{theorem}{Theorem}[section]

\newtheorem{lemma}[theorem]{Lemma}

\newtheorem{corollary}[theorem]{Corollary}
\newtheorem{proposition}[theorem]{Proposition}

\newtheorem{notation}[theorem]{Notation}
\theoremstyle{definition}

\newtheorem{remark}[theorem]{Remark}

\newcommand{\R}{{\mathbb R}}

\newcommand{\Hess}{{\mathbf{H}}}

\DeclareFontFamily{U}{mathx}{\hyphenchar\font45}
\DeclareFontShape{U}{mathx}{m}{n}{
      <5> <6> <7> <8> <9> <10>
      <10.95> <12> <14.4> <17.28> <20.74> <24.88>
      mathx10
      }{}
\DeclareSymbolFont{mathx}{U}{mathx}{m}{n}
\DeclareMathSymbol{\bigtimes}{1}{mathx}{"91}

\usepackage{relsize}

\newcommand{\Tr}{{\textup{Tr}}}

\title{A submartingale for the probability of avoiding the origin in one-point interaction ground-state diffusion: $d \in \{2,3\}$}

\date{  }

 \author{\textbf{Barkat Mian}\footnote{ {\tt bmian@utk.edu}} \vspace{.1cm}  \\  University of Tennessee Knoxville, Department of Mathematics   }

\begin{document}
\maketitle

\begin{abstract}
We study the near-origin behavior on $[0,T]$ of the singular diffusion whose transition density is given by a Doob transform of the integral kernel of the semigroup generated by the \(d\)-dimensional Schr\"odinger operator \(L^\gamma\) with a one-point potential at the origin, where the driving family is the ground state of \(L^\gamma\) and \(d\in\{2,3\}\). We construct a submartingale whose increasing component grows only at times when the diffusion visits the origin. Using this submartingale, we show that the diffusion hits the origin with positive probability and that, conditionally on hitting the origin by time \(T\), the first hitting time has a truncated generalized inverse Gaussian (GIG) distribution. We further study the dynamics under conditioning to avoid the origin: under the conditional law, the diffusion is not a standard Brownian motion, but instead admits a representation in terms of a regularized drift and a continuous martingale. While these properties are known in dimension two, the present submartingale-based approach provides an alternative verification and treats dimensions two and three in a unified manner.
\end{abstract}

\vspace{.2cm}

\section{Introduction}
Fix \(d\in\{2,3\}\). The \(d\)-dimensional heat equation
with a one-point interaction at the origin can be formally represented as
\begin{align}\label{HeatEquationPointInteraction}
\frac{\partial u}{\partial t}(t,x)
=
\frac12\Delta u(t,x)
+
\nu\delta_0(x)u(t,x),
\qquad
t>0,\quad x\in\mathbb R^d,
\end{align}
where \(\delta_0\) denotes the Dirac delta measure concentrated at the
origin, \(\Delta:=\sum_{j=1}^d\frac{\partial^2}{\partial x_j^2}\)
denotes the \(d\)-dimensional Laplace operator, and \(\nu>0\) denotes an infinitesimal coupling constant. The formal operator \(\frac12\Delta+\nu\delta_0(x)\) appearing in~\eqref{HeatEquationPointInteraction} can be rigorously realized as a self-adjoint extension of \(\frac12\Delta |_{C_0^\infty(\mathbb R^d\setminus\{0\})}\) in \(L^2(\mathbb R^d)\), and these self-adjoint extensions form a one-parameter family \(\{L^\alpha\}_{\alpha\in(-\infty,\infty]}\). In this work, we consider \(L^\gamma\) with \(\gamma>0\). The operator \(L^\gamma\) can also be obtained as the norm-resolvent limit of Schr\"odinger operators with suitably rescaled short-range potentials; see, for instance,~\cite[Chaps.~I.1 and I.5]{AGHH}. In particular, for \(d=3\), \(L^\gamma\) can be approximated as \(\varepsilon\downarrow0\) by
\begin{align}\label{RegularizedHamiltonian}
\frac{1}{2}\Delta + \left(\frac{\pi^2}{8} + \gamma \varepsilon\right) \frac{1}{\varepsilon^2} v\!\left(\frac{\cdot}{\varepsilon}\right), \qquad \varepsilon > 0,
\end{align}
where $v$ is a bounded, compactly supported potential satisfying $\int_{\mathbb{R}^3} v(x) \, dx = \frac{4\pi}{3}$; see~\cite[Eq.~(5)]{CKMV2}. For the characterization of the domain of the operator \(L^\gamma\), see~\cite[Thm.~1.1.3]{AGHH} or~\cite[Eq.~(2.1)]{GrummtKolb}. The approximation scheme for \(L^\gamma\) can be found in~\cite[Chap.~I.5]{AGHH} or in the discussion preceding~\cite[Eq.~(1.2)]{CM} for \(d=2\).

The self-adjoint operator \(L^\gamma\) can be roughly understood as \(\frac{1}{2}\Delta\) on \(\R^d\setminus\{0\}\), but the fundamental solution \(p^\gamma\) to the equation \(\partial_tu=L^\gamma u\) on \((0,\infty)\times(\R^d\setminus\{0\})\) is different from the free heat kernel and has been computed in~\cite{Albeverio}. For \(t>0\) and \(x,y\in\R^d\setminus\{0\}\), \(p_t^\gamma(x,y)\) admits the explicit decomposition
\begin{align}\label{FundamentalSolution}
p_t^\gamma(x,y)
=
g_t(x-y)+h_t^\gamma(x,y),
\end{align}
where \(g_t(z):=(2\pi t)^{-d/2}e^{-|z|^2/(2t)}\) denotes the free heat kernel on \(\R^d\), and the kernel \(h_t^\gamma(x,y)\) (see~\eqref{DefPerturbedKernel}) represents the singular perturbation term induced by the point potential at the origin; see~\cite[Eqs.~(3.4) and (3.15)]{Albeverio}. Since \(\int_{\mathbb R^d\setminus\{0\}}p_t^\gamma(x,y)\,dy>1\), the kernel \(p_t^\gamma(x,y)\) does not define a Markov transition density. For a fixed \(T>0\), if \(\{H_t^\gamma\}_{t\in[0,T]}\) is a suitable strictly positive driving family such that the Doob transform of \(p_t^\gamma(x,y)\), given by
\begin{align}\label{DoobTransformedKernel}
q_{s,t}^{T,\gamma}(x,y)
:=
p_{t-s}^\gamma(x,y)
\frac{H_{T-t}^\gamma(y)}{H_{T-s}^\gamma(x)},
\qquad
0\le s<t\le T.
\end{align}
defines a transition probability density, then the corresponding Markov
process \((X_t)_{t\in[0,T]}\), if it can be constructed, describes the
trajectory of a particle subject to the attractive one-point interaction and can be used to understand its interaction with the singular point. Table~\ref{tab:diffusions_comparison} lists four such existing diffusions together with their corresponding driving families.
\begin{table}[htbp] 
\centering
\caption{One-point interaction diffusions and their driving families}
\label{tab:diffusions_comparison}
\begin{tabularx}{\linewidth}{l X X}
\toprule
\textbf{Diffusions} & \textbf{Driving families \((d=2)\)} & \textbf{Driving families \((d=3)\)} \\
\midrule
\textbf{Ground-state diffusion (GSD)}
&
\(\displaystyle
H_t^\gamma(x)
=
e^{\gamma t}
\sqrt{\frac{2\gamma}{\pi}}\,
K_0\!\left(\sqrt{2\gamma}|x|\right)
\)
&
\(\displaystyle
H_t^\gamma(x)
=
e^{\gamma^2t/2}
\sqrt{\frac{\gamma}{2\pi}}\,
\frac{e^{-\gamma|x|}}{|x|}
\)
\\
\addlinespace
\textbf{Total-mass diffusion (TMD)}
&
\(\displaystyle
H_t^\gamma(x)
=
\int_{\mathbb R^2}
p_t^\gamma(x,y)\,dy
\)
&
\(\displaystyle
H_t^\gamma(x)
=
\int_{\mathbb R^3}
p_t^\gamma(x,y)\,dy
\)
\\
\bottomrule
\end{tabularx}
\end{table}

Recall that a standard \(d\)-dimensional Brownian motion started away from the origin almost surely never hits the origin for \(d\ge2\): although a two-dimensional Brownian motion comes arbitrarily close to the origin, it never reaches it, whereas a three-dimensional Brownian motion is transient and ultimately escapes to infinity. This raises the natural question of whether the attractive one-point interaction is sufficiently strong to alter this behavior and force the associated diffusion to visit the singular point with positive probability. While the singular form of their transition densities suggests such behavior, its verification is not immediate from the construction of these diffusions alone. In~\cite{CM}, a one-point interaction diffusion, which we refer to here as the two-dimensional total-mass diffusion (2d TMD), was constructed and studied, and its transition density is of the form~\eqref{DoobTransformedKernel} with \(H_t^\gamma(x)=\int_{\mathbb R^2}p_t^\gamma(x,y)\,dy\). The 2d TMD also arises naturally in connection with the second moment measures associated with the continuum polymer measures corresponding to the critical two-dimensional stochastic heat flow (2d SHF)~\cite{CSZ5}; see~\cite[Sec.~5.3]{CM2}. For recent works on the 2d SHF, see~\cite{LZ,Tsai,Chen3,GuTsai,CT,NakashimaSHF,BCT}. In particular,~\cite[Thm.~2.4]{CM} establishes that the 2d TMD hits the origin with positive probability and further describes the process conditioned to avoid the origin up to a prescribed time horizon. A key tool in this analysis is the submartingale introduced in~\cite[Prop.~3.4]{CM}, whose increasing component grows only when the diffusion visits the origin. The corresponding near-origin analysis, using an analogous submartingale, was extended recently to the case \(d=3\) in~\cite{MianPathwise} for the 3d TMD constructed in~\cite{CKMV2}. See also~\cite{CKMV1} for the related heat equation with a compactly supported potential, and~\cite{FitzsimmonsLi,Nakashima,WangSHE} for other related works in the three-dimensional setting.

The main goal of the present work is to construct the analogous submartingale for the finite-horizon restrictions of the infinite-horizon ground-state diffusions (GSDs) constructed in~\cite{Chen2} for \(d=2\) and in~\cite[Thm.~3.3]{CKMV2} for \(d=3\). The GSDs have transition densities of the form~\eqref{DoobTransformedKernel} corresponding to the ground-state transform \(H_t^\gamma(x)=e^{\lambda_\gamma t}\psi_\gamma(x)\), where \(\lambda_\gamma\) is the simple positive eigenvalue of the operator \(L^\gamma\) with normalized eigenfunction
\begin{align}\label{EigenFunctionEigenValueUnified}
\psi_\gamma(x)
=
\frac{
\sqrt{2\lambda_\gamma}
}{
\pi^{(d-1)/2}|x|^{d/2-1}
}
K_{d/2-1}\!\left(
\sqrt{2\lambda_\gamma}\,|x|
\right),
\qquad x \in \R^d\setminus \{0\},
\end{align}
see also~\eqref{EigenFunctionEigenValue}. We use this submartingale to show that these GSDs have positive probability of hitting the origin on \( [0,T] \), and that, conditionally on \(\{\tau\le T\}\), the first hitting time \(\tau\) follows a truncated generalized inverse Gaussian (GIG) distribution. Moreover, on the event that the diffusions stay away from the origin, unlike the TMDs, the conditional processes are not standard Brownian motion, but instead admit a representation in terms of a regularized drift and a continuous martingale. While these properties are known for the 2d GSD, the present submartingale-based approach provides an alternative verification; moreover, the formulation developed here treats the cases \(d=2\) and \(d=3\) within a unified framework.

In the case \(d=2\),~\cite[Prop.~2.6]{CM} and~\cite[Eq.~(2.15) and Props.~2.6 and~2.8]{Chen2} establish that these singular diffusions satisfy the following SDE:
\begin{align}\label{SDE}
dX_t
=
dW_t
+
\nabla\log H_{T-t}^\gamma(X_t)\,dt,
\end{align}
where \(W\) is a two-dimensional Brownian motion. In contrast, the 3d TMD satisfies the above SDE only locally on intervals during which the process remains away from the origin; see~\cite[Prop.~2.1]{MianPathwise}. For the 3d GSD,~\cite[Remark after Thm.~3.3]{CKMV2} identifies the generator
\[
\mathcal Lf(x)
=
\frac12\Delta f(x)
-
\left(1+\frac1{|x|}\right)
\frac{\partial f}{\partial r}(x),
\qquad
x\neq0,
\]
for suitable functions \(f\). This suggests, in particular, that the 3d GSD satisfies the SDE~\eqref{SDE} away from the origin, since in this case the drift is given by
\(b^\gamma(x)=-(\gamma+|x|^{-1})x/|x|\); see~\eqref{Drfitfunction}. As in the 3d TMD case, this drift may fail to be integrable near the origin. Indeed, for fixed \(t>0\), \(x\in\R^3\setminus\{0\}\), and \(y\to0\), both the point-interaction heat kernel \(p_t^\gamma(x,y)\) and the ground state \(\psi_\gamma(y)\) are of order \(|y|^{-1}\). Consequently,
\(q_t^\gamma(x,y)\asymp |y|^{-2}\) as \(y\to0\) by~\eqref{DefTransDensity}, and
\[
\int_0^T
\int_{\{y\in\mathbb R^3:|y|\le1\}}
q_t^\gamma(x,y)
\left(\gamma+\frac1{|y|}\right)
\,dy\,dt
\]
may fail to be finite. Consequently, standard existence theories for SDEs with singular drifts (e.g.,~\cite{Fukushima,Streit,Trutnau,Krylov,Zhang,Jin,Kinzebulatov})  may not be applied directly in this setting. See Lemma~\ref{lemmaLocalizedSDEAwayOrigin} for a precise statement concerning the SDE representation of the 3d GSD.

\subsection{List of special functions and notation}
Fix \(d\in\{2,3\}\) and \(T,\gamma>0\). For notational convenience, we suppress the dependence on \(d\) in the notation for the special functions appearing in this work. When a definition depends on the dimension, the dimension is specified by the Euclidean space to which its arguments belong; when the same formula applies in both dimensions, no explicit reference to the dimension is made. The key special functions are summarized below.
\begin{enumerate}[(i)]
\item For \(t>0\), the kernel \(h_t^\gamma(x,y)\) appearing in the definition~\eqref{FundamentalSolution} of the one-point interaction kernel \(p_t^\gamma(x,y)\) is given by
\begin{align}\label{DefPerturbedKernel}
h_t^\gamma(x,y)
:=
\begin{cases}
\displaystyle
2\pi\gamma
\int_{0<r<s<t}
g_r(x)
\nu'\bigl(\gamma(s-r)\bigr)
g_{t-s}(y)
\,ds\,dr,
&
x,y\in\mathbb R^2\setminus\{0\}, \\[4mm]
\displaystyle
\frac{t}{|x||y|}
\bar g_t\bigl(|x|+|y|\bigr)
+
\frac{\gamma t}{|x||y|}
\int_0^\infty
e^{\gamma u}
\bar g_t\bigl(u+|x|+|y|\bigr)
\,du,
&
x,y\in\mathbb R^3\setminus\{0\}.
\end{cases}
\end{align}
where, recalling that $g_t(x) \coloneqq (2\pi t)^{-d/2} e^{-x^2/(2t)}$ is the $d$-dimensional free heat kernel, the function $\bar{g}_t(r) \coloneqq (2\pi t)^{-3/2} e^{-r^2/(2t)}$ for $r \ge 0$ denotes the radial version of the three-dimensional free heat kernel, and $\nu'(a)$ is the derivative of the Volterra function given by $\nu(a) = \int_{0}^{\infty} \frac{a^s}{\Gamma(s+1)} \, \mathrm{d}s$. The kernel~\eqref{FundamentalSolution} corresponds to~\cite[Eq.~(2.4)]{CM} with the parameter identification \(\gamma=\lambda\) in the case \(d=2\), whereas in \(d=3\) it corresponds to~\cite[Eq.~(6)]{CKMV2} with the same parameter \(\gamma\). It also appears in several related works; see, for instance,~\cite{Fleischmann,APT,GrummtKolb,Chen1,GQT,CSZ5}.

\item The normalized positive eigenfunction \(\psi_\gamma\) of the one-point interaction self-adjoint operator \(L^\gamma\) and its corresponding ground-state eigenvalue \(\lambda_\gamma\) are given by
\begin{align}\label{EigenFunctionEigenValue}
\psi_\gamma(x) := 
\begin{cases}
\displaystyle
\sqrt{\frac{2\gamma}{\pi}} K_0(\sqrt{2\gamma}|x|), & x \in \mathbb{R}^2 \setminus \{0\}, \\[4mm]
\displaystyle
        \sqrt{\frac{\gamma}{2\pi}} \frac{e^{-\gamma|x|}}{|x|}, & x \in \mathbb{R}^3 \setminus \{0\},
\end{cases}
\quad \quad \text{with} \quad  \lambda_\gamma := 
\begin{cases}
\displaystyle
\gamma, & d = 2, \\[4mm]
\displaystyle
\frac{\gamma^2}{2}, & d = 3,
    \end{cases}
\end{align}
where $K_\nu$ denotes the modified Bessel function of the second kind of order $\nu$; see~\eqref{DefCompleteBessel}. Using the identity $K_{1/2}(r)=\sqrt{\pi/(2r)}\,e^{-r}$ for $r>0$, the eigenfunction $\psi_\gamma$ can be written in the unified form~\eqref{EigenFunctionEigenValueUnified}.

\item We denote the positive ground-state driving family by
\(\Psi_t^\gamma(x):=e^{\lambda_\gamma t}\psi_\gamma(x)\). Taking
\(H_t^\gamma=\Psi_t^\gamma\) in~\eqref{DoobTransformedKernel}, the
corresponding Doob-transformed transition density is time-homogeneous
and takes the form
\begin{align}\label{DefTransDensity}
q_t^\gamma(x,y)
:=
e^{-\lambda_\gamma t}
\frac{\psi_\gamma(y)}{\psi_\gamma(x)}
p_t^\gamma(x,y),
\qquad
t>0,\quad x,y\in\R^d\setminus\{0\}.
\end{align}

\item The time-independent ground-state drift is
\begin{align} \label{Drfitfunction} 
b^\gamma(x)
:=
\nabla\log\psi_\gamma(x)
=
\begin{cases}
\displaystyle
-\sqrt{2\gamma}\,
\frac{
K_1\bigl(\sqrt{2\gamma}\,|x|\bigr)
}{
K_0\bigl(\sqrt{2\gamma}\,|x|\bigr)
}
\frac{x}{|x|},
&
x\in\mathbb R^2\setminus\{0\},
\\[4mm]
\displaystyle
-\left(
\gamma+\frac1{|x|}
\right)
\frac{x}{|x|},
&
x\in\mathbb R^3\setminus\{0\},
\end{cases}
\end{align}
where $\nabla = (\partial_{x_1}, \dots, \partial_{x_d})$ represents the gradient operator and the regularized (time-dependent) drift defined for $t > 0$ by
\begin{align} \label{RegularizedDrfitfunction}
\widehat b_t^\gamma(x)
:=
\nabla\log(g_t*\psi_\gamma)(x)
=
\begin{cases}
\displaystyle
-\sqrt{2\gamma}\,
\frac{
K_1\bigl(\sqrt{2\gamma}\,|x|,\gamma t\bigr)
}{
K_0\bigl(\sqrt{2\gamma}\,|x|,\gamma t\bigr)
}
\frac{x}{|x|},
&
x\in\mathbb R^2\setminus\{0\},
\\[4mm]
\displaystyle
-\gamma\,
\frac{
K_{3/2}\bigl(\gamma|x|,\frac{\gamma^2t}{2}\bigr)
}{
K_{1/2}\bigl(\gamma|x|,\frac{\gamma^2t}{2}\bigr)
}
\frac{x}{|x|},
&
x\in\mathbb R^3\setminus\{0\},
\end{cases}
\end{align}
where \(K_\nu(\cdot,\cdot)\) and \(K_\nu\) denote the complete and incomplete modified Bessel functions of the second kind, respectively, as defined in~\eqref{DefIncompleteBessel} and~\eqref{DefCompleteBessel}.

\item For \(t\ge0\) and \(x \in \R^d \setminus \{0\}\), define
\begin{align} \label{Sfunction}
S_t^\gamma(x)
:=
\frac{(g_t * \Psi_0^\gamma)(x)}{\Psi_t^\gamma(x)}
=
e^{-\lambda_\gamma t}
\frac{(g_t*\psi_\gamma)(x)}
{\psi_\gamma(x)} .
\end{align}
Moreover, we extend \(S_t^\gamma\) to the origin by setting
\(S_t^\gamma(0):=0\) for \(t>0\), and we have
\(S_0^\gamma(x)=1\) for every \(x\neq0\). The function
\(S_t^\gamma\) satisfies the partial differential
equation~\eqref{PDEforS} on \(\mathbb R^d\setminus\{0\}\).
\end{enumerate}

\subsection{Canonical realization of finite-horizon ground-state diffusions in \texorpdfstring{$d\in\{2,3\}$}{Lg} }  \label{SecCanonicalGSD}

Let $d \in \{2,3\}$ and fix $T, \gamma > 0$. Let $\Omega = C([0,T]; \mathbb{R}^d)$ be the canonical path space of continuous functions equipped with the uniform metric and Borel \(\sigma\)-algebra \(\mathcal B(\Omega)\), and let $X = \{X_t\}_{t \in [0,T]}$ denote the canonical evaluation process $X_t(\omega) = \omega(t)$. We equip $\Omega$ with the natural filtration $\{\mathcal{F}_t\}_{t \in [0,T]}$, where $\mathcal{F}_t \coloneqq \sigma(X_s : 0 \le s \le t)$.

The probability measure $\mathbb{P}_x^{T,\gamma}$ appearing in the lemma below is the restriction to the finite time horizon $[0,T]$ of the corresponding $d$-dimensional ground-state diffusion law. In the current canonical setting, we comment on how this restriction is obtained from the corresponding infinite-horizon law in Appendix~\ref{ProofAppLemmaCP}.

\begin{lemma}[\cite{Chen2, CKMV2}]\label{LemmaCP}
Fix $d \in \{2,3\}$, $T, \gamma > 0$, and $x \in \mathbb{R}^d \setminus \{0\}$. There exists a unique probability measure $\mathbb{P}_x^{T,\gamma}$ on $(\Omega, \mathcal{B}(\Omega))$ under which the coordinate process $X = \{X_t\}_{t \in [0,T]}$ satisfies $\mathbb{P}_x^{T,\gamma}(X_0 = x) = 1$ and is a time-homogeneous Markov process with respect to the natural filtration $\{\mathcal{F}_t\}_{t \in [0,T]}$, having transition density $q_t^\gamma(x,y)$ given by~\eqref{DefTransDensity}. 
\end{lemma}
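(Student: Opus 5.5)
The plan is to deduce the lemma from the infinite-horizon ground-state diffusions constructed in \cite{Chen2} for $d=2$ and in \cite[Thm.~3.3]{CKMV2} for $d=3$. We restrict their laws to $[0,T]$ and then identify the result on the canonical space $\Omega=C([0,T];\R^d)$.

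\textbf{Step 1: $q^\gamma$ is a Markov transition density.} First I would check directly that $q_t^\gamma(x,y)$ from~\eqref{DefTransDensity} is a time-homogeneous transition density on $\R^d\setminus\{0\}$. Positivity is immediate from~\eqref{FundamentalSolution}, since both $g_t$ and $h_t^\gamma$ are positive, and $\psi_\gamma>0$. For normalization I would use that $\psi_\gamma$ is an $L^2$ eigenfunction of the self-adjoint operator $L^\gamma$ with eigenvalue $\lambda_\gamma$. Hence $e^{tL^\gamma}\psi_\gamma=e^{\lambda_\gamma t}\psi_\gamma$, that is, $\int p_t^\gamma(x,y)\psi_\gamma(y)\,dy=e^{\lambda_\gamma t}\psi_\gamma(x)$, which gives $\int q_t^\gamma(x,y)\,dy=1$. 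The Chapman--Kolmogorov identity $\int q_s^\gamma(x,z)q_t^\gamma(z,y)\,dz=q_{s+t}^\gamma(x,y)$ follows from the semigroup property of $p^\gamma$, because the $\psi_\gamma(z)$ factors cancel. The point $\{0\}$ is Lebesgue-null, so it plays no role in these identities.

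\textbf{Step 2: a law with continuous paths and uniqueness.} By Kolmogorov's extension theorem, the kernel $q^\gamma$ and the initial point $x$ determine a unique law on $(\R^d)^{[0,T]}$, since finite-dimensional distributions determine the law. Existence of a version with continuous paths, including through visits to the origin, is the substantive input. This I would take from \cite{Chen2,CKMV2}: there the infinite-horizon process $\{X_t\}_{t\ge0}$ is constructed on $C([0,\infty);\R^d)$ as a continuous Markov process with transition density $q_t^\gamma$. Let $\pi_T:C([0,\infty);\R^d)\to\Omega$ be the restriction map, which is continuous and hence measurable. Define $\mathbb P_x^{T,\gamma}$ as the pushforward of the infinite-horizon law under $\pi_T$. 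The finite-dimensional distributions at times in $[0,T]$ are unchanged, so the canonical process has transition density $q^\gamma_t$ and starts at $x$. The Markov property with respect to $\mathcal F_t=\sigma(X_s:s\le t)$ transfers, because $\pi_T^{-1}(\mathcal F_t)$ coincides with the natural $\sigma$-algebra at time $t$ of the original process. Uniqueness on $\mathcal B(\Omega)$ holds because cylinder sets generate $\mathcal B(\Omega)$ when $\Omega$ carries the uniform topology, and these cylinder probabilities are fixed by $q^\gamma$.

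\textbf{Main obstacle.} The delicate point is matching conventions with the cited constructions. In $d=3$, \cite{CKMV2} may formulate the process through a generator or Dirichlet form, or via the transform with the parameter $\gamma$. One then has to check that its transition density is exactly~\eqref{DefTransDensity}, with $p^\gamma$ as in~\eqref{DefPerturbedKernel}, and that the process starts from every $x\neq0$ rather than only quasi-everywhere. If only a quasi-everywhere statement is available, I would first establish joint continuity of $q_t^\gamma(x,y)$ in $x\neq 0$ from the explicit formulas. The Feller property of the semigroup then upgrades the statement to every starting point. In $d=2$ I would verify the identification $\gamma=\lambda$ with \cite[Eq.~(2.4)]{CM}.
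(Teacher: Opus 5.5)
Your overall route is the paper's: push the ground-state laws of \cite{Chen2} and \cite{CKMV2} forward to $\Omega$ under the restriction map $\pi_T$. The restriction step, the transfer of the Markov property, and your cylinder-set uniqueness argument are fine. Step~1 is harmless, though redundant once a process with density $q^\gamma$ is in hand.

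The gap is the input you take for granted in Step~2. You assume the cited works construct a continuous Markov process with transition density $q_t^\gamma$ for the given $\gamma$, in the present normalization. They do not.

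\emph{Case $d=3$.} \cite[Thm.~3.3]{CKMV2} gives only the unit-coupling process $X^1$. Its density is $q_t^1(z,y)=e^{-t/2}\frac{\psi_1(y)}{\psi_1(z)}p_t^1(z,y)$ and its generator is $\frac12\Delta-(1+|x|^{-1})\partial_r$.

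\emph{Case $d=2$.} The process $Z$ of \cite{Chen2} is indexed by a parameter $\beta$ and uses a different normalization of the Laplacian. There, $\sqrt2\,Z_t$ has density $r_t^\beta(z,y)=e^{-\beta t}\frac{K_0(\sqrt\beta|y|)}{K_0(\sqrt\beta|z|)}P_t^\beta(z,y)$.

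So the check you propose, that the cited density is exactly~\eqref{DefTransDensity}, would not go through literally. The missing idea is a Brownian space--time rescaling. Your suggested remedies do not supply it. The quasi-everywhere/Feller upgrade addresses a different issue. Matching $\gamma=\lambda$ with \cite[Eq.~(2.4)]{CM} identifies the kernel $p^\gamma$ of the total-mass paper, not the ground-state process of \cite{Chen2}.

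The paper fills this step as follows.

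\emph{Rescaling for $d=3$.} Start $X^1$ at $\gamma x$, run it on $[0,\gamma^2T]$, and push forward under $(\Theta_\gamma\omega)(t)=\gamma^{-1}\omega(\gamma^2t)$. The coordinate process then has density $\gamma^3q^1_{\gamma^2t}(\gamma z,\gamma y)=q_t^\gamma(z,y)$. You can verify this from four facts:
\begin{itemize}
\item $\gamma^3g_{\gamma^2t}(\gamma\,\cdot)=g_t$;
\item $\gamma^3h^1_{\gamma^2t}(\gamma z,\gamma y)=h_t^\gamma(z,y)$, by substituting $u=\gamma v$ in~\eqref{DefPerturbedKernel};
\item $\psi_1(\gamma y)/\psi_1(\gamma z)=\psi_\gamma(y)/\psi_\gamma(z)$;
\item $\gamma^2t/2=\lambda_\gamma t$.
\end{itemize}

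\emph{Rescaling for $d=2$.} Take $\beta=2\gamma$, start $Z$ at $x/\sqrt2$, and push forward under $(\Phi\omega)(t)=\sqrt2\,\omega(t/2)$. The identity $P^{2\gamma}_{t/2}=p_t^\gamma$ then gives $r^{2\gamma}_{t/2}=q_t^\gamma$.

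The Markov property transfers because $\Theta_\gamma$ and $\Phi$ pull the natural $\sigma$-algebra at time $t$ back to the one at time $\gamma^2t$ and $t/2$, respectively. Only after this rescaling does one restrict to $[0,T]$. From there your argument goes through unchanged.
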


Recall from~\eqref{Drfitfunction} that the (time-independent) drift \(b^\gamma:\mathbb R^d\setminus\{0\}\to\mathbb R^d\) is given by \(b^\gamma(x)=\nabla\log\psi_\gamma(x)\) for \(x\neq0\). For \(x\in\mathbb R^3\setminus\{0\}\), let \(\mathcal N_x\) denote the
collection of all subsets of \(\mathbb P_{x}^{T,\gamma}\)-null sets and define
\begin{align}\label{Augmented}
\mathcal F_t^{T,x}
:=
\sigma\big\{
\mathcal F_t\cup\mathcal N_x
\big\},
\qquad
\mathcal B_x^T
:=
\sigma\big\{
\mathcal B(\Omega)\cup\mathcal N_x
\big\}.
\end{align}
The measure \(\mathbb P_{x}^{T,\gamma}\) extends uniquely to the augmented \(\sigma\)-algebra \(\mathcal B_x^T\), and we use the same symbol for the extension. In dimension \(d=2\), the SDE representation below follows directly from~\cite{Chen2}. For reference purposes, we record the SDE representations in both
dimensions in a single lemma. The proof of the following lemma for $d=3$ is in Appendix~\ref{ProoflemmaLocalizedSDEAwayOrigin}. 
\begin{lemma}\label{lemmaLocalizedSDEAwayOrigin}
Fix \(d\in\{2,3\}\), \(T,\gamma>0\), and
\(x\in\mathbb R^d\setminus\{0\}\). There exists a \(d\)-dimensional
standard Brownian motion
\(\{W_t^{T,\gamma}\}_{t\in[0,T]}\) on the probability space
\((\Omega,\mathcal B_x^T,\mathbb P_x^{T,\gamma})\), with respect to the
filtration \(\{\mathcal F_t^{T,x}\}_{t\in[0,T]}\), such that the coordinate
process \(X=\{X_t\}_{t\in[0,T]}\) satisfies
\begin{align}\label{CanonicalGSDSDE}
dX_t
=
dW_t^{T,\gamma}
+
b^\gamma(X_t)\,dt,
\qquad
X_0=x,
\end{align}
in the following sense:
\begin{enumerate}[(i)]
\item If \(d=2\), then~\eqref{CanonicalGSDSDE} holds on \([0,T]\),
\(\mathbb P_x^{T,\gamma}\)-almost surely~\cite[Prop.~4.2\((4^\circ)\)]{Chen3}.

\item If \(d=3\), then~\eqref{CanonicalGSDSDE} holds locally on every
time interval on which \(X\) remains away from
the origin.
\end{enumerate}
\end{lemma}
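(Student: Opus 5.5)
For \(d=2\) there is nothing new to prove: the statement is the SDE representation of~\cite[Prop.~4.2\((4^\circ)\)]{Chen3} (see also~\cite{Chen2}), transferred to the canonical finite-horizon law \(\mathbb P_x^{T,\gamma}\) of Lemma~\ref{LemmaCP}. So the plan concerns \(d=3\). There I would localize away from the origin and use the martingale problem for the Doob-transformed generator. For \(\varepsilon>0\), define the stopping times alternating between leaving and entering the shell structure:
\[
\sigma_0^\varepsilon:=\inf\{t\ge0:|X_t|\ge 2\varepsilon\},\qquad \tau_k^\varepsilon:=\inf\{t\ge\sigma_k^\varepsilon:|X_t|\le\varepsilon\}\wedge T,
\]
\[
\sigma_{k+1}^\varepsilon:=\inf\{t\ge\tau_k^\varepsilon:|X_t|\ge2\varepsilon\}\wedge T .
\]
The intervals \([\sigma_k^\varepsilon,\tau_k^\varepsilon]\) are those on which \(|X|\ge\varepsilon\). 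By path continuity, every interval on which \(X\) stays away from \(0\) is covered by such intervals for \(\varepsilon\) small.

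\textbf{Step 1: generator identity.} For \(f\in C_c^\infty(\mathbb R^3\setminus\{0\})\), I would show that
\[
M_t^f:=f(X_t)-f(X_0)-\int_0^t\Bigl(\tfrac12\Delta f+b^\gamma\cdot\nabla f\Bigr)(X_s)\,ds
\]
is a \(\mathbb P_x^{T,\gamma}\)-martingale. The route is through the transition density. Since \(\partial_t p_t^\gamma=\frac12\Delta_y p_t^\gamma\) on \(\mathbb R^3\setminus\{0\}\) and \(\frac12\Delta\psi_\gamma=\lambda_\gamma\psi_\gamma\) there, a direct computation with~\eqref{DefTransDensity} gives the Kolmogorov forward equation for \(q_t^\gamma\) in \(y\) away from \(0\). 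We then integrate against \(f\), which is supported away from the origin, so no boundary terms at \(0\) appear. This yields
\[
\frac{d}{dt}\int q_t^\gamma(x,y)f(y)\,dy=\int q_t^\gamma(x,y)\,(\mathcal Lf)(y)\,dy .
\]
Combining this with the Markov property of Lemma~\ref{LemmaCP} gives the martingale property.

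\textbf{Step 2: from the martingale problem to the SDE.} On \([\sigma_k^\varepsilon,\tau_k^\varepsilon]\), I would choose \(f\) equal to the coordinate functions \(x_i\) and to \(x_ix_j\) on \(\{\varepsilon/2\le|x|\le R\}\), and add a further localization in \(R\) by exit times of large balls. The stopped processes
\[
N^{i}_t:=X^i_{t\wedge\tau_k^\varepsilon}-X^i_{\sigma_k^\varepsilon}-\int_{\sigma_k^\varepsilon}^{t\wedge\tau_k^\varepsilon} b^\gamma_i(X_s)\,ds ,\qquad t\ge\sigma_k^\varepsilon,
\]
are then continuous martingales with \(\langle N^i,N^j\rangle=\delta_{ij}\,(t\wedge\tau_k^\varepsilon-\sigma_k^\varepsilon)\). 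The drift is bounded there, since \(|b^\gamma|\le\gamma+\varepsilon^{-1}\). By L\'evy's characterization, after enlarging by an independent Brownian motion on the complementary random set (possible on the augmented space, or via a product extension), we obtain a Brownian motion \(W^{T,\gamma}\). It is assembled from the pieces \(N\) on \(\bigcup_k[\sigma_k^\varepsilon,\tau_k^\varepsilon]\), and the construction is consistent as \(\varepsilon\downarrow0\) because the pieces agree on overlaps. Equation~\eqref{CanonicalGSDSDE} then holds on each such interval.

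\textbf{Main obstacle.} The delicate point is the global definition of \(W^{T,\gamma}\). Near the origin the increments \(X_t-\int b^\gamma\) need not be defined, since \(b^\gamma\) may fail to be integrable there, as noted in the introduction. For this reason I would \emph{define} \(dW:=\mathbf 1_{\{X_t\neq0\}}\,dN+\mathbf 1_{\{X_t=0\}}\,dB\) with \(B\) an auxiliary Brownian motion. To make this work I need to check that \(\operatorname{Leb}\{t:X_t=0\}=0\) a.s. This follows from Fubini and the fact that \(q_t^\gamma(x,\cdot)\) is a density, so \(\mathbb P(X_t=0)=0\) for each \(t\). I also need to verify that the resulting process is a continuous martingale with quadratic variation \(tI\); the limit \(\varepsilon\downarrow0\) of the localized pieces requires uniform \(L^2\) control, which comes from the unit quadratic variation of each piece.
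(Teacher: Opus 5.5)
Your $d=2$ argument is the paper's: it transfers \cite[Prop.~4.2]{Chen3} to the canonical law via $\beta=2\gamma$, $t\mapsto t/2$ and scaling by $\sqrt2$. For $d=3$ your route is correct in substance but genuinely different from the paper's.

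\textbf{How the paper argues.} The paper never localizes. It introduces the explicit vector field $\mathbb Q_t^\gamma=(g_t*\hat\Psi_0^\gamma)/\Psi_t^\gamma$ and shows, following \cite{MianPathwise} and Lemma~\ref{LemmaL2IncrementY}, that $Y_t=\mathbb Q_{T-t}^\gamma(X_t)$ is one global square-integrable martingale on $[0,T]$. It then defines $W^{T,\gamma}$ in one stroke as $\int_0^t\mathbf 1_{\{X_s\neq0\}}(\nabla^\dagger\mathbb Q_{T-s}^\gamma(X_s))^{-1}dY_s$ and identifies it as a Brownian motion by L\'evy. The SDE on excursion intervals follows from It\^o's formula for the inverse map $\overleftarrow{\mathbb Q}^\gamma_{T-t}$ together with Lemma~\ref{LemmaInversePhiIdentities}.

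\textbf{Comparison.} The paper's route gives $W^{T,\gamma}$ without any limiting procedure and uses the same template as the 3d TMD. The price is an explicit transform, invertibility of its Jacobian, the inverse-map PDE identities, and properties imported from \cite{MianPathwise}. Your martingale-problem route needs only two facts. The first is the forward equation for $q_t^\gamma$ against test functions supported away from $0$, which is legitimate because $X_s\neq0$ a.s.\ for each fixed $s$. The second is $\operatorname{Leb}\{t:X_t=0\}=0$. Your route is more elementary and applies to any Doob transform of $p^\gamma$ by a positive function that is space-time harmonic off the origin. Its cost is the $\varepsilon$-patching, which is the same $\chi^\varepsilon$ mechanism the paper uses for $\mathbf M^{T,\gamma,\varepsilon}$ in the proof of Proposition~\ref{PropSubMart}.

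\textbf{Correction: drop the auxiliary Brownian motion $B$.} It is not available on the augmented space, because every random variable on $(\Omega,\mathcal B_x^T,\mathbb P_x^{T,\gamma})$ is a.s.\ a functional of $X$, so nothing there is independent of $X$. A product extension would put $W$ on a different space, and it would not be $\{\mathcal F_t^{T,x}\}$-adapted. That is weaker than the lemma as stated. It is also unnecessary: since $\operatorname{Leb}\{t:X_t=0\}=0$, the term $\int_0^t\mathbf 1_{\{X_s=0\}}dB_s$ has zero quadratic variation and vanishes.

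\textbf{Making the patching rigorous.}
\begin{enumerate}[(i)]
\item Take $f_\varepsilon\in C_c^\infty(\mathbb R^3\setminus\{0\};\mathbb R^3)$ equal to the identity on $\{\varepsilon/2\le|z|\le R\}$, with the usual localization in $R$.
\item Set $U_t^\varepsilon:=f_\varepsilon(X_t)-\int_0^t\mathcal Lf_\varepsilon(X_s)\,ds$ and $W^\varepsilon:=\int_0^\cdot\chi_s^\varepsilon\,dU_s^\varepsilon$, where $\chi^\varepsilon$ is the indicator of $\bigcup_k[\sigma_k^\varepsilon,\tau_k^\varepsilon)$.
\item For $\varepsilon'<\varepsilon$, each $[\sigma_k^\varepsilon,\tau_k^\varepsilon)$ lies inside some $[\sigma_j^{\varepsilon'},\tau_j^{\varepsilon'})$, and $f_\varepsilon=f_{\varepsilon'}$ wherever $\chi^\varepsilon=1$. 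Hence $W^{\varepsilon'}-W^\varepsilon=\int(\chi^{\varepsilon'}-\chi^\varepsilon)\,dU^{\varepsilon'}$, with quadratic variation $\int_0^t(\chi^{\varepsilon'}_s-\chi^\varepsilon_s)\,ds\,I_3\to0$.
\item Doob's inequality gives a uniform $L^2$ limit $W^{T,\gamma}$ on the original space. It is $\{\mathcal F_t^{T,x}\}$-adapted, with quadratic variation $\int_0^t\mathbf 1_{\{X_s\neq0\}}ds\,I_3=tI_3$.
\item Any interval on which $X$ avoids $0$ lies in a single $[\sigma_k^\varepsilon,\tau_k^\varepsilon]$ once $\varepsilon$ is small. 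There $dW^{T,\gamma}_t=dX_t-b^\gamma(X_t)\,dt$.
\end{enumerate}
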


\subsubsection{A submartingale and the probability of avoiding the origin} \label{SectionMainResults}

For $t \in [0,T]$, recall the function $S_t^\gamma : \mathbb{R}^d \setminus \{0\} \to [0,1]$ given by~\eqref{Sfunction}, where $d \in \{2,3\}$. Fix
\(x\in\mathbb R^d\setminus\{0\}\). Since
\(\Psi_t^\gamma(x)=e^{\lambda_\gamma t}\psi_\gamma(x)\), we have
\(\frac{\partial}{\partial t}\Psi_t^\gamma(x)
=
\lambda_\gamma\Psi_t^\gamma(x)\) and
\(\frac12\Delta\Psi_t^\gamma(x)
=
\lambda_\gamma\Psi_t^\gamma(x)\), where the second identity uses that
\(\psi_\gamma\) is an eigenfunction of \(L^\gamma\) with eigenvalue
\(\lambda_\gamma\), and \(L^\gamma\) agrees with \(\frac12\Delta\) away
from the origin. Moreover, since the free heat kernel \(g_t\) satisfies
the heat equation and
\((g_t*\Psi_0^\gamma)(x)=\Psi_t^\gamma(x)S_t^\gamma(x)\), we have
\[
\frac{\partial}{\partial t}
\big(g_t*\Psi_0^\gamma\big)(x)
=
\frac12\Delta\big(g_t*\Psi_0^\gamma\big)(x)
=
\frac12\Psi_t^\gamma(x)\Delta S_t^\gamma(x)
+
\nabla S_t^\gamma(x)\cdot\nabla\Psi_t^\gamma(x)
+
\frac12S_t^\gamma(x)\Delta\Psi_t^\gamma(x).
\]
Differentiating~\eqref{Sfunction} with respect to \(t\) and using the
above identities, we obtain
\begin{align}\label{PDEforS}
\frac{\partial}{\partial t}S_t^\gamma(x)
=
\frac12\Delta S_t^\gamma(x)
+
b^\gamma(x)\cdot\nabla S_t^\gamma(x),
\qquad
t>0,\quad x\in\mathbb R^d\setminus\{0\},
\end{align}
where \(b^\gamma=\nabla\log\psi_\gamma\) is the ground-state drift
defined in~\eqref{Drfitfunction}. Moreover, recalling that the regularized
drift is given by
\(\widehat b_t^\gamma=\nabla\log(g_t*\psi_\gamma)\), it follows
from~\eqref{Sfunction} that
\begin{align}\label{GradS}
\nabla S_t^\gamma(x)
=
-S_t^\gamma(x)
\left(
b^\gamma(x)-\widehat b_t^\gamma(x)
\right),
\qquad
t>0,\quad x\in\mathbb R^d\setminus\{0\}.
\end{align}

The proof of the following proposition is in Appendix~\ref{ProofPropSubMart}.
\begin{proposition}\label{PropSubMart}
Fix \(d\in\{2,3\}\), \(T,\gamma>0\), and
\(x\in\mathbb R^d\setminus\{0\}\). The process
\(\mathbf S^{T,\gamma}=\{\mathbf S_t^{T,\gamma}\}_{t\in[0,T]}\), defined by
\[
\mathbf S_t^{T,\gamma}
:=
S_{T-t}^{\gamma}(X_t),
\qquad
t\in[0,T].
\]
is a bounded, continuous
\(\mathbb P_x^{T,\gamma}\)-submartingale with respect to
\(\{\mathcal F_t^{T,x}\}_{t\in[0,T]}\). Moreover, if
\(\mathbf S^{T,\gamma}
=
\mathbf M^{T,\gamma}+\mathbf A^{T,\gamma}\)
denotes its Doob--Meyer decomposition, then the increasing component
\(\mathbf A^{T,\gamma}\) is constant during the excursions of \(X\) away
from the origin, while the martingale component satisfies
\(\mathbf M^{T,\gamma}\in L^2(\mathbb P_x^{T,\gamma})\) and is given by
\begin{align}\label{MartingaleFormula}
\mathbf M_t^{T,\gamma}
=
S_T^\gamma(x)
+
\int_0^t
\nabla S_{T-s}^\gamma(X_s)\cdot dW_s^{T,\gamma},
\qquad
t\in[0,T].
\end{align}
\end{proposition}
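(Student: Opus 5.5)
Write $\mathbf S_t:=\mathbf S_t^{T,\gamma}$. The plan is to apply It\^o's formula to $\mathbf S_t=S_{T-t}^\gamma(X_t)$ on excursions away from the origin, using the SDE of Lemma~\ref{lemmaLocalizedSDEAwayOrigin}. The PDE~\eqref{PDEforS} then removes the drift term, so $\mathbf S$ is a local martingale there. Any increase must therefore come from visits to $0$, where $\mathbf S$ vanishes.

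\textbf{Step 1: boundedness and continuity.} Since $\psi_\gamma$ is radially decreasing and $L^\gamma$-harmonic up to the eigenvalue, one expects $0\le S_t^\gamma\le 1$. I would verify this from the identity $S_t^\gamma(x)=\mathbb P_x^{T,\gamma}$-probability-type formula; equivalently, from $(g_t*\psi_\gamma)\le e^{\lambda_\gamma t}\psi_\gamma$. The latter follows from $p_t^\gamma\ge g_t$, which holds because $h^\gamma\ge0$, together with $\int p_t^\gamma(x,y)\psi_\gamma(y)\,dy=e^{\lambda_\gamma t}\psi_\gamma(x)$.

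For continuity, I would show $S_t^\gamma(x)\to0$ as $x\to0$ uniformly for $t$ in compacts of $(0,T]$. The reason is that $g_t*\psi_\gamma$ is bounded while $\psi_\gamma(x)\to\infty$. This justifies the extension $S_t^\gamma(0)=0$ and makes $\mathbf S$ continuous on $[0,T)$. At $t=T$, $X_T\neq0$ a.s. because $X_T$ has a density, and $S_0^\gamma=1$ off the origin. Continuity at $T$ needs $S_{T-t}^\gamma(X_t)\to1$ as $t\uparrow T$. I would obtain this from the uniform convergence of $g_s*\psi_\gamma/\psi_\gamma\to1$ on sets $\{|x|\ge\epsilon\}$, combined with path continuity.

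\textbf{Step 2: It\^o on excursions.} For $\epsilon>0$, I would define stopping times $\sigma_k,\tau_k$ alternating between hitting $\{|X|\ge 2\epsilon\}$ and hitting $\{|X|\le\epsilon\}$. On $[\sigma_k,\tau_k]$ the SDE~\eqref{CanonicalGSDSDE} holds, and $S$ is smooth with bounded derivatives, the time derivative being bounded away from $t=T$. It\^o's formula together with~\eqref{PDEforS} gives
\[
\mathbf S_{t\wedge\tau_k}-\mathbf S_{t\wedge\sigma_k}=\int_{t\wedge\sigma_k}^{t\wedge\tau_k}\nabla S_{T-s}^\gamma(X_s)\cdot dW_s^{T,\gamma}.
\]
Summing over $k$ gives $\mathbf S_t=S_T^\gamma(x)+\int_0^t 1_{\{|X_s|>\epsilon\}\text{-excursions}}\nabla S\cdot dW+A^\epsilon_t$. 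Here $A^\epsilon$ collects the increments over the intervals $[\tau_k,\sigma_{k+1}]$, during which $|X|\le2\epsilon$ and hence $\mathbf S$ is small.

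\textbf{Step 3: the $L^2$ bound (main obstacle).} The key estimate is
\[
\mathbb E_x^{T,\gamma}\int_0^T|\nabla S_{T-s}^\gamma(X_s)|^2\,ds<\infty.
\]
By~\eqref{GradS} we have $|\nabla S|^2=S^2|b^\gamma-\widehat b^\gamma|^2$. Near the origin $|b^\gamma|\sim |x|^{-1}$ in $d=3$ and $\sim(|x|\log|x|^{-1})^{-1}$ in $d=2$. The factor $S^2\lesssim \psi_\gamma^{-2}$, which is of order $|x|^2$ in $d=3$ and $(\log|x|^{-1})^{-2}$ in $d=2$, compensates for this. The resulting integrand is bounded near $0$ in $d=3$ and $\lesssim |x|^{-2}(\log)^{-4}$ in $d=2$; the latter is integrable against $q_s^\gamma(x,y)\,dy$, which behaves like $(\log)^{2}$ in $d=2$.

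Near $s=T$, the difference $b^\gamma-\widehat b_{T-s}^\gamma$ blows up only near the origin. I would control it through the explicit incomplete-Bessel ratios in~\eqref{RegularizedDrfitfunction}, checking integrability of $\int_0^T\int q_s(x,y)|\nabla S_{T-s}(y)|^2\,dy\,ds$ directly. I expect this careful asymptotic bookkeeping, especially in $d=3$ where $q\asymp|y|^{-2}$, to be the hardest part.

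\textbf{Step 4: passage to the limit.} With the $L^2$ bound in hand, the stochastic integral $\mathbf M$ in~\eqref{MartingaleFormula} is an $L^2$-martingale. Indeed, the time spent at $0$ has Lebesgue measure zero since $X_s$ has a density, so the indicator truncation converges in $L^2$ as $\epsilon\downarrow0$. Hence $A^\epsilon\to\mathbf A:=\mathbf S-\mathbf M$ in $L^2$.

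Each $A^\epsilon$ changes only on $\{|X|\le2\epsilon\}$, so the limit $\mathbf A$ is constant on every excursion interval away from $0$. To show $\mathbf A$ is increasing, I would use that each increment of $A^\epsilon$ over $[\tau_k,\sigma_{k+1}]$ equals $\mathbf S_{\sigma_{k+1}}-\mathbf S_{\tau_k}\ge -\sup_{|y|\le\epsilon}S(y)$, a quantity tending to $0$. An extra step is needed here, since the number of crossings could grow. Alternatively, I would argue that $\mathbf A$ is a continuous process of finite variation, being constant off the closed set $\{s:X_s=0\}$, and identify its sign by noting that $\mathbf S\ge0$ and $\mathbf S=0$ on that set, so Tanaka/Skorokhod-type reasoning forces $d\mathbf A\ge0$. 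Uniqueness of the Doob--Meyer decomposition then identifies $\mathbf M$ and $\mathbf A$.
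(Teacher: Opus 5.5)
\textbf{Gap: Step~4, showing that $\mathbf A$ is nondecreasing.} This step carries more weight than it may seem. You never prove the submartingale property on its own, so that claim, and the claim that $\mathbf A$ is the increasing part, both rest on it. With your alternating levels $\epsilon$ and $2\epsilon$, each completed piece of $A^\epsilon$ is $\mathbf S_{\sigma_{k+1}}-\mathbf S_{\tau_k}$, and you only bound it below by $-\sup_{|y|\le\epsilon}S$. The number of completed pieces before time $t$ is unbounded as $\epsilon\downarrow0$, so these bounds do not sum to anything small.

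Your fallback does not work as stated, for two reasons. First, a continuous process that is constant on every interval of the complement of a closed null set need not have finite variation: compose the Cantor function $c$ with $u\mapsto u\sin(1/u)$. Second, even granting finite variation, the facts $\mathbf S\ge0$, $\mathbf S=0$ on the zero set, and $\mathbf A$ constant off it do not fix the sign of $d\mathbf A$. Take $\mathbf A=-c$, any continuous $\mathbf S\ge0$ vanishing exactly on the Cantor set, and $\mathbf M:=\mathbf S+c$. The sign must come from the martingale property of $\mathbf M$ together with a semimartingale structure for $\mathbf S$, and your sketch establishes neither.

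\textbf{How the paper closes it.} The paper chooses the stopping times differently. It takes $\varrho_n^{\downarrow,\varepsilon}$ to be the first time after $\varrho_{n-1}^{\uparrow,\varepsilon}$ at which $X$ hits $0$ \emph{exactly}, and $\varrho_n^{\uparrow,\varepsilon}$ the next time $|X|=\varepsilon$. It\^o's formula on $[\varrho_{n-1}^{\uparrow,\varepsilon},\varrho_n^{\downarrow,\varepsilon})$ applies, letting the right endpoint increase to the hitting time, which your $L^2$ bound and the continuity of $\mathbf S$ permit. It gives $\mathbf S=\mathbf M^{\varepsilon}+\mathbf A^{\varepsilon}$ with $\mathbf A_t^{\varepsilon}=\sum_n\mathbf S_{t\wedge\varrho_n^{\uparrow,\varepsilon}}\mathbf 1_{\{\varrho_n^{\downarrow,\varepsilon}\le t\}}$, because $\mathbf S_{\varrho_n^{\downarrow,\varepsilon}}=S^\gamma_{T-\varrho_n^{\downarrow,\varepsilon}}(0)=0$. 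Every completed term is nonnegative and frozen, and only the single active term can decrease. Hence $\mathbf A_t^{\varepsilon}\ge\mathbf A_s^{\varepsilon}-\sup_{r\in[\delta,T],\,|y|\le\varepsilon}S_r^\gamma(y)$ for $s\le t\le T-\delta$, regardless of the number of crossings. The uniform $L^2$ convergence $\mathbf M^\varepsilon\to\mathbf M$ then gives $\mathbf A$ nondecreasing.

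\textbf{Alternative rescue of your scheme.} You could keep your levels and use monotonicity instead. $S_r^\gamma(y)$ is the probability that a variable with density proportional to $u^{-\nu-1}e^{-u-\lambda_\gamma|y|^2/(2u)}$, $\nu=d/2-1$, exceeds $\lambda_\gamma r$. So it is nonincreasing in $r$ and, by the monotone likelihood ratio in $|y|$, nondecreasing in $|y|$. This makes each completed increment $\mathbf S_{\sigma_{k+1}}-\mathbf S_{\tau_k}\ge0$.

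\textbf{Add the direct submartingale argument.} The paper proves the submartingale property independently, using the Markov property, $p^\gamma\ge g$ (since $h^\gamma\ge0$), and the Gaussian semigroup:
$\mathbb E_x^{T,\gamma}[\mathbf S_t\mid\mathcal F_s^{T,x}]=\frac{e^{-\lambda_\gamma(T-s)}}{\psi_\gamma(X_s)}\int p^\gamma_{t-s}(X_s,y)(g_{T-t}*\psi_\gamma)(y)\,dy\ge\frac{e^{-\lambda_\gamma(T-s)}}{\psi_\gamma(X_s)}(g_{T-s}*\psi_\gamma)(X_s)=\mathbf S_s$.
This also supplies the semimartingale structure a Tanaka-type identification would need.

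The rest of your plan matches the paper: the bound $0\le S\le1$, It\^o's formula plus~\eqref{PDEforS} away from the origin, and the $L^2$ truncation. Your Step~3 is the content of Lemma~\ref{LemSquareIntegrability}.
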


\begin{remark}
The submartingale characterization in the above proposition is
analogous to those obtained for the corresponding TMDs
in~\cite[Prop.~3.4]{CM} and~\cite[Prop.~2.2]{MianPathwise}. The main
difference is that, in the present GSD setting, the additional
regularized drift \(\widehat b_t^\gamma\) appears. For the TMDs, the
corresponding regularized drift is zero ($\widehat b_t^\gamma = 0$), so the identity
\(\nabla S_t^\gamma=-S_t^\gamma(b^\gamma-\widehat b_t^\gamma)\) reduces to $\nabla S_t^\gamma = -S_t^\gamma b^\gamma$, which is precisely the integrand appearing
in the martingale component of the corresponding submartingale
decomposition.
\end{remark}

Recall that the first time at which the process
\(X=\{X_t\}_{t\in[0,T]}\) visits the origin is
\[
\tau
:=
\inf\left\{
t\in[0,T]:X_t=0
\right\},
\]
with the convention that \(\inf\varnothing=\infty\).
Also recall the function
\(
S_t^\gamma:\mathbb R^d\setminus\{0\}\to[0,1]
\)
defined in~\eqref{Sfunction}. By~\eqref{EigenFunctionEigenValueUnified}, Lemma~\ref{LemmaGaussianGroundStateConvolution}, and part~(i) of the theorem below, the probability that \(X\) avoids the origin over the finite time interval \([0,T]\) is precisely \(S_T^\gamma(x)\). The proof of the following corollary is given in Appendix~\ref{SubsectionCorGroundStateVisitation}.

\begin{corollary}\label{CorGroundStateVisitation}
Fix \(d\in\{2,3\}\), \(T,\gamma>0\), and \(x\in\mathbb R^d\setminus\{0\}\). The following hold.
\begin{enumerate}[(i)]
\item The probability that the ground-state diffusion does not hit the
origin by time \(T\) is given by
\begin{align}
\mathbb P_x^{T,\gamma}[\tau>T]
=
\frac{
K_{d/2-1}\!\left(
\sqrt{2\lambda_\gamma}\,|x|,
\lambda_\gamma T
\right)
}{
K_{d/2-1}\!\left(
\sqrt{2\lambda_\gamma}\,|x|
\right)
}.\nonumber
\end{align}

\item Conditionally on the event \(\{\tau\le T\}\), the first hitting
time \(\tau\) has density on \((0,T]\) given by
\begin{align}
\frac{
\mathbb P_x^{T,\gamma}
\left[
\tau\in dt
\,\middle|\,
\tau\le T
\right]
}{dt}
=
\frac{\left(
\frac{|x|}{\sqrt{2\lambda_\gamma}}
\right)^{d/2-1}}{
K_{d/2-1}
\left(
\sqrt{2\lambda_\gamma}|x|
\right)
-
K_{d/2-1}
\left(
\sqrt{2\lambda_\gamma}|x|,
\lambda_\gamma T
\right)
}\,
\frac{1}{2t^{d/2}}
e^{-\lambda_\gamma t-\frac{|x|^2}{2t}}
,
\qquad
0<t\le T. \nonumber
\end{align}
\end{enumerate}
\end{corollary}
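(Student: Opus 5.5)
The plan is to identify \(\mathbb P_x^{T,\gamma}[\tau>T]\) with \(S_T^\gamma(x)=\mathbb E_x^{T,\gamma}[\mathbf S_0^{T,\gamma}]\) using Proposition~\ref{PropSubMart}, and then to evaluate \(S_T^\gamma(x)\) in closed form via Lemma~\ref{LemmaGaussianGroundStateConvolution}.

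\textbf{Step 1: reduce to an identity for \(S_T^\gamma\).} Consider the stopped process \(\mathbf S_{t\wedge\tau}^{T,\gamma}\). The increasing part \(\mathbf A^{T,\gamma}\) grows only on \(\{t:X_t=0\}\), so \(\mathbf A_{t\wedge\tau}^{T,\gamma}=\mathbf A_0^{T,\gamma}=0\) for all \(t\). Hence \(\mathbf S_{t\wedge\tau}^{T,\gamma}=\mathbf M_{t\wedge\tau}^{T,\gamma}\) is a bounded martingale, and optional stopping at \(T\) gives
\[
S_T^\gamma(x)=\mathbb E_x^{T,\gamma}\bigl[\mathbf S^{T,\gamma}_{T\wedge\tau}\bigr].
\]
We then evaluate the terminal value on the two events:
\begin{itemize}
\item on \(\{\tau>T\}\), \(\mathbf S_T^{T,\gamma}=S_0^\gamma(X_T)=1\), using \(X_T\neq0\);
\item on \(\{\tau\le T\}\), we want \(\mathbf S_\tau^{T,\gamma}=S^\gamma_{T-\tau}(0)=0\).
\end{itemize}
The second item fails at \(\tau=T\), where the value is \(S_0\); but \(\mathbb P[\tau=T]=0\) because \(X_T\) has a density. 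The second item also needs continuity, i.e. \(S_{T-t}^\gamma(X_t)\to0\) as \(X_t\to0\) with \(T-t\) bounded away from zero. This follows from \(g_t*\psi_\gamma\) being bounded near the origin, which holds because \(\psi_\gamma\) is locally integrable, while \(\psi_\gamma(x)\to\infty\). It is also implicit in the continuity asserted in Proposition~\ref{PropSubMart}. Together these give
\[
\mathbb P_x^{T,\gamma}[\tau>T]=S_T^\gamma(x).
\]

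\textbf{Step 2: compute \(S_T^\gamma\) and prove (i).} By Lemma~\ref{LemmaGaussianGroundStateConvolution}, applied with \eqref{EigenFunctionEigenValueUnified}, \(e^{-\lambda_\gamma T}(g_T*\psi_\gamma)\) should be the same prefactor as \(\psi_\gamma\) times the incomplete Bessel function \(K_{d/2-1}(\sqrt{2\lambda_\gamma}|x|,\lambda_\gamma T)\). Dividing by \(\psi_\gamma(x)\) gives formula (i). If the lemma is not quite in this form, we can derive it directly from the subordination identity
\[
\psi_\gamma=c\int_0^\infty e^{-\lambda_\gamma s}g_s\,ds .
\]
The semigroup property then gives
\[
g_T*\psi_\gamma=c\,e^{\lambda_\gamma T}\int_T^\infty e^{-\lambda_\gamma s}g_s\,ds,
\]
which, after substitution, is exactly the incomplete Bessel integral \eqref{DefIncompleteBessel}.

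\textbf{Step 3: the hitting-time density and (ii).} The horizon dependence needs care. The process is time-homogeneous and its finite-horizon laws are consistent, because they are restrictions of one infinite-horizon law. Therefore \(\mathbb P_x^{T,\gamma}[\tau>t]=\mathbb P_x^{t,\gamma}[\tau>t]=S_t^\gamma(x)\) for every \(t\le T\). Differentiating in \(t\) the integral representation from Step 2 (with upper limit \(\infty\) and lower limit \(\lambda_\gamma t\)) gives
\[
-\partial_t S_t^\gamma(x)=\frac{(|x|/\sqrt{2\lambda_\gamma})^{d/2-1}}{K_{d/2-1}(\sqrt{2\lambda_\gamma}|x|)}\,\frac{1}{2t^{d/2}}e^{-\lambda_\gamma t-|x|^2/(2t)},
\]
after checking the normalization of \(K_\nu(\cdot,\cdot)\). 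Dividing by \(\mathbb P[\tau\le T]=1-S_T^\gamma(x)\) and rewriting the denominator via (i) yields (ii).

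\textbf{Main obstacle.} I expect the hardest step to be the justification in Step 1 that \(\mathbf A^{T,\gamma}\) does not increase before \(\tau\), together with the boundary behaviour \(S_{T-\tau}^\gamma(X_\tau)=0\). Proposition~\ref{PropSubMart} only states that \(\mathbf A^{T,\gamma}\) is constant on excursions, so we need the additional fact that on \([0,\tau)\) the path lies inside a single excursion. That fact is immediate from \(X_0=x\neq0\). The rest of the argument is explicit Bessel-integral bookkeeping.
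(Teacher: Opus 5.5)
Your proposal is correct and follows essentially the same route as the paper. You use $\mathbf A_{t\wedge\tau}^{T,\gamma}=0$ and optional stopping to get $S_T^\gamma(x)=\mathbb E_x^{T,\gamma}[\mathbf S^{T,\gamma}_{\tau\wedge T}]$ with $\mathbf S^{T,\gamma}_{\tau\wedge T}=\mathbf 1_{\{\tau>T\}}$ a.s.\ (via $\mathbb P_x^{T,\gamma}[X_T=0]=0$), evaluate $S_T^\gamma$ through Lemma~\ref{LemmaGaussianGroundStateConvolution}, and obtain (ii) from horizon consistency $\mathbb P_x^{T,\gamma}[\tau>t]=S_t^\gamma(x)$ and differentiation of the incomplete Bessel integral; the only superfluous part is your continuity discussion in Step~1, since $X_\tau=0$ exactly and $S_r^\gamma(0):=0$ for $r>0$ by definition, so no limiting argument is needed there.
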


\begin{remark}
Recall that a generalized inverse Gaussian random variable
\(\xi\sim\operatorname{GIG}(\nu;a,b)\), where
\(\nu\in\mathbb R\) and \(a,b>0\), has density
\[
\frac{\mathbb P[\xi\in dt]}{dt}
=
\frac{1}{2K_\nu(ab)}
\left(\frac{b}{a}\right)^\nu
t^{\nu-1}
e^{-\frac12\left(\frac{a^2}{t}+b^2t\right)},
\qquad
t>0.
\]
The formulas in Corollary~\ref{CorGroundStateVisitation} show that the
first hitting time of the origin has the generalized inverse Gaussian
distribution
\[
\tau
\sim
\operatorname{GIG}\left(
1-\frac d2;
|x|,
\sqrt{2\lambda_\gamma}
\right).
\]
Indeed, part~(i) gives the tail probability of this distribution at \(T\),
while part~(ii) gives its density restricted to \((0,T]\) and normalized
by the probability of \(\{\tau\le T\}\). In particular, when \(d=2\),
\(\lambda_\gamma=\gamma\), and hence
\(\tau\sim\operatorname{GIG}\bigl(0;|x|,\sqrt{2\gamma}\bigr)\).
This two-dimensional hitting-time distribution is already known; see
\cite[Eq.~(7.5)]{Chen2} and~\cite[p.~884]{DonatiMartinYor}. The present
argument provides an alternative derivation through the submartingale
characterization established in Proposition~\ref{PropSubMart}, while
treating the cases \(d=2\) and \(d=3\) within a unified framework.
\end{remark}

\subsubsection{Conditioned dynamics}

The proof of the following lemma is given in Section~\ref{SubsectionLemSubMART}.

\begin{lemma}\label{LemGroundStateVisitation}
Fix \(d\in\{2,3\}\), \(T,\gamma>0\), and \(x\in\mathbb R^d\setminus\{0\}\). The following hold.
\begin{enumerate}[(i)]

\item
Under the conditional law \(\widehat{\mathbb P}_{x}^{T,\gamma} := \mathbb P_x^{T,\gamma} [\,\cdot\,|\,\tau>T]\), the coordinate process
\(X=\{X_t\}_{t\in[0,T]}\) satisfies
\begin{align}
X_t
=
x
+
\int_0^t
\widehat b_{T-s}^{\gamma}(X_s)\,ds
+
\widehat M_t^{T,\gamma},
\qquad
t\in[0,T],\nonumber
\end{align}
where
\(\{\widehat M_t^{T,\gamma}\}_{t\in[0,T]}\) is a continuous
\(d\)-dimensional martingale with respect to
\(\{\mathcal F_t^{T,x}\}_{t\in[0,T]}\).

\item Let \(\rho\) be an
\(\{\mathcal F_t^{T,x}\}_{t\in[0,T]}\)-stopping time taking values in
\([0,T]\) such that \(\mathbb P_x^{T,\gamma}[\rho<\tau]>0\). Then the stopped coordinate process \(\{X_{t\wedge\rho}\}_{t\in[0,T]}\) has the same law under $\widehat{\mathbb P}_{x}^{T,\gamma,\rho} := \mathbb P_x^{T,\gamma}
\bigl[\,\cdot\,\bigm|\rho<\tau\bigr]$ as it does under the path measure
\[
\frac{
\bigl(S_{T-\rho}^{\gamma}(X_\rho)\bigr)^{-1}
}{
\widehat{\mathbb E}_{x}^{T,\gamma}
\left[
\bigl(S_{T-\rho}^{\gamma}(X_\rho)\bigr)^{-1}
\right]
}
\,\widehat{\mathbb P}_{x}^{T,\gamma}.
\]
\end{enumerate}
\end{lemma}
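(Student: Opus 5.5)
The approach is to treat both parts as consequences of a Doob $h$-transform built from the submartingale $\mathbf S^{T,\gamma}$ of Proposition~\ref{PropSubMart}, combined with the identity $\mathbb P_x^{T,\gamma}[\tau>T]=S_T^\gamma(x)$ from Corollary~\ref{CorGroundStateVisitation}. First, identify the martingale $\mathbb P_x^{T,\gamma}[\tau>T\mid\mathcal F_t^{T,x}]$. By the Markov property (Lemma~\ref{LemmaCP}) and Corollary~\ref{CorGroundStateVisitation}(i) applied with horizon $T-t$ and starting point $X_t$, this martingale equals $\mathbf 1_{\{\tau>t\}}S_{T-t}^\gamma(X_t)=\mathbf S_{t\wedge\tau}^{T,\gamma}$ for $t<T$. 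On $[0,\tau]$ the increasing part $\mathbf A^{T,\gamma}$ is constant, since it grows only at visits to the origin. Hence
\[
\mathbf S_{t\wedge\tau}^{T,\gamma}=S_T^\gamma(x)+\int_0^{t\wedge\tau}\nabla S_{T-s}^\gamma(X_s)\cdot dW_s^{T,\gamma}
\]
by \eqref{MartingaleFormula}. Consequently $\widehat{\mathbb P}_x^{T,\gamma}$ restricted to $\mathcal F_t^{T,x}$ has density $Z_t:=\mathbf S_{t\wedge\tau}^{T,\gamma}/S_T^\gamma(x)$ with respect to $\mathbb P_x^{T,\gamma}$.

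For part (i), apply Girsanov's theorem (in its general form for continuous semimartingales, e.g.\ Lenglart/Van Schuppen--Wong) to the local martingale $W^{T,\gamma}$ under the change of measure with density process $Z$. Note that $\widehat{\mathbb P}_x^{T,\gamma}\ll\mathbb P_x^{T,\gamma}$, with $Z>0$ $\widehat{\mathbb P}$-a.s.\ because $\tau>T$. The process
\[
\widehat M_t:=W_t^{T,\gamma}-\int_0^t \frac{d\langle Z,W\rangle_s}{Z_s}=W_t^{T,\gamma}-\int_0^t\nabla\log S_{T-s}^\gamma(X_s)\,ds
\]
is then a continuous $\widehat{\mathbb P}_x^{T,\gamma}$-local martingale. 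By \eqref{GradS}, $\nabla\log S_{T-s}^\gamma=-(b^\gamma-\widehat b_{T-s}^\gamma)$. Substituting into \eqref{CanonicalGSDSDE} (valid on $[0,T]$ under $\widehat{\mathbb P}$ since $X$ never reaches $0$; for $d=3$ use Lemma~\ref{lemmaLocalizedSDEAwayOrigin}(ii) on $[0,\tau_n]$ with $\tau_n$ the exit time of $\{|y|>1/n\}$ and let $n\to\infty$), the singular drift $b^\gamma$ cancels and we get $X_t=x+\int_0^t\widehat b_{T-s}^\gamma(X_s)\,ds+\widehat M_t$. The quadratic variation of $\widehat M$ is $t\,I$, so $\widehat M$ is actually a Brownian motion and in particular a true martingale. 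One remaining point is that $\int_0^T|\widehat b_{T-s}^\gamma(X_s)|\,ds<\infty$. Near $s=T$, $\widehat b_{T-s}^\gamma$ approaches $b^\gamma$, but $X$ stays away from $0$ on $[0,T]$ under $\widehat{\mathbb P}$. Elsewhere $\widehat b_t^\gamma$ is bounded on compacts for $t>0$, by the regularity of $g_t*\psi_\gamma$ and Lemma~\ref{LemmaGaussianGroundStateConvolution}.

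For part (ii), for bounded $\mathcal F_\rho$-measurable $F$ (the stopped path being $\mathcal F_\rho$-measurable), optional stopping for the bounded martingale $\mathbf S_{\cdot\wedge\tau}^{T,\gamma}$ gives
\[
\widehat{\mathbb E}_x^{T,\gamma}\bigl[F\,(S_{T-\rho}^\gamma(X_\rho))^{-1}\bigr]=\frac{\mathbb E_x^{T,\gamma}\bigl[F\,\mathbf 1_{\{\rho<\tau\}}\bigr]}{S_T^\gamma(x)}.
\]
This holds because $\mathbb P[\tau>T\mid\mathcal F_\rho]=\mathbf 1_{\{\rho<\tau\}}S_{T-\rho}^\gamma(X_\rho)$, with the usual care on $\{\rho=T\}$, where $S_0=1$. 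Taking $F=1$ identifies the normalizing constant as $\mathbb P[\rho<\tau]/S_T^\gamma(x)$. Dividing then yields exactly the claimed equality of laws.

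The main obstacle is the first step: rigorously identifying the conditional probability with $\mathbf S^{T,\gamma}_{t\wedge\tau}$ and showing $\mathbf A^{T,\gamma}$ does not move before $\tau$. This requires that $\mathbf A$ charge only $\{t:X_t=0\}$, which Proposition~\ref{PropSubMart} supplies. It also requires continuity of $S_{T-t}^\gamma(X_t)$ at $t=T$ (where $S_0=1$ off the origin), which I would handle by taking limits $t\uparrow T$ using boundedness.
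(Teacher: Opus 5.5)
Your proof is correct. Part (ii), and the identification of the density process $Z_t=\mathbf S^{T,\gamma}_{t\wedge\tau}/S^\gamma_T(x)$, are essentially the paper's argument: optional sampling of the bounded martingale $\mathbf S^{T,\gamma}_{\cdot\wedge\tau}$, the explicit density $\mathbf 1_{\{\rho<\tau\}}S^\gamma_{T-\rho}(X_\rho)/S^\gamma_T(x)$ on $\mathcal F^{T,x}_\rho$, and normalization with $F\equiv 1$.

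Part (i), however, takes a genuinely different route. The paper proves the martingale property by hand, in four steps:
\begin{enumerate}[(a)]
\item It defines $\widehat M$ as $X-x-\int_0^\cdot\widehat b^\gamma_{T-s}(X_s)\,ds$ and uses Lemma~\ref{LemSquareIntegrability} to get $\widehat{\mathbb E}_x^{T,\gamma}\int_0^T|\widehat b^\gamma_{T-s}(X_s)|^2\,ds<\infty$.
\item It shows via It\^o's product rule that $Z\widehat M$ is a $\mathbb P_x^{T,\gamma}$-local martingale.
\item It upgrades $Z\widehat M$ to a true martingale through an $L^2$ bound. That bound needs $\sup_t\mathbb E_x^{T,\gamma}|X_t|^2<\infty$, which comes from an external estimate for $d=2$ and from Lemma~\ref{LemmaL2IncrementY} for $d=3$.
\item It concludes with Bayes' rule.
\end{enumerate}

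You instead invoke the Girsanov--Lenglart theorem for a continuous density process of a measure that is only absolutely continuous. This gives directly that $W^{T,\gamma}-\int_0^\cdot\nabla\log S^\gamma_{T-s}(X_s)\,ds$ is a continuous $\widehat{\mathbb P}_x^{T,\gamma}$-local martingale with bracket $tI$. L\'evy's characterization then makes it a Brownian motion.

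What your route buys:
\begin{enumerate}[(a)]
\item It is shorter and needs none of the moment estimates.
\item It proves more: the martingale part is a Brownian motion, so under the conditional law $X$ is a weak solution of $dX_t=\widehat b^\gamma_{T-t}(X_t)\,dt+d\widehat W_t$.
\item Almost-sure finiteness of $\int_0^T|\widehat b^\gamma_{T-s}(X_s)|\,ds$ comes for free from the Girsanov decomposition. It also follows from Lemma~\ref{CorRegularizedDriftComparison} together with the fact that $X$ stays bounded away from the origin. Your separate heuristic that $\widehat b^\gamma_{T-s}\to b^\gamma$ near $s=T$ is therefore unnecessary.
\end{enumerate}

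What the paper's route buys: it is more self-contained, and it gives the quantitative bound $\widehat{\mathbb E}_x^{T,\gamma}\int_0^T|\widehat b^\gamma_{T-s}(X_s)|^2\,ds<\infty$. So the drift integral lies in $L^2(\widehat{\mathbb P}_x^{T,\gamma})$, which your argument yields only pathwise.
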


\begin{remark}
Part~(i) of Lemma~\ref{LemGroundStateVisitation} highlights an important distinction between the GSDs and the corresponding TMDs under conditioning to avoid the origin. For the TMDs, conditioning on \(\{\tau>T\}\) removes the drift completely, so that the conditioned coordinate process is a standard \(d\)-dimensional Brownian motion started from \(x\); see~\cite[Thm.~2.4(ii)]{CM} for \(d=2\) and~\cite[Thm.~2.4(ii)]{MianPathwise} for \(d=3\). In contrast, for the GSDs the conditioning does not eliminate the drift entirely. Instead, part~(i) shows that under \(\widehat{\mathbb P}_x^{T,\gamma}\), the coordinate process retains the time-dependent regularized drift
\(\widehat b_{T-t}^{\gamma}=\nabla\log(g_{T-t}*\psi_\gamma)\), whereas the TMD has the constant initial profile \(1\), so that \(\nabla\log(g_{T-t}*1)=0\).
\end{remark}

\subsection{Organization of the paper}

The remainder of the paper is organized as follows.
\begin{itemize}

\item[--]
Section~\ref{SecRegularizedDrift} studies the regularized drift
\(\widehat b_t^\gamma\) and establishes the estimates needed later in the paper, treating the cases \(d=2\) and \(d=3\) separately in Sections~\ref{SecRegularizedDrift2d} and~\ref{SecRegularizedDrift3d}. We also record the relevant Bessel-function identities and estimates, together with the Gaussian convolution formula for the ground-state eigenfunction in Lemma~\ref{LemmaGaussianGroundStateConvolution}.

\item[--]
Section~\ref{SqureIntegrability} establishes the square-integrability
estimates required for the martingale \(\mathbf M^{T,\gamma}\) given by~\eqref{MartingaleFormula}, again
treating the cases \(d=2\) and \(d=3\) separately in Sections~\ref{SqureIntegrability2d} and~\ref{SqureIntegrability3d}.

\item[--]
Section~\ref{SubsectionLemSubMART} proves
Lemma~\ref{LemGroundStateVisitation}, concerning the dynamics of the
GSD conditioned to avoid the origin and the corresponding stopping-time
change of measure.

\item[--]
Appendix~\ref{AppProofsLemmas} contains the proofs of
Lemmas~\ref{LemmaCP}, \ref{lemmaLocalizedSDEAwayOrigin}, and
\ref{LemmaGaussianGroundStateConvolution}.

\item[--]
In Appendix~\ref{ProofPropSubMart}, we prove Proposition~\ref{PropSubMart} concerning the submartingale characterization.

\item[--]
Finally, Section~\ref{SubsectionCorGroundStateVisitation} contains the proof of
Corollary~\ref{CorGroundStateVisitation}.

\end{itemize}

\section{The regularized drift \texorpdfstring{$\widehat b_t^\gamma(x)$}{Lg} }  \label{SecRegularizedDrift}

The proof of the following lemma for \(d=2\) is given at the end of Section~\ref{SecRegularizedDrift2d}, while for \(d=3\) it follows directly from Lemma~\ref{LemmaHatDriftBound}, since in this case
\(\lvert b^\gamma(x)\rvert=\gamma+\lvert x\rvert^{-1}\).

\begin{lemma}\label{CorRegularizedDriftComparison}
Fix \(d\in\{2,3\}\) and \(T,\gamma>0\). There exists a constant
\(C=C(\gamma,T)>0\) such that, for all \(0<t\le T\) and all
\(x\in\mathbb R^d\setminus\{0\}\),
\begin{align} 
\bigl|\widehat b_t^\gamma(x)\bigr|
\le
C\,\bigl|b^\gamma(x)\bigr|. \nonumber
\end{align} 
\end{lemma}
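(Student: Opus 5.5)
The plan is to avoid Bessel-function asymptotics and prove the bound for $d=2$ and $d=3$ together, with the explicit constant $C=1$. The idea is to write $\psi_\gamma$ as a Laplace transform in time of the free heat kernel. Both $\bigl|b^\gamma\bigr|$ and $\bigl|\widehat b_t^\gamma\bigr|$ then become averages of the same decreasing function against the same weight, taken over $[0,\infty)$ and over $[t,\infty)$ respectively, and a monotonicity (Chebyshev-type) inequality compares the two averages.

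\emph{Step 1: Laplace representation of the ground state.} I will use the standard resolvent identity
\[
\psi_\gamma(x)=c_{d,\gamma}\int_0^\infty e^{-\lambda_\gamma s}\,g_s(x)\,ds,\qquad x\neq0,
\]
for some constant $c_{d,\gamma}>0$. This identity is the classical formula $\int_0^\infty s^{-d/2}e^{-\lambda s-|x|^2/(2s)}\,ds=2\bigl(|x|^2/(2\lambda)\bigr)^{(2-d)/4}K_{d/2-1}\bigl(\sqrt{2\lambda}|x|\bigr)$, and it agrees with~\eqref{EigenFunctionEigenValueUnified}. It is essentially the content of Lemma~\ref{LemmaGaussianGroundStateConvolution}, which I may assume. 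By the semigroup property $g_t*g_s=g_{t+s}$ and Fubini (all integrands are nonnegative),
\[
(g_t*\psi_\gamma)(x)=c_{d,\gamma}\,e^{\lambda_\gamma t}\int_t^\infty e^{-\lambda_\gamma u}g_u(x)\,du .
\]

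\emph{Step 2: gradients.} Since $\nabla g_u(x)=-\tfrac{x}{u}g_u(x)$, I will differentiate under the integral sign. This is justified by dominated convergence on compact subsets of $\mathbb R^d\setminus\{0\}$: the factor $e^{-|x|^2/(2u)}$ kills the singularity at $u\to0$, and the factor $e^{-\lambda_\gamma u}$ controls $u\to\infty$. Setting $w_x(u):=e^{-\lambda_\gamma u}u^{-d/2}e^{-|x|^2/(2u)}>0$, this gives, for every $t\ge0$,
\[
\bigl|\nabla\log(g_t*\psi_\gamma)(x)\bigr|
=|x|\,\frac{\int_t^\infty u^{-1}w_x(u)\,du}{\int_t^\infty w_x(u)\,du}.
\]
At $t=0$ this is $|b^\gamma(x)|$, and for $t>0$ it is $|\widehat b_t^\gamma(x)|$. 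So the ratio $|\widehat b_t^\gamma(x)|/|b^\gamma(x)|$ is exactly $\mathbb E[U^{-1}\mid U\ge t]/\mathbb E[U^{-1}]$, where $U$ is a random variable with density proportional to $w_x$ on $(0,\infty)$.

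\emph{Step 3: monotonicity.} The functions $u\mapsto u^{-1}$ and $u\mapsto\mathbf 1_{\{u\ge t\}}$ are oppositely monotone. Chebyshev's association inequality therefore gives $\mathbb E[U^{-1}\mathbf 1_{\{U\ge t\}}]\le\mathbb E[U^{-1}]\,\mathbb P[U\ge t]$, which is the same as $\mathbb E[U^{-1}\mid U\ge t]\le\mathbb E[U^{-1}]$. Hence $|\widehat b_t^\gamma(x)|\le|b^\gamma(x)|$ for all $t>0$ and $x\neq0$, and the lemma holds with $C=1$, uniformly in $T$.

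The only step needing care is Step 1: confirming that the normalizations of the incomplete Bessel functions in~\eqref{RegularizedDrfitfunction} match the tail integral $\int_t^\infty$ in both dimensions, so that $\widehat b_t^\gamma$ really is the ratio above. Once the identification $\widehat b_t^\gamma=\nabla\log(g_t*\psi_\gamma)$ from~\eqref{RegularizedDrfitfunction} is taken as the definition, the argument does not depend on those normalizations. For $d=3$, the resulting inequality is consistent with Lemma~\ref{LemmaHatDriftBound} and with $|b^\gamma(x)|=\gamma+|x|^{-1}$.
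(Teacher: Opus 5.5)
Your proof is correct, and it takes a genuinely different and much shorter route than the paper's.

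\paragraph{The paper's route.} The paper never compares the two drifts directly. For $d=2$ it first proves the three-region upper bound of Lemma~\ref{LemmaRegularizedDriftBound2d} for $|\widehat b_t^\gamma|$. It then matches that bound region by region against the two-sided bound \eqref{BoundGroundStateDrift2d} for $|b^\gamma|$. This step relies on imported estimates, Bessel asymptotics, and a separate logarithmic comparison on $\{0<|x|<\min\{\sqrt{2t},1/2\}\}$. For $d=3$ it integrates by parts and splits the convolution integral to obtain Lemma~\ref{LemmaHatDriftBound}, then uses $1/\sqrt t\le 1/|x|$ on $\{|x|\le\sqrt t\}$. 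Both cases produce a constant $C(\gamma,T)$.

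\paragraph{Your route.} Your subordination formula is exactly the computation in the proof of Lemma~\ref{LemmaGaussianGroundStateConvolution}, and for $d=2$ your ratio coincides with \eqref{RegularizedDriftIntegralRatio}. Once both drifts are written as $|x|$ times the mean of $U^{-1}$ under the same weight $w_x$, unconditioned versus conditioned on $\{U\ge t\}$, Chebyshev's inequality gives $C=1$. This holds uniformly in $d$, $\gamma$, $T$ and $t>0$. The constant is sharp, since $|\widehat b_t^\gamma(x)|\to|b^\gamma(x)|$ as $t\downarrow0$ by monotone convergence. Applying the same argument to the law of $U$ given $\{U\ge t\}$ also shows that $t\mapsto|\widehat b_t^\gamma(x)|$ is nonincreasing. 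Case~3 in the proof of Lemma~\ref{LemmaRegularizedDriftBound2d}, which bounds $N_t/D_t$ by $N_0/(C_5D_0)$, is a lossy version of this observation.

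\paragraph{What your route does not give.} It does not supply the extra near-origin information in Lemmas~\ref{LemmaRegularizedDriftBound2d} and~\ref{LemmaHatDriftBound}. These lemmas bound $|\widehat b_t^\gamma(x)|$ by $C|x|/(t(1+\log^+\frac{1}{\gamma t}))$ for $d=2$, respectively $C(\gamma+1/\sqrt t)$ for $d=3$, when $|x|\lesssim\sqrt t$. The paper needs this separately to control $\widehat b_{T-s}^\gamma$ without the factor $S_{T-s}^\gamma$ in Lemma~\ref{LemSquareIntegrability}. There the comparison with $b^\gamma$ would not suffice, because $y\mapsto q_s^\gamma(x,y)|b^\gamma(y)|^2$ is not integrable near the origin. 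For the comparison lemma itself, your argument is complete.
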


We use the following integral representations. For $\nu \in \R$, $z>0$, and $y\ge0$, the incomplete modified Bessel function of the second kind of order $\nu$ is defined by
\begin{align} \label{DefIncompleteBessel}
K_\nu(z,y)
:=
\frac{1}{2}
\left(\frac{z}{2}\right)^\nu
\int_y^\infty
u^{-\nu-1}
e^{-u-\frac{z^2}{4u}}
\,du,
\end{align}
while the modified Bessel function of the second kind of order $\nu$ is obtained by taking $y=0$, namely,
\begin{align} \label{DefCompleteBessel}
K_\nu(z)
:=
K_\nu(z,0)
=
\frac{1}{2}
\left(\frac{z}{2}\right)^\nu
\int_0^\infty
u^{-\nu-1}
e^{-u-\frac{z^2}{4u}}
\,du.
\end{align}
Differentiating~\eqref{DefIncompleteBessel} with respect to $z$ yields
\begin{align}
\frac{\partial}{\partial z} K_\nu(z,y)
=
\frac{\nu}{z} K_\nu(z,y) - K_{\nu+1}(z,y). \nonumber
\end{align}
In particular, \(\frac{\partial}{\partial z}K_0(z,y)=-K_1(z,y)\),
\(K_\nu'(z)=\frac{\nu}{z}K_\nu(z)-K_{\nu+1}(z)\), and \(K_0'(z)=-K_1(z)\). The functions \(K_\nu(r)\) satisfy the following asymptotic behaviors~\cite[Eqs.~(5.16.4)\textendash(5.16.5), p.~136]{Lebedev}:
\begin{align}
K_\nu(r)
&\sim
\begin{cases}
\displaystyle
\log\frac1r,
&
r\downarrow0,\quad \nu=0,
\\[2mm]
\displaystyle
\frac1r,
&
r\downarrow0,\quad \nu=1,
\end{cases}
\label{AsympBesselOrigin}
\\
K_\nu(r)
&\sim
\sqrt{\frac{\pi}{2r}}\,
e^{-r},
\qquad
r\to\infty,\quad \nu\ge0.
\label{AsympBesselInfinity}
\end{align}
The proof of following lemma is in Appendix~\ref{ProofLemmaGaussianGroundStateConvolution}

\begin{lemma}\label{LemmaGaussianGroundStateConvolution}
For every \(t>0\) and \(x\in\mathbb R^d\setminus\{0\}\), with
\(d\in\{2,3\}\), the Gaussian convolution of the ground-state
eigenfunction is given by
\begin{align}
(g_t*\psi_\gamma)(x)
=
\frac{
\sqrt{2\lambda_\gamma}
}{
\pi^{(d-1)/2}|x|^{d/2-1}
}
e^{\lambda_\gamma t}
K_{d/2-1}\!\left(
\sqrt{2\lambda_\gamma}\,|x|,
\lambda_\gamma t
\right). \nonumber
\end{align}
\end{lemma}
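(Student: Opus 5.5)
The plan is to compute the convolution using the heat-kernel (subordination) representation of $\psi_\gamma$ and the semigroup property of the Gaussian kernel.

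\textbf{Step 1: subordination formula for $\psi_\gamma$.} I first write the ground state as a time integral of free heat kernels:
\begin{align}
\psi_\gamma(y)=c_d\int_0^\infty e^{-\lambda_\gamma s}g_s(y)\,ds,\qquad y\neq0. \nonumber
\end{align}
This is the $d$-dimensional resolvent kernel of $\frac12\Delta$ at spectral parameter $\lambda_\gamma$, up to a constant. I identify the constant by substituting $u=\lambda_\gamma s$ in~\eqref{DefCompleteBessel}. With $z=\sqrt{2\lambda_\gamma}|y|$ we have $z^2/(4u)=|y|^2/(2s)$, and a short computation gives
\begin{align}
\int_0^\infty e^{-\lambda_\gamma s}(2\pi s)^{-d/2}e^{-|y|^2/(2s)}\,ds=\frac{2}{(2\pi)^{d/2}}\left(\frac{\sqrt{2\lambda_\gamma}}{|y|}\right)^{d/2-1}K_{d/2-1}\bigl(\sqrt{2\lambda_\gamma}|y|\bigr). \nonumber
\end{align}
Comparing with~\eqref{EigenFunctionEigenValueUnified} then fixes $c_d$ explicitly. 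For $d=3$ this reduces, via $K_{1/2}(r)=\sqrt{\pi/(2r)}e^{-r}$, to the familiar Yukawa kernel.

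\textbf{Step 2: convolve and use the semigroup property.} Both integrands are nonnegative, so Tonelli's theorem justifies exchanging the integrals. Using $g_t*g_s=g_{t+s}$, I get
\begin{align}
(g_t*\psi_\gamma)(x)=c_d\int_0^\infty e^{-\lambda_\gamma s}g_{t+s}(x)\,ds=c_d\,e^{\lambda_\gamma t}\int_t^\infty e^{-\lambda_\gamma r}g_r(x)\,dr. \nonumber
\end{align}

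\textbf{Step 3: identify the incomplete Bessel function.} I apply the same substitution $u=\lambda_\gamma r$ as in Step~1, so the lower limit becomes $u\ge\lambda_\gamma t$. Matching against~\eqref{DefIncompleteBessel}, the remaining integral equals exactly the expression from Step~1 with $K_{d/2-1}(z)$ replaced by $K_{d/2-1}(z,\lambda_\gamma t)$. Hence
\begin{align}
(g_t*\psi_\gamma)(x)=\psi_\gamma\text{-prefactor}\times e^{\lambda_\gamma t}K_{d/2-1}\bigl(\sqrt{2\lambda_\gamma}|x|,\lambda_\gamma t\bigr), \nonumber
\end{align}
which is the claimed formula.

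\textbf{Main obstacle.} The only delicate point is the bookkeeping of constants. The power of $z/2$ in~\eqref{DefIncompleteBessel} and the exponent $u^{-\nu-1}$ must combine with $(2\pi s)^{-d/2}$ to give exactly the prefactor $\sqrt{2\lambda_\gamma}/(\pi^{(d-1)/2}|x|^{d/2-1})$ in~\eqref{EigenFunctionEigenValueUnified}. I would verify this once for general $d$ with $\nu=d/2-1$, and then sanity-check it in $d=3$ against $\psi_\gamma=\sqrt{\gamma/(2\pi)}e^{-\gamma|x|}/|x|$.
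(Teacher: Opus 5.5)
Your proposal is correct and follows essentially the same route as the paper. Both arguments write $\psi_\gamma$ as a Laplace-type integral of Gaussian kernels (via the integral definition of $K_{d/2-1}$ with $u=\lambda_\gamma a$), exchange the integrals and use $g_t*g_s=g_{t+s}$, shift $a\mapsto a-t$, and re-substitute $u=\lambda_\gamma a$ to obtain $K_{d/2-1}(\sqrt{2\lambda_\gamma}|x|,\lambda_\gamma t)$; your Step~1 constant is correct, so the constant bookkeeping you flag as the main obstacle goes through.
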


\subsection{The regularized drift: \texorpdfstring{$d=2$}{Lg} }  \label{SecRegularizedDrift2d}

We prove Lemma~\ref{CorRegularizedDriftComparison} for \(d=2\) at the end of this section. Fix \(\gamma>0\). Recall from~\eqref{Drfitfunction} that the
(time-independent) drift \(b^\gamma:\mathbb R^2\setminus\{0\}\to\mathbb R^2\) is given by \(b^\gamma(x)=\nabla\log K_0\!\bigl(\sqrt{2\gamma}\,|x|\bigr)\) for
\(x \in \R^2 \setminus \{0\}\). There exist constants \(c=c(\gamma)>0\) and
\(C=C(\gamma)>0\) such that, for all
\(x\in\mathbb R^2\setminus\{0\}\),
\begin{align}\label{BoundGroundStateDrift2d}
\frac{c}{
|x|\left( 1+\log \dfrac1{|x|} \right)
}
\mathbf 1_{\{|x|\le1/2\}}
+
c\mathbf 1_{\{|x|>1/2\}}
\le
|b^\gamma(x)|
\le
\frac{C}{
|x|\left( 1+\log\dfrac1{|x|} \right)
}
\mathbf 1_{\{|x|\le1/2\}}
+
C\mathbf 1_{\{|x|>1/2\}} .
\end{align}
Indeed, for \(\beta=2\gamma\), the drift \(\mathbf b_\beta\) defined
in~\cite[Eq.~(1.15)]{Chen2} satisfies
\(b^\gamma=\frac12\mathbf b_{2\gamma}\) on
\(\mathbb R^2\setminus\{0\}\), so the estimate on
\(0<|x|\le1/2\) follows from~\cite[Prop.~2.13\((1^\circ)\)]{Chen2}
and the comparability of \(\log |x|^{-1}\) and
\(1+\log |x|^{-1}\) on this region. For \(|x|>1/2\), the estimate follows from the positivity and
continuity of \(K_1/K_0\) on \((0,\infty)\), together with
\(K_1(r)/K_0(r)\to1\) as \(r\to\infty\), which follows
from~\eqref{AsympBesselInfinity}.

For each \(t>0\), recall from~\eqref{RegularizedDrfitfunction} that
\(\widehat b_t^\gamma:\mathbb R^2\setminus\{0\}\to\mathbb R^2\) is defined by \(\widehat b_t^\gamma:=\nabla\log(g_t*\psi_\gamma)\). The following lemma gives a bound on the regularized drift \(\widehat b_t^\gamma\).

\begin{lemma}\label{LemmaRegularizedDriftBound2d}
Let \(T, \gamma>0\). There exists a constant
\(C=C(\gamma,T)>0\) such that, for all \(0<t\le T\) and all
\(x\in\mathbb R^2\setminus\{0\}\),
\begin{align}
\bigl|\widehat b_t^\gamma(x)\bigr|
\le{}&
\frac{
C\,|x|
}{
t\left(
1+\log^+\dfrac1{\gamma t}
\right)
}
\mathbf 1_{\left\{
0<|x|<
\min\left\{
\sqrt{2t},\frac12
\right\}
\right\}}
+
\frac{
C
}{
|x|
\left(
1+\log\dfrac1{|x|}
\right)
}
\mathbf 1_{\left\{
\sqrt{2t}\le |x|\le\frac12
\right\}}
+
C\mathbf 1_{\left\{
|x|>\frac12
\right\}}. \nonumber
\end{align}
\end{lemma}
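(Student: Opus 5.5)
In $d=2$ we have $\lambda_\gamma=\gamma$, so by~\eqref{RegularizedDrfitfunction} and the derivative identity for the incomplete Bessel functions, $|\widehat b_t^\gamma(x)|=\sqrt{2\gamma}\,K_1(z,\gamma t)/K_0(z,\gamma t)$ with $z=\sqrt{2\gamma}|x|$. The plan is to bound this ratio directly from the integral representation~\eqref{DefIncompleteBessel}. Set $y=\gamma t$ and $z^2/4=\gamma|x|^2/2$. Then
\[
K_0(z,y)=\tfrac12\int_y^\infty u^{-1}e^{-u-z^2/(4u)}\,du,\qquad K_1(z,y)=\tfrac z4\int_y^\infty u^{-2}e^{-u-z^2/(4u)}\,du .
\]
The key observation is that $z^2/(4y)=|x|^2/(2t)$. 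So the condition $|x|<\sqrt{2t}$ says exactly that $e^{-z^2/(4u)}\ge e^{-1}$ on the whole range $u\ge y$. We split into the three regions of the statement.

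\textbf{Region $0<|x|<\min\{\sqrt{2t},1/2\}$.} Here the Gaussian factor is harmless. In the numerator we drop it and bound $K_1(z,y)\le \frac z4\int_y^\infty u^{-2}e^{-u}du\le \frac{z}{4y}$. In the denominator we use $e^{-z^2/(4u)}\ge e^{-1}$, which gives $K_0(z,y)\ge \frac{1}{2e}E_1(y)$, where $E_1(y)=\int_y^\infty u^{-1}e^{-u}du$. Since $y\le\gamma T$, we have $E_1(y)\ge c(\gamma T)\,(1+\log^+\frac1y)$: near $0$, $E_1(y)\sim\log(1/y)$, and on $[\,\cdot\,,\gamma T]$ it is bounded below by $E_1(\gamma T)>0$. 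Dividing, $|\widehat b_t^\gamma(x)|\le C\sqrt{2\gamma}\,z/(y(1+\log^+\frac1{\gamma t}))$, which is of order $|x|/(t(1+\log^+\frac1{\gamma t}))$, as claimed.

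\textbf{Regions $\sqrt{2t}\le|x|\le 1/2$ and $|x|>1/2$.} In both regions it suffices to show $K_1(z,y)/K_0(z,y)\le C\,K_1(z)/K_0(z)$, and then invoke~\eqref{BoundGroundStateDrift2d}. In fact we expect to prove this ratio comparison for every $t\le T$ and every $x$; that single inequality would also give Lemma~\ref{CorRegularizedDriftComparison} for $d=2$.

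\textbf{Main obstacle: the ratio comparison.} Write both quantities as expectations under the probability measure $\mu_y(du)\propto u^{-1}e^{-u-z^2/(4u)}\mathbf 1_{u>y}\,du$. Then $K_1(z,y)/K_0(z,y)=\frac z2\int u^{-1}\,d\mu_y$, and likewise for $y=0$. Since $u^{-1}$ is decreasing, restricting to $u>y$ can only lower its average. More precisely, $\mu_y$ is $\mu_0$ conditioned on $(y,\infty)$, and conditioning on an upper set stochastically increases $u$. Hence $\int u^{-1}d\mu_y\le\int u^{-1}d\mu_0$, which gives $K_1(z,y)/K_0(z,y)\le K_1(z)/K_0(z)$ with constant $1$.

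If this monotonicity argument holds up, the only real work is the lower bound on $E_1$ in the first region, together with checking that the first region's bound still holds (with a worse constant) when $\gamma t$ is large, which is why $C$ depends on $T$.
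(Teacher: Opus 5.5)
Your proof is correct, and the monotonicity step you flagged does hold. With $w(u)=e^{-u-z^2/(4u)}$, $N(y)=\int_y^\infty u^{-2}w(u)\,du$ and $D(y)=\int_y^\infty u^{-1}w(u)\,du$, one has $(N/D)'(y)=\frac{w(y)}{y\,D(y)^2}\bigl(N(y)-D(y)/y\bigr)\le 0$, since $u^{-2}\le y^{-1}u^{-1}$ for $u\ge y$. All integrals are finite because $z>0$.

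Your first region is the same as the paper's Case~1. For the remaining regions the paper argues differently:
\begin{itemize}
\item In Case~2 it bounds the denominator of the $s$-integral ratio from below by integrating only over $[|x|^2,1]$. It bounds the numerator from above by $2/|x|^2$. This gives $C/(|x|(1+\log\frac1{|x|}))$ directly from the integral representation, without \eqref{BoundGroundStateDrift2d}.
\item In Case~3 it shows $N_t\le N_0$ and $D_t\ge C_5D_0$. The second bound uses the large-argument asymptotics of $K_0$ for $|x|\ge R_0$ and a compactness argument on $\frac12\le|x|\le R_0$. Only then does it invoke \eqref{BoundGroundStateDrift2d}.
\end{itemize}

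Your route collapses both cases into the single inequality $|\widehat b_t^\gamma(x)|\le|b^\gamma(x)|$. It holds for every $t>0$ with constant $1$, whereas the paper's $D_t\ge C_5D_0$ gives the same comparison only with the lossy constant $1/C_5$. This also makes Lemma~\ref{CorRegularizedDriftComparison} for $d=2$ immediate, so the paper's separate case analysis there becomes unnecessary. The same weighting argument with $u^{-\nu-1}$ gives $K_{\nu+1}(z,y)/K_\nu(z,y)\le K_{\nu+1}(z)/K_\nu(z)$, so it covers $d=3$ as well.

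The price is that in the region $\sqrt{2t}\le|x|\le\frac12$ you depend on the upper bound in \eqref{BoundGroundStateDrift2d}. The paper imports that bound from the literature, or it can be obtained from \eqref{AsympBesselOrigin} and continuity. The paper's Case~2 is self-contained at that point.
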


\begin{proof}
Let \(x\in\mathbb R^2\setminus\{0\}\). Using the definition~\eqref{DefIncompleteBessel} of the incomplete modified Bessel function $K_\nu(z,y)$ in~\eqref{RegularizedDrfitfunction}, and applying the change of variables \(u=\gamma s\), we obtain
\begin{align}\label{RegularizedDriftIntegralRatio}
\bigl|\widehat b_t^\gamma(x)\bigr|
=
\sqrt{2\gamma}\,
\frac{
\displaystyle
\big(
\sqrt{2\gamma}\,|x|
\big)
\int_{\gamma t}^\infty
u^{-2}
e^{-u-\frac{\gamma|x|^2}{2u}}
\,du
}{
\displaystyle
\frac12
\int_{\gamma t}^\infty
u^{-1}
e^{-u-\frac{\gamma|x|^2}{2u}}
\,du
}
=
|x|\,
\frac{
\displaystyle
\int_t^\infty
s^{-2}
e^{-\gamma s-\frac{|x|^2}{2s}}
\,ds
}{
\displaystyle
\int_t^\infty
s^{-1}
e^{-\gamma s-\frac{|x|^2}{2s}}
\,ds
}.
\end{align}
We estimate the final quotient in~\eqref{RegularizedDriftIntegralRatio} separately in the following three regions. \vspace{.2cm}

\noindent\textit{Case 1: $0<|x|<\min\{\sqrt{2t},1/2\}$.} For every \(s\ge t\), we have $|x|^2/(2s) \le |x|^2/(2t) <1$, and hence
\begin{align}\label{SmallRegionDenominator}
\int_t^\infty
\frac1s
e^{-\gamma s-\frac{|x|^2}{2s}}\,ds
&\ge
e^{-1}
\int_t^\infty
\frac{e^{-\gamma s}}s\,ds
\ge
C(\gamma,T)
\left(
1+\log^+\frac1{\gamma t}
\right),
\end{align}
for some constant \(C=C(\gamma,T)>0\), where the last inequality
follows by considering separately the cases
\(0<\gamma t\le\frac12\) and \(\gamma t>\frac12\). In the first case,
\(t<1/\gamma\), and the desired estimate is obtained by using
\(\int_t^\infty \ge \int_t^{1/\gamma}+\int_{1/\gamma}^{2/\gamma}\),
together with the estimates
\[
\int_t^{1/\gamma}
\frac{e^{-\gamma s}}s\,ds
\ge
e^{-1}
\int_t^{1/\gamma}\frac{ds}{s}
=
e^{-1}
\log\frac1{\gamma t},
\qquad
\int_{1/\gamma}^{2/\gamma}
\frac{e^{-\gamma s}}{s}\,ds
=
\int_1^2\frac{e^{-u}}u\,du
\ge
\frac12\int_1^2e^{-u}\,du
>0,
\]
and \(\log\frac1{\gamma t}=\log^+\frac1{\gamma t}\) since
\(\gamma t\le\frac12<1\). In the second case, \(\frac12<\gamma t\le\gamma T\), and the change of
variables \(u=\gamma s\) gives
\begin{align*}
\int_t^\infty
\frac{e^{-\gamma s}}s\,ds
&\ge
\int_T^\infty
\frac{e^{-\gamma s}}s\,ds
=
\int_{\gamma T}^\infty
\frac{e^{-u}}u\,du
=
E(\gamma T)
\ge
\frac{E(\gamma T)}{1+\log2}
\left(
1+\log^+\frac1{\gamma t}
\right),
\end{align*}
where \(E(r):=\int_r^\infty e^{-u}u^{-1}\,du\), \(r>0\), denotes the exponential integral. Since \(\gamma T>0\) is fixed, \(E(\gamma T)\) is a finite strictly positive constant depending only on \(\gamma\) and \(T\). The final inequality follows from \(\log^+\frac1{\gamma t}\le\log2\), since \(\gamma t>\frac12\).

For the numerator in~\eqref{RegularizedDriftIntegralRatio}, we have
\[
\int_t^\infty
\frac1{s^2}
e^{-\gamma s-\frac{|x|^2}{2s}}\,ds
\le
\int_t^\infty\frac{ds}{s^2}
=
\frac1t.
\]
Substituting the above estimate together with~\eqref{SmallRegionDenominator} into~\eqref{RegularizedDriftIntegralRatio} yields the desired bound in the case \(0<|x|<\min\{\sqrt{2t},1/2\}\). \vspace{.2cm}

\noindent\textit{Case 2:
\(\sqrt{2t}\le |x|\le 1/2\).} In this case, we have \(t\le |x|^2\) and \(|x|^2<1\), so that
\([|x|^2,1]\subset[t,\infty)\). Moreover, for every \(s\in[|x|^2,1]\), we have \(|x|^2/(2s)\le1\) and \(\gamma s\le\gamma\). Therefore,
\begin{align}
\int_t^\infty
\frac1s
e^{-\gamma s-\frac{|x|^2}{2s}}
\,ds
\ge
\int_{|x|^2}^1
\frac1s
e^{-\gamma s-\frac{|x|^2}{2s}}
\,ds
\ge
e^{-(\gamma+1)}
\int_{|x|^2}^1\frac{ds}{s}
=
2e^{-(\gamma+1)}
\log\frac1{|x|}
\ge
C(\gamma)
\left(
1+\log\frac1{|x|}
\right), \nonumber
\end{align}
where $C(\gamma) := 2[1+\log 2]^{-1} \,e^{-(\gamma+1)}\log 2 >0$
and the last inequality uses that \(\log\frac1{|x|}\ge\log2\) since \(0<|x|\le1/2\).

For the numerator, using \(e^{-\gamma s}\le1\) and the change of variables \(u=|x|^2/(2s)\), we obtain
\begin{align*}
\int_t^\infty
\frac1{s^2}
e^{-\gamma s-\frac{|x|^2}{2s}}
\,ds
&\le
\int_0^\infty
\frac1{s^2}
e^{-\frac{|x|^2}{2s}}
\,ds
=
\frac2{|x|^2}
\int_0^\infty e^{-u}\,du
=
\frac2{|x|^2}.
\end{align*}
Substituting the above estimates into~\eqref{RegularizedDriftIntegralRatio} yields the desired bound for \(\sqrt{2t}\le|x|\le1/2\). \vspace{.2cm}

\noindent\textit{Case 3: \(|x|>1/2\).}
We first consider the subregion \(1/2<|x|<\sqrt{2t}\). Since the proof of Case~1 uses only the condition \(|x|<\sqrt{2t}\), the same argument applies in the present subregion and yields the first inequality below.
\[
\bigl|\widehat b_t^\gamma(x)\bigr|
\le
\frac{
C(\gamma,T)|x|
}{
t\left(
1+\log^+\frac1{\gamma t}
\right)
}
\le
\frac{C(\gamma,T)|x|}{t}
\le
8 C(\gamma,T) \sqrt{2T}
\]
The second inequality follows from the elementary estimate
\(1+L\ge1\) for \(L\ge0\), whereas the last inequality uses that
\(t > |x|^2/2 > 1/8\) and \(|x|<\sqrt{2t} \le \sqrt{2T}\).

It remains to consider the region \(|x|>1/2\) and \(t\le |x|^2/2\). Set
\[
D_t(x)
:=
\int_t^\infty
\frac1s
e^{-\gamma s-\frac{|x|^2}{2s}}
\,ds,
\qquad
N_t(x)
:=
|x|
\int_t^\infty
\frac1{s^2}
e^{-\gamma s-\frac{|x|^2}{2s}}
\,ds.
\]
We claim that there exists a constant \(C=C(\gamma,T)>0\) such that
\(D_t(x)\ge C D_0(x)\) throughout this region. Observe that
\[
D_0(x)-D_t(x)
=
\int_0^t
\frac1s
e^{-\gamma s-\frac{|x|^2}{2s}}
\,ds
\le
\int_0^T
\frac1s
e^{-\frac{|x|^2}{2s}}
\,ds
=
\int_{|x|^2/(2T)}^\infty
\frac{e^{-u}}u
\,du
\le
\frac{2T}{|x|^2}
e^{-\frac{|x|^2}{2T}},
\]
where the second equality uses the change of variables
\(u=|x|^2/(2s)\). On the other hand, using~\eqref{AsympBesselInfinity} for $\nu=0$, there exist constants
\(C_3=C_3(\gamma)>0\) and
\(R_0=R_0(\gamma,T)>\max\{1/2,\sqrt{2T}\}\) such that
\[
D_0(x)
=
2K_0\left(\sqrt{2\gamma}|x|\right)
\ge
C_3|x|^{-1/2}e^{-\sqrt{2\gamma}|x|},
\qquad
|x|\ge R_0.
\]
Consequently,
\[
\frac{D_0(x)-D_t(x)}{D_0(x)}
\le
\frac{2T}{C_3}
|x|^{-3/2}
e^{-\frac{|x|^2}{2T}+\sqrt{2\gamma}|x|},
\qquad
|x|\ge R_0.
\]
Since
\(
|x|^{-3/2}
e^{-\frac{|x|^2}{2T}+\sqrt{2\gamma}|x|}
\to0
\)
as \(|x|\to\infty\), by increasing \(R_0\) if necessary, we obtain
\(
\frac{D_0(x)-D_t(x)}{D_0(x)}
\le\frac12
\)
for all \(|x|\ge R_0\). It follows that
\[
D_t(x)
\ge
\frac12D_0(x),
\qquad
|x|\ge R_0.
\]

It remains to consider \(1/2<|x|\le R_0\) and
\(0<t\le\min\{T,|x|^2/2\}\). Since \(D_t(x)\) is decreasing in \(t\), we have
\(D_t(x)\ge D_T(x)\), which implies the first inequality below:
\[
D_t(x)
\ge
D_0(x)\,\frac{D_T(x)}{D_0(x)}
\ge
D_0(x)\,
\inf_{1/2\le|x|\le R_0}
\frac{D_T(x)}{D_0(x)}
=
C_4(\gamma, T) \, D_0(x),
\]
where the second inequality uses that \(D_T(x)/D_0(x)\) is positive and continuous on the compact set \(\{x:1/2\le|x|\le R_0\}\), and therefore the infimum $C_4(\gamma, T) := \inf_{1/2\le|x|\le R_0} \frac{D_T(x)}{D_0(x)}$ is a positive finite constant depending only on $\gamma$ and $T$. Defining $C_5(\gamma, T) := \min\{1/2, C_4(\gamma, T)\}>0$, we conclude that \(D_t(x)\ge C_5(\gamma, T) D_0(x)\) for \(|x|>1/2\) and \(t\le |x|^2/2\). Consequently,
\[
\bigl|\widehat b_t^\gamma(x)\bigr|
\overset{\eqref{RegularizedDriftIntegralRatio}}{=}
\frac{N_t(x)}{D_t(x)}
\le
\frac{1}{C_5(\gamma, T)} \, \frac{N_0(x)}{D_0(x)}
\overset{\eqref{Drfitfunction}}{=}
\frac{1}{C_5(\gamma, T)} \, |b^\gamma(x)|
\le
C(\gamma, T),
\]
where the first inequality also uses that \(N_t(x)\le N_0(x)\), and the last inequality uses~\eqref{BoundGroundStateDrift2d} in the case \(|x|>\frac12\). This establishes the desired bound in Case~3, and completes the proof of the lemma.
\end{proof}

\begin{proof}[Proof of Lemma~\ref{CorRegularizedDriftComparison} for $d=2$]
Recall the estimates~\eqref{BoundGroundStateDrift2d} for
\(\bigl|b^\gamma(x)\bigr|\). Therefore, the desired comparison follows
immediately from Lemma~\ref{LemmaRegularizedDriftBound2d} when
\(\sqrt{2t}\le |x|\le 1/2\) or \(|x|>1/2\). It remains to consider
\(0<|x|<\min\{\sqrt{2t},1/2\}\), which we divide into the two cases
\(0<|x|^2\le t\) and \(t<|x|^2<2t\).

When \(0<|x|^2\le t\) and \(0<|x|<1/2\), we have
\[
\log\frac1{|x|}
=
\frac12\log\frac1t
+
\frac12\log\frac{t}{|x|^2}
\le
\frac12\log^+\frac1t
+
\frac12\log\frac{t}{|x|^2},
\]
where the inequality uses
\(\log t^{-1}\le\log^+t^{-1}\). Hence, we have the first inequality below:
\begin{align*}
\frac{|x|^2}{t}
\left(
1+\log\frac1{|x|}
\right)
&\le
\frac{|x|^2}{t}
\left(
1+\frac12\log^+\frac1t
\right)
+
\frac12
\frac{|x|^2}{t}
\log\frac{t}{|x|^2}
\\
&\le
1+\frac12\log^+\frac1t
+
\frac12
\sup_{0<u\le1}
u\log\frac1u
\\
&\le
C\left(
1+\log^+\frac1t
\right)
\le
C(\gamma,T)
\left(
1+\log^+\frac1{\gamma t}
\right),
\end{align*}
where in the second inequality we used
\(u=|x|^2/t\in(0,1]\) and the fact that the function
\(u\mapsto u\log(1/u)\) is continuous on \((0,1]\), tends to \(0\) as
\(u\downarrow0\), and hence extends continuously to \([0,1]\). The last inequality follows from the fixed value of \(\gamma>0\).

Also, if \(t<|x|^2<2t\) and \(0<|x|<1/2\), then we obtain the same estimate as follows
\[
\frac{|x|^2}{t}
\left(
1+\log\frac1{|x|}
\right)
\le
2
\left(
1+\log^+\frac1{\sqrt t}
\right)
\le
C(\gamma,T)
\left(
1+\log^+\frac1{\gamma t}
\right).
\]
Combining the above observations with
Lemma~\ref{LemmaRegularizedDriftBound2d} in the case
\(0<|x|<\min\{\sqrt{2t},1/2\}\), we obtain the first inequality below:
\[
\bigl|\widehat b_t^\gamma(x)\bigr|
\le
\frac{C}{
|x|
\left(
1+\log\frac1{|x|}
\right)
}
\le
C\,|b^\gamma(x)|,
\]
where the second inequality follows from the lower bound in~\eqref{BoundGroundStateDrift2d}.
\end{proof}

\subsection{The regularized drift: \texorpdfstring{$d=3$}{Lg} }  \label{SecRegularizedDrift3d}

Since \(|b^\gamma(x)|=\gamma+|x|^{-1}\) for \(x\in\mathbb R^3 \setminus \{0\}\) by~\eqref{Drfitfunction}, the following lemma yields Lemma~\ref{CorRegularizedDriftComparison} for \(d=3\).

\begin{lemma}\label{LemmaHatDriftBound}
Let \(T, \gamma>0\). There exists a constant \(C=C(\gamma,T)>0\) such that, for all
\(0<t\le T\) and all \(x\in\mathbb R^3\setminus\{0\}\),
\[
|\widehat b_t^\gamma(x)|
\le
C\left(
\gamma
+
\frac1{\sqrt t}\mathbf 1_{\{|x|\le\sqrt t\}}
+
\frac1{|x|}\mathbf 1_{\{|x|>\sqrt t\}}
\right).
\]
\end{lemma}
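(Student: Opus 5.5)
Following the $d=2$ argument in Lemma~\ref{LemmaRegularizedDriftBound2d}, I will first write $|\widehat b_t^\gamma(x)|$ as an explicit ratio of integrals. By Lemma~\ref{LemmaGaussianGroundStateConvolution} with $\lambda_\gamma=\gamma^2/2$ and the definition~\eqref{DefIncompleteBessel}, substituting $u=\gamma^2 s/2$ and taking $z=\gamma|x|$ gives
\[
|\widehat b_t^\gamma(x)|
=
\gamma\,\frac{K_{3/2}(\gamma|x|,\gamma^2t/2)}{K_{1/2}(\gamma|x|,\gamma^2t/2)}
=
|x|\,
\frac{\int_t^\infty s^{-5/2}e^{-\gamma^2 s/2-|x|^2/(2s)}\,ds}{\int_t^\infty s^{-3/2}e^{-\gamma^2 s/2-|x|^2/(2s)}\,ds}
=:\frac{N_t(x)}{D_t(x)} .
\]
The powers of $\gamma$ and the factor $2$ cancel exactly, as they do in the $d=2$ computation. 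An equivalent route is to compute $g_t*\psi_\gamma$ explicitly in terms of complementary error functions, but I prefer the integral ratio, since it permits the same case analysis as in $d=2$.

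\emph{Case 1: $|x|\le\sqrt t$.} For $s\ge t$ we have $|x|^2/(2s)\le 1/2$. Restricting the denominator to $s\in[t,2t]$ gives
\[
D_t(x)\ge e^{-1/2}e^{-\gamma^2 T}\int_t^{2t}s^{-3/2}\,ds = c(\gamma,T)\,t^{-1/2}.
\]
For the numerator, $N_t(x)\le |x|\int_t^\infty s^{-5/2}\,ds=\tfrac23|x|t^{-3/2}$. Hence $|\widehat b_t^\gamma(x)|\le C|x|/t\le C/\sqrt t$.

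\emph{Case 2: $|x|>\sqrt t$.} Here I drop $e^{-\gamma^2s/2}$ in the numerator and substitute $u=|x|^2/(2s)$ to get $N_t(x)\le |x|\int_0^\infty s^{-5/2}e^{-|x|^2/(2s)}\,ds=C|x|^{-2}$.

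For the denominator with $|x|\le 1$ (or, more generally, $|x|\le R$), I restrict to $s\in[|x|^2,2|x|^2]\subset[t,\infty)$. On this interval $|x|^2/(2s)\le 1/2$ and $\gamma^2 s\le 2\gamma^2R^2$, so $D_t(x)\ge c\,|x|^{-1}$. This gives $|\widehat b_t^\gamma|\le C/|x|$.

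\emph{Large $|x|$: the main obstacle.} When $|x|\ge R$ is large, the Gaussian factor makes both integrals exponentially small, so the crude bounds above fail. I expect this step to be the main difficulty, and I will handle it as in Case~3 of the $d=2$ proof. In the complete case $t=0$,
\[
N_0(x)/D_0(x)=\gamma K_{3/2}(\gamma|x|)/K_{1/2}(\gamma|x|)=\gamma+|x|^{-1}=|b^\gamma(x)|,
\]
by~\eqref{Drfitfunction}. It therefore suffices to show $D_t(x)\ge cD_0(x)$ for $t\le T$, and then use $N_t\le N_0$.

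The deficit satisfies
\[
D_0(x)-D_t(x)\le\int_0^T s^{-3/2}e^{-|x|^2/(2s)}\,ds\le C|x|^{-1}e^{-|x|^2/(2T)}.
\]
By contrast, $D_0(x)=C|x|^{-1}e^{-\gamma|x|}$ exactly, since $K_{1/2}$ is elementary. So the ratio of deficit to $D_0$ is bounded by $Ce^{-|x|^2/(2T)+\gamma|x|}$, which is at most $1/2$ once $R$ is large. This is where the Gaussian decay of the truncation beats the exponential decay of the ground state.

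\emph{Compact region.} For $1\le|x|\le R$, I use $D_t\ge D_T$ and continuity and positivity of $D_T/D_0$ on the compact annulus to obtain a uniform constant. Then $|\widehat b_t^\gamma(x)|\le C(\gamma+|x|^{-1})\le C\gamma$ for $|x|\ge1$, which is absorbed into the stated bound.

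Combining the cases gives the lemma.
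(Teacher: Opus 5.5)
Your proof is correct, but for $d=3$ it takes a different route from the paper. It is essentially the paper's $d=2$ argument (Lemma~\ref{LemmaRegularizedDriftBound2d}) carried over to $d=3$. Your steps check out: the exact ratio $N_t/D_t$ with the $\gamma$'s cancelling, the bounds for $|x|\le\sqrt t$ and for $\sqrt t<|x|\le R$, the closed forms $D_0(x)=\sqrt{2\pi}\,|x|^{-1}e^{-\gamma|x|}$ and $N_0/D_0=\gamma+|x|^{-1}$, and the deficit estimate.

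The paper instead works in physical space. It integrates by parts to get $\nabla(g_t*\psi_\gamma)=g_t*\nabla\psi_\gamma$, which exhibits $\widehat b_t^\gamma(x)$ as a $g_t(x-\cdot)\psi_\gamma$-weighted average of $b^\gamma$. This gives $|\widehat b_t^\gamma(x)|\le\gamma+\bigl(g_t*(\psi_\gamma/|\cdot|)\bigr)(x)/(g_t*\psi_\gamma)(x)$. The paper then bounds this ratio by splitting the $y$-integral:
\begin{itemize}
\item when $|x|\le\sqrt t$, it uses the ball $|y|\le\sqrt t$;
\item when $|x|>\sqrt t$, it separates $|y|\ge|x|/2$ from $|y|<|x|/2$, with a lower bound coming from the ball $|y-x|\le\sqrt t/2$.
\end{itemize}
Because the constant part $\gamma$ of $b^\gamma$ is peeled off exactly, the paper treats all of $\{|x|>\sqrt t\}$ at once. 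It needs no large-$|x|$ regime and no compactness argument; the Gaussian-versus-exponential competition enters only when absorbing the $|y|<|x|/2$ contribution.

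Your representation trades this for one-dimensional integrals in the subordination variable $s$. The cost is the split of $\{|x|>\sqrt t\}$ into regimes, including the perturbative comparison $D_t\ge\frac12 D_0$ for large $|x|$. The annulus compactness step, which gives a non-explicit constant, is actually dispensable. Once $R=R(\gamma,T)$ is fixed, your direct bound $D_t(x)\ge c\,|x|^{-1}$ from restricting to $s\in[|x|^2,2|x|^2]$ already covers all of $\sqrt t<|x|\le R$, as your parenthetical remark anticipates. In exchange, your route treats $d=2$ and $d=3$ identically. It also avoids justifying the integration by parts across the singularity of $\psi_\gamma$ at the origin, which the paper does not spell out.
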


\begin{proof}
Using \(\nabla_xg_t(x-y)=-\nabla_yg_t(x-y)\) and integrating by parts in the variable \(y\), we obtain
\[
\nabla(g_t*\psi_\gamma)(x)
=
\int_{\mathbb R^3}
g_t(x-y)\nabla\psi_\gamma(y)\,dy 
=
-\int_{\mathbb R^3}
g_t(x-y)\left(\gamma+\frac1{|y|}\right)
\frac{y}{|y|}\psi_\gamma(y)dy,
\]
where the second equality uses~\eqref{EigenFunctionEigenValue}. Since \(\widehat b_t^\gamma=\nabla\log(g_t*\psi_\gamma)\) on $\R^3 \setminus \{0\}$ by~\eqref{RegularizedDrfitfunction}, we have
\begin{align} \label{RegularizedDriftInequality}
|\widehat b_t^\gamma(x)|
\le
\gamma
+
\frac{
\int_{\mathbb R^3}
g_t(x-y)\frac{1}{|y|}\psi_\gamma(y)\,dy
}{
\int_{\mathbb R^3}
g_t(x-y)\psi_\gamma(y)\,dy
}.
\end{align}
It remains to estimate the last ratio. We first consider the case
\(|x|\le \sqrt t\). Since \(|x|\le \sqrt t\) and \(0<t\le T\), we have
\[
|x-y|^2\le 4t, 
\quad \textup{and} \quad 
\gamma |y| \le \gamma \sqrt{T}
\qquad \text{for } |y|\le \sqrt t,
\]
and therefore using the definitions of $g_t(x-y)$ and \(\psi_\gamma(y)\), we obtain
\[
\int_{\mathbb R^3}
g_t(x-y)\psi_\gamma(y)\,dy
\ge
\frac{C}{t^{3/2}}
\int_{|y|\le \sqrt t}
\frac{1}{|y|}\,dy
=
\frac{C}{t^{3/2}}
\int_0^{\sqrt t} r\,dr
=
\frac{C}{\sqrt{t}}.
\]
for some constant $C=C(\gamma,T)>0$. On the other hand, since \(|x|\le \sqrt t\), we have $|x-y|^2\ge \frac12 |y|^2-|x|^2 \ge \frac12 |y|^2-t$. Therefore,
\[
\int_{\mathbb R^3}
g_t(x-y)\frac{\psi_\gamma(y)}{|y|}\,dy
\le
\frac{C(\gamma)}{t^{3/2}}
\int_{\mathbb R^3}
e^{-\frac{|y|^2}{4t}}
\frac{1}{|y|^2}\,dy
=
\frac{C(\gamma)}{t^{3/2}}
\int_0^\infty e^{-\frac{r^2}{4t}}\,dr
=
\frac{C(\gamma)}{t}
\int_0^\infty e^{-u^2}\,du
\le
\frac{C(\gamma)}{t},
\]
where the last equality uses the change of variables $r=2\sqrt{t}u$ and the final inequality uses that $\int_0^\infty e^{-u^2}\,du=\sqrt{\pi}/2$. Substituting these observations into~\eqref{RegularizedDriftInequality}, we obtain, for
\(|x|\le \sqrt t\),
\[
|\widehat b_t^\gamma(x)|
\le
\gamma
+
\frac{C(\gamma,T)}{\sqrt t}
\le
\gamma
+
\frac{C(\gamma,T)}{|x|}.
\]
It remains to consider the case \(|x|>\sqrt t\). We split the numerator in~\eqref{RegularizedDriftInequality} as
\begin{align}\label{IntNumerator}
\int_{\mathbb R^3}
g_t(x-y)\frac{\psi_\gamma(y)}{|y|}\,dy
&=
\int_{\{|y|\ge |x|/2\}}
g_t(x-y)\frac{\psi_\gamma(y)}{|y|}\,dy
+
\int_{\{|y|< |x|/2\}}
g_t(x-y)\frac{\psi_\gamma(y)}{|y|}\,dy \nonumber \\
&\le
\frac{2}{|x|}
\int_{\{|y|\ge |x|/2\}}
g_t(x-y)\psi_\gamma(y)\,dy 
+ 
\frac{C(\gamma)}{t^{3/2}}e^{-\frac{|x|^2}{8t}}
\int_{\{|y|<|x|/2\}}
\frac{1}{|y|^2}\,dy \nonumber \\
&\le
\frac{2}{|x|}
\int_{\mathbb R^3}
g_t(x-y)\psi_\gamma(y)\,dy 
+ 
C(\gamma)\frac{|x|}{t^{3/2}}e^{-\frac{|x|^2}{8t}},
\end{align}
where the first inequality holds since on the region \(\{|y|\ge |x|/2\}\), we have \(|y|^{-1}\le 2|x|^{-1}\), and if \(|y|<|x|/2\), then \(|x-y|\ge |x|/2\). Observe that,
\[
\frac{|x|}{t^{3/2}}e^{-\frac{|x|^2}{8t}}
=
\frac{e^{-\frac32\gamma |x|}}{|x|^2}
\bigg[\left(\frac{|x|}{\sqrt t}\right)^3
e^{-\frac{|x|^2}{8t}+\frac32\gamma |x|} \bigg]
\le
C(\gamma,T)\frac{e^{-\frac32\gamma |x|}}{|x|^2},
\]
where the inequality follows by setting \(u=|x|/\sqrt t>1\), using \(t\le T\), and observing that the function $u\longmapsto u^3 \exp \big( -\frac{u^2}{8} + \frac32\gamma\sqrt T\,u \big)$ is bounded on \([1,\infty)\). On the other hand, $|y| \le |x|+|x-y| \le \frac{3}{2}|x|$ for $|x|>\sqrt t$ and $|y-x|\le \sqrt t/2$. Therefore,
\begin{align*}
\int_{\mathbb R^3}g_t(x-y)\psi_\gamma(y)\,dy
\ge
\int_{\{|y-x|\le \sqrt t/2\}}
g_t(x-y)\psi_\gamma(y)\,dy \ge
\frac{C(\gamma)}{t^{3/2}}
\frac{e^{-\frac32\gamma |x|}}{|x|}
\int_{\{|y-x|\le \sqrt t/2\}} \,dy 
\ge
C(\gamma,T)\,\frac{e^{-\frac32\gamma |x|}}{|x|},
\end{align*}
where the final inequality uses that the volume of the ball $\{|y-x|\le \sqrt t/2\}$ is $Ct^{3/2}$. Substituting the above observations into~\eqref{IntNumerator}, we obtain
\[
\int_{\mathbb R^3}
g_t(x-y)\frac{\psi_\gamma(y)}{|y|}\,dy
\le
\frac{C(\gamma,T)}{|x|}
\int_{\mathbb R^3}
g_t(x-y)\psi_\gamma(y)\,dy .
\]
Therefore,~\eqref{RegularizedDriftInequality} implies
\[
|\widehat b_t^\gamma(x)|
\le
\gamma+\frac{C(\gamma,T)}{|x|}
\le
C(\gamma,T)\left(\gamma+\frac1{|x|}\right),
\]
for \(|x|>\sqrt t\), which completes the proof.
\end{proof}

\section{Square integrability of the martingale \texorpdfstring{$\mathbf M^{T,\gamma}$}{Lg} }  \label{SqureIntegrability}

In this section, we establish the integrability estimates needed for the martingale \(\mathbf M^{T,\gamma}\) defined in~\eqref{MartingaleFormula} and for the time-integrated regularized drift \(\int_0^t \widehat b_{T-s}^{\gamma}(X_s)\,ds\) appearing in Lemma~\ref{LemGroundStateVisitation}. The main estimates are collected in the following lemma.

\begin{lemma} \label{LemSquareIntegrability}
Fix \(d\in\{2,3\}\), \(T,\gamma>0\), and
\(x\in\mathbb R^d\setminus\{0\}\). Then
\[
\int_0^T
\int_{\mathbb R^d}
q_s^\gamma(x,y)
\,
\bigl|
 S_{T-s}^{\gamma}(y)
b^{\gamma}(y)
\bigr|^2
\,dy\,ds
<\infty .
\]
Moreover, the same estimate holds with
$S_{T-s}^{\gamma}(y)b^{\gamma}(y)$ replaced by either
$S_{T-s}^{\gamma}(y)\widehat b_{T-s}^{\gamma}(y)$ or
$\widehat b_{T-s}^{\gamma}(y)$.
\end{lemma}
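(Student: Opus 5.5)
The plan is to reduce all three integrals to explicit one-variable estimates, combining three ingredients: a pointwise bound on \(q_s^\gamma(x,\cdot)\) near the origin, a pointwise bound on \(S_t^\gamma\) near the origin obtained from Lemma~\ref{LemmaGaussianGroundStateConvolution}, and the drift bounds of Section~\ref{SecRegularizedDrift}. A crude bound will not do. Replacing \(S\) by \(1\) in the first integral gives an integrand of order \(|y|^{-4}\cdot|y|^{2}\,d|y|\) when \(d=3\), using \(q_s^\gamma(x,y)\asymp|y|^{-2}\). When \(d=2\) it gives order \(\log^2\cdot(|y|^2\log^2)^{-1}|y|\,d|y|\). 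Both diverge at the origin. So the decay of \(S_{T-s}^\gamma(y)\) as \(y\to0\), or the saturation of \(\widehat b_{T-s}^\gamma\) there, is essential. Since \(0\le S\le1\), the second integral (with \(S\widehat b\)) is dominated by the third (with \(\widehat b\) alone), so only two estimates are needed.

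\emph{Step 1: transition density.} I first record a bound of the form
\[
q_s^\gamma(x,y)\le C(x,\gamma,T)\Big(g_{2s}(x-y)\frac{\psi_\gamma(y)}{\psi_\gamma(x)}+\phi_d(y)\,e^{-c|y|}\Big),\qquad 0<s\le T,
\]
with \(\phi_3(y)=|y|^{-2}\) and \(\phi_2(y)=(1+\log^+|y|^{-1})^2\). This comes from~\eqref{FundamentalSolution},~\eqref{DefPerturbedKernel} and~\eqref{DefTransDensity}.
\begin{itemize}
\item The free part gives the first term. Near \(y=0\) it is harmless because \(g_s(x-y)\) is bounded for \(|y|<|x|/2\).
\item The perturbation \(h_s^\gamma(x,y)\) is bounded by \(C(x,T)|y|^{-1}\) for \(d=3\), using \(t\,\bar g_t(|x|+|y|)\le C(x)\). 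For \(d=2\) it is bounded by \(C(x,T)(1+\log^+|y|^{-1})\), from the \(\nu'\) integral and \(\int_0^t g_{t-s}(y)\,ds\lesssim\log^+|y|^{-1}\).
\item The bounds are uniform in \(s\), because \(x\ne0\) is fixed and so \(g_r(x)\) kills small \(r\).
\end{itemize}
Multiplying by \(\psi_\gamma(y)\) then gives \(\phi_d\). Away from the origin, Gaussian tails together with the exponential decay of \(\psi_\gamma\) and~\eqref{BoundGroundStateDrift2d} and~\eqref{Drfitfunction} make all integrands integrable. The whole difficulty is therefore the region \(|y|\le1/2\) with \(s\) near \(T\).

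\emph{Step 2: bound on \(S\).} By~\eqref{Sfunction} and Lemma~\ref{LemmaGaussianGroundStateConvolution}, \(S_t^\gamma(y)=K_{d/2-1}(\sqrt{2\lambda_\gamma}|y|,\lambda_\gamma t)/K_{d/2-1}(\sqrt{2\lambda_\gamma}|y|)\).
\begin{itemize}
\item For \(d=3\), bounding \(e^{-z^2/4u}\le1\) in~\eqref{DefIncompleteBessel} gives \(K_{1/2}(z,\lambda t)\le Cz^{1/2}(\lambda t)^{-1/2}\). Combined with \(K_{1/2}(z)\asymp z^{-1/2}\) near \(0\), this yields \(S_t^\gamma(y)\le C\min\{1,|y|/\sqrt t\}\).
\item For \(d=2\), the same argument gives \(K_0(z,\gamma t)\le E(\gamma t)\le C(1+\log^+\tfrac1{\gamma t})\). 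Combined with~\eqref{AsympBesselOrigin}, this yields \(S_t^\gamma(y)\le C\min\{1,(1+\log^+\tfrac1t)/(1+\log\tfrac1{|y|})\}\).
\end{itemize}

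\emph{Step 3: the integrals.} Write \(r=\sqrt{T-s}\) and split \(\{|y|\le1/2\}\) into \(|y|<r\) and \(|y|\ge r\).
\begin{itemize}
\item \(d=3\), first integral. On \(|y|<r\), Step 2 gives \(S^2|b|^2\le C|y|^2r^{-2}\cdot|y|^{-2}=Cr^{-2}\), and \(\int_{|y|<r}|y|^{-2}\,dy\sim r\). The contribution is \(\int_0^T(T-s)^{-1/2}\,ds<\infty\). On \(|y|\ge r\), I use \(S\le1\) and \(\int_{|y|\ge r}|y|^{-4}\,dy\sim r^{-1}\), which again gives \((T-s)^{-1/2}\).
\item \(d=3\), \(\widehat b\) alone. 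Lemma~\ref{LemmaHatDriftBound} gives exactly the same two bounds, so the same computation applies.
\item \(d=2\), first integral. On \(|y|<\sqrt{2(T-s)}\), the bound \(S^2|b|^2\lesssim(1+\log^+\tfrac1{T-s})^2|y|^{-2}(1+\log\tfrac1{|y|})^{-4}\) times \(\phi_2\) integrates in \(y\) to \(\lesssim(1+\log^+\tfrac1{T-s})^2/(1+\log\tfrac1{T-s})\). This is integrable in \(s\).
\item \(d=2\), \(\widehat b\) alone. On the same region, Lemma~\ref{LemmaRegularizedDriftBound2d} gives \(|\widehat b|^2\lesssim|y|^2(T-s)^{-2}(1+\log^+)^{-2}\). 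Integrating against \(\phi_2\) gives \(\lesssim\log^2(T-s)^{-1}\cdot(T-s)^{-1}\log^{-2}\), which is borderline. It should be handled by keeping the exact \(y\)-integral, \(\int_0^{r}\rho^3\log^2(1/\rho)\,d\rho\sim r^4\log^2 r\), giving \(\sim(T-s)\cdot(T-s)^{-2}\log^2/\log^2\).
\end{itemize}

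The main obstacle is this last borderline \(d=2\) bookkeeping near \(s=T\), \(y=0\), and in the \(d=2\) annulus \(\sqrt{2(T-s)}\le|y|\le1/2\). There, \(\int\phi_2|y|^{-2}\log^{-2}\,|y|\,d|y|=\int d\rho/\rho\) produces \(\log\tfrac1{T-s}\), which is still integrable in \(s\). If it is not fine enough, I would first try to sharpen Step 1 to \(q\lesssim\psi_\gamma(y)\log(1/|y|)^{1-\epsilon}\) using the \(\nu'\) structure, or else to exploit the extra \(\log^{-2}\) supplied by \(S\) or by the lower bound in~\eqref{BoundGroundStateDrift2d}.
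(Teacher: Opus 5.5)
Your strategy is sound, but the step you flag as the ``main obstacle'' is an arithmetic slip. As written, it leaves the $d=2$, $\widehat b$-alone estimate unproved.

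\textbf{The slip.} With your $r=\sqrt{T-s}$, one has $r^4=(T-s)^2$, not $T-s$. Set $a:=\min\{\sqrt{2(T-s)},1/2\}$ and work on $\{0<|y|<a\}$ near the origin, where your Step~1 gives $q_s^\gamma(x,y)\lesssim(1+\log\frac1{|y|})^2$. Lemma~\ref{LemmaRegularizedDriftBound2d} then yields
\[
\int_{\{0<|y|<a\}}q_s^\gamma(x,y)\bigl|\widehat b_{T-s}^\gamma(y)\bigr|^2\,dy
\lesssim
\frac{\int_0^a\rho^3\bigl(1+\log\frac1\rho\bigr)^2\,d\rho}{(T-s)^2\bigl(1+\log^+\frac1{\gamma(T-s)}\bigr)^2}
\lesssim
\frac{a^4\bigl(1+\log\frac1a\bigr)^2}{(T-s)^2\bigl(1+\log^+\frac1{\gamma(T-s)}\bigr)^2}
\lesssim1 .
\]
So this term is bounded, not borderline.

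Your cruder first estimate has the same defect. It drops the area factor in $\int_{\{|y|<\sqrt{2(T-s)}\}}(1+\log\frac1{|y|})^2\,dy\asymp(T-s)\log^2\frac1{T-s}$; restoring it also gives $O(1)$.

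As you wrote it, however, $(T-s)\cdot(T-s)^{-2}=(T-s)^{-1}$ is not integrable on $[0,T]$. Neither fallback would rescue a genuine divergence:
\begin{itemize}
\item $h_s^\gamma(x,y)$ really grows like $\log\frac1{|y|}$ as $y\to0$. So $q_s^\gamma(x,\cdot)\asymp\log^2\frac1{|y|}$ there, and no bound of the form $\psi_\gamma(y)\log^{1-\epsilon}\frac1{|y|}$ can hold.
\item The $\widehat b$-alone integrand contains no factor of $S$ to exploit.
\end{itemize}
The corrected computation is exactly the paper's estimate of $\widetilde K_{s,1}^{T,\gamma}$.

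\textbf{Comparison with the paper.} Once this is fixed your proof closes. In $d=2$ it takes a different and lighter route than the paper.

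For the $S_{T-s}^\gamma b^\gamma$ integral, the paper proceeds as follows:
\begin{itemize}
\item it rewrites the integrand through $|(g_{T-s}*\psi_\gamma)b^\gamma|^2/\psi_\gamma$;
\item it keeps the factor $s^{-1}(1+\log^+\frac1{s\gamma})^{-1}$ and the $s$-dependent $\log^+\frac{2s}{|y|^2}$ from~\eqref{2dPointInteractionTerm};
\item it carries out a two-variable analysis in $R=|y|^2/(2(T-s))$ and $a=s/T$;
\item it treats the Gaussian part separately by splitting at $|y|=|x|/2$;
\item it deduces the $S\widehat b$ case via Lemma~\ref{CorRegularizedDriftComparison}.
\end{itemize}

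You instead use that $x\neq0$ is fixed to get $h_s^\gamma(x,y)\lesssim1+\log^+\frac1{|y|}$ uniformly in $s$. Combined with the incomplete-Bessel bound on $S_t^\gamma$, this reduces each time slice to one radial integral of size $O(\log\frac1{T-s})$. You then get the $S\widehat b$ case from the $\widehat b$ case, since $0\le S\le1$. This is shorter, and it is in the same spirit as the paper's own treatment of $\widetilde K_s^{T,\gamma}$.

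Away from the origin you can skip the tail bookkeeping entirely. By Lemma~\ref{CorRegularizedDriftComparison}, all three integrands are bounded on $\{|y|\ge\delta\}$ uniformly in $s$, and $\int q_s^\gamma(x,y)\,dy=1$.

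In $d=3$ your argument is essentially the paper's: the same split at $|y|=\sqrt{T-s}$ and the same $O((T-s)^{-1/2})$ contribution. The only difference is that you obtain $S_t^\gamma(y)\lesssim|y|/\sqrt t$ from the Bessel integral rather than from a direct bound on $g_t*\psi_\gamma$.
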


\subsection{Square integrability of the martingale: \texorpdfstring{$d=2$}{Lg} } \label{SqureIntegrability2d}
Recall from~\eqref{EigenFunctionEigenValue} that the normalized positive eigenfunction \(\psi_\gamma:\mathbb R^2\setminus\{0\}\to(0,\infty)\) is given by
\[
\psi_\gamma(x)
=
\sqrt{\frac{2\gamma}{\pi}}\,
K_0\!\left(\sqrt{2\gamma}\,|x|\right)
\]
Thus, on \(0<|x|\le1/2\), the function \(\psi_\gamma(x)\) is comparable to \(1+\log |x|^{-1}\) by the case \(\nu=0\) in~\eqref{AsympBesselOrigin}, together with the positivity and continuity of \(K_0\) on compact subintervals of \((0,\infty)\).
Likewise,~\eqref{AsympBesselInfinity} shows that, for large \(|x|\),
the function \(\psi_\gamma(x)\) behaves like $|x|^{-1/2} e^{-\sqrt{2\gamma}|x|}$ and the positivity and continuity of \(K_0\) extend this comparison to the entire region \(|x|>1/2\). Consequently, there exists a constant \(C=C(\gamma)>1\) such that, for all \(x\in\mathbb R^2\setminus\{0\}\),
\begin{align}
\psi_\gamma(x)
&\ge
\frac1C
\left[
\left(
1+\log \dfrac1{|x|}
\right)
\mathbf 1_{\{|x|\le1/2\}}
+
\frac{
e^{-\sqrt{2\gamma}|x|}
}{
\sqrt{|x|}
}
\mathbf 1_{\{|x|>1/2\}}
\right], \label{LowerBoundGroundState} \\
\psi_\gamma(x)
&\le
C
\left[
\left(
1+\log\dfrac1{|x|}
\right)
\mathbf 1_{\{|x|\le1/2\}}
+
\frac{
e^{-\sqrt{2\gamma}|x|}
}{
\sqrt{|x|}
}
\mathbf 1_{\{|x|>1/2\}}
\right]. \label{UpperBoundGroundState}
\end{align}

\begin{lemma}
Let \(T,\gamma>0\). There exists a constant
\(C=C(\gamma,T)>0\) such that, for all \(0<t\le T\) and all
\(y\in\mathbb R^2\setminus\{0\}\),
\begin{align}\label{CombinedGroundStateBound}
\frac{
\bigl|(g_t*\psi_\gamma)(y)b^\gamma(y)\bigr|^2
}{
\psi_\gamma(y)
}
\le
C\Bigg(
\frac{
\left(1+\log^+\frac1{\gamma t}\right)^2
}{
|y|^2
\left(1+\log^+\frac1{|y|}\right)^3
}
\mathbf 1_{\{|y|^2<2t\}}
+
\frac{
1
}{
|y|^2
\left(1+\log^+\frac1{|y|}\right)
}
\mathbf 1_{\{|y|^2\ge2t\}}
\Bigg).
\end{align}
\end{lemma}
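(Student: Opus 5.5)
We bound the three factors $(g_t*\psi_\gamma)(y)$, $|b^\gamma(y)|^2$ and $1/\psi_\gamma(y)$ separately and multiply. By Lemma~\ref{LemmaGaussianGroundStateConvolution} with $d=2$, $\lambda_\gamma=\gamma$,
\[
(g_t*\psi_\gamma)(y)=\sqrt{\tfrac{2\gamma}{\pi}}\,e^{\gamma t}K_0\bigl(\sqrt{2\gamma}|y|,\gamma t\bigr)=\sqrt{\tfrac{2\gamma}{\pi}}\,\tfrac{e^{\gamma t}}{2}\int_t^\infty s^{-1}e^{-\gamma s-\frac{|y|^2}{2s}}\,ds ,
\]
after the change of variables $u=\gamma s$ already used in~\eqref{RegularizedDriftIntegralRatio}. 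This is the quantity $D_t(y)$ from the proof of Lemma~\ref{LemmaRegularizedDriftBound2d}, up to the bounded factor $e^{\gamma t}\le e^{\gamma T}$. Two upper bounds are immediate: $D_t(y)\le D_0(y)=2K_0(\sqrt{2\gamma}|y|)$, and $D_t(y)\le\int_t^\infty s^{-1}e^{-\gamma s}\,ds\le C(\gamma,T)\bigl(1+\log^+\frac1{\gamma t}\bigr)$. The second follows by splitting at $s=1/\gamma$, exactly mirroring the lower bound~\eqref{SmallRegionDenominator}.

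\emph{Region $|y|^2\ge 2t$.} Use $(g_t*\psi_\gamma)(y)\le C\psi_\gamma(y)$. Then the left side of~\eqref{CombinedGroundStateBound} is at most $C\psi_\gamma(y)|b^\gamma(y)|^2$.
\begin{itemize}
\item For $|y|\le 1/2$: by~\eqref{UpperBoundGroundState} and~\eqref{BoundGroundStateDrift2d} this is at most $C(1+\log\frac1{|y|})\cdot |y|^{-2}(1+\log\frac1{|y|})^{-2}$, which is the claimed term.
\item For $|y|>1/2$: $\psi_\gamma$ and $|b^\gamma|$ are bounded, so the product is bounded by a constant. A constant is dominated by $C|y|^{-2}$ only when $|y|$ is bounded, so this subregion needs more care (see below).
\end{itemize}

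\emph{Region $|y|^2<2t$.} Here $|y|<\sqrt{2T}$ is bounded. Use $(g_t*\psi_\gamma)(y)\le C(1+\log^+\frac1{\gamma t})$. Combine this with $|b^\gamma|^2\le C|y|^{-2}(1+\log^+\frac1{|y|})^{-2}$ and $1/\psi_\gamma\le C(1+\log^+\frac1{|y|})^{-1}$, both from~\eqref{LowerBoundGroundState} and~\eqref{BoundGroundStateDrift2d}. For $1/2<|y|<\sqrt{2T}$, the functions $\psi_\gamma$ and $|b^\gamma|$ are bounded above and below on this compact annulus, so the same bound holds there too. The product is exactly the first term of~\eqref{CombinedGroundStateBound}.

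\emph{Main obstacle: large $|y|$.} The remaining issue is the part of the region $|y|^2\ge 2t$ where $|y|$ is large. There the right side decays like $|y|^{-2}$, but $\psi_\gamma(y)|b^\gamma(y)|^2$ decays like $|y|^{-1/2}e^{-\sqrt{2\gamma}|y|}$, and this is smaller still. Indeed, since $K_1/K_0\to1$, we have $|b^\gamma|\to\sqrt{2\gamma}$, and~\eqref{UpperBoundGroundState} gives the stated decay of $\psi_\gamma$. So the bound holds with a constant, using $\sup_{r>1/2} r^{3/2}e^{-\sqrt{2\gamma}r}<\infty$. The only real care is needed at the boundary between the regimes and on the compact annuli. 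There the comparison constants come from continuity and positivity of $K_0,K_1$, and from $\log^+$ versus $\log$ on $|y|\le1/2$.
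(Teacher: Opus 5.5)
Your proof is correct and follows essentially the same route as the paper. For $|y|^2<2t$ you multiply the bound $(g_t*\psi_\gamma)(y)\le C\bigl(1+\log^+\frac{1}{\gamma t}\bigr)$ by the local bounds on $1/\psi_\gamma$ and $|b^\gamma|^2$ on $0<|y|\le\sqrt{2T}$. For $|y|^2\ge 2t$ you use $(g_t*\psi_\gamma)\le e^{\gamma T}\psi_\gamma$, split at $|y|=1/2$, and apply $\sup_{r>1/2} r^{3/2}e^{-\sqrt{2\gamma}r}<\infty$. The paper phrases that middle inequality as $0\le S_t^\gamma\le 1$, which is the same fact as your monotonicity $D_t\le D_0$ of the incomplete Bessel integral.
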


\begin{proof}
Using~\eqref{DefIncompleteBessel}, we have, for all \(z,a>0\),
\[
K_0(z,a)
=
\frac12
\int_a^\infty
\frac1u
e^{-u-\frac{z^2}{4u}}
\,du
\le
\frac12\int_a^\infty\frac{e^{-u}}u\,du
\le
C\left(1+\log^+\frac1a\right),
\]
where the last inequality follows by splitting the integral over
\([a,1]\) and \([1,\infty)\) when \(0<a<1\), while for \(a\ge1\) the integral
\(\int_a^\infty e^{-u}u^{-1}\,du\) is bounded by a universal constant. Consequently, it follows from Lemma~\ref{LemmaGaussianGroundStateConvolution} that, for all \(y\in\mathbb R^2\), 
\begin{align}\label{BoundGaussianBesselConvolution}
(g_t*\psi_\gamma)(y)
=
e^{\gamma t}
\sqrt{\frac{2\gamma}{\pi}}\,
K_0\!\left(\sqrt{2\gamma}\,|y|,\gamma t\right)
\le
C\left(
1+\log^+\frac1{\gamma t}
\right),
\end{align}
where \(C=C(\gamma,T)>0\), since \(0<t\le T\). 

By~\eqref{LowerBoundGroundState},
there exists \(C=C(\gamma,T)>0\) such that
\begin{align}
\frac1{\psi_\gamma(y)}
\le
\frac{C}{1+\log^+\frac1{|y|}},
\qquad
0<|y|\le\sqrt{2T}.
\end{align}
Indeed, if \(\sqrt{2T}\le1/2\), then the estimate follows directly from the first part of~\eqref{LowerBoundGroundState}. If \(\sqrt{2T} > 1/2\), then for \(1/2<|y|\le\sqrt{2T}\), the second part of~\eqref{LowerBoundGroundState} yields a uniform bound depending only on \(\gamma\) and \(T\), while \(1+\log^+\frac1{|y|}\) is bounded above by \(1+\log2\). Enlarging the constant if necessary gives the desired estimate.

Similarly, using~\eqref{BoundGroundStateDrift2d} and considering the cases
\(\sqrt{2T}\le 1/2\), \(1/2<\sqrt{2T}\le1\), and \(\sqrt{2T}>1\), we obtain
\begin{align}\label{DriftBoundOnBoundedRegion}
|b^\gamma(y)|
\le
\frac{C}{
|y|\left(
1+\log^+\frac1{|y|}
\right)},
\qquad
0<|y|\le\sqrt{2T}.
\end{align}
Therefore, if \(|y|^2<2t\), then \(|y|\le\sqrt{2T}\), and hence~\eqref{BoundGaussianBesselConvolution}--\eqref{DriftBoundOnBoundedRegion} give
\begin{align}\label{CombinedGroundStateInnerBound}
\frac{
\bigl|(g_t*\psi_\gamma)(y)b^\gamma(y)\bigr|^2
}{
\psi_\gamma(y)
}
\le
C
\frac{
\left(
1+\log^+\frac1{\gamma t}
\right)^2
}{
|y|^2
\left(
1+\log^+\frac1{|y|}
\right)^3
}.
\end{align}

We next consider the region \(|y|^2\ge2t\). Since $(g_t*\psi_\gamma)(y)=e^{\gamma t}\psi_\gamma(y)S_t^\gamma(y)$ by~\eqref{Sfunction} and \(0\le S_t^\gamma(y)\le1\), we have the first inequality below:
\begin{align}\label{GroundStateDriftProductBound}
\frac{
\bigl|(g_t*\psi_\gamma)(y)b^\gamma(y)\bigr|^2
}{
\psi_\gamma(y)
}
\le
e^{2\gamma T}
\psi_\gamma(y)|b^\gamma(y)|^2
\le
\frac{C}{
|y|^2\left(1+\log\dfrac1{|y|}\right)
}
\mathbf 1_{\{|y|\le1/2\}}
+
\frac{C}{|y|^2}
\mathbf 1_{\{|y|>1/2\}},
\end{align}
for some constant \(C=C(\gamma,T)>0\). For the second inequality, observe that for \(0<|y|\le1/2\), the desired bound follows directly from~\eqref{BoundGroundStateDrift2d} and~\eqref{UpperBoundGroundState}, whereas for
\(|y|>1/2\), again using~\eqref{BoundGroundStateDrift2d} and~\eqref{UpperBoundGroundState}, we obtain
\begin{align}
\psi_\gamma(y)|b^\gamma(y)|^2
\le
\frac{C}{\sqrt{|y|}}e^{-\sqrt{2\gamma}|y|}
=
\frac{C}{|y|^2}
\left(
|y|^{3/2}e^{-\sqrt{2\gamma}|y|}
\right)
\le
\frac{C}{|y|^2}
\sup_{s>1/2}
s^{3/2}e^{-\sqrt{2\gamma}s}
\le
\frac{C}{|y|^2}, \nonumber
\end{align}
where the last inequality follows since
\(s\mapsto s^{3/2}e^{-\sqrt{2\gamma}s}\) is bounded on
\((1/2,\infty)\). By considering separately the cases
\(0<|y|\le1/2\), \(1/2<|y|\le1\), and \(|y|>1\),
and increasing the constant if necessary,
\eqref{GroundStateDriftProductBound} yields
\begin{align}\label{CombinedGroundStateOuterBound}
\frac{
\bigl|(g_t*\psi_\gamma)(y)b^\gamma(y)\bigr|^2
}{
\psi_\gamma(y)
}
\le
\frac{C}{
|y|^2
\left(
1+\log^+\frac1{|y|}
\right)
},
\qquad
|y|^2\ge2t.
\end{align}
Combining~\eqref{CombinedGroundStateInnerBound} and
\eqref{CombinedGroundStateOuterBound} proves
\eqref{CombinedGroundStateBound}.
\end{proof}

We now prove Lemma~\ref{LemSquareIntegrability} in the case \(d=2\).
Before proceeding with the proof, we recall an estimate for the function \(h_s^\gamma\) defined in~\eqref{DefPerturbedKernel}. Fix \(L=T\gamma\). Since \(s\gamma\le T\gamma=L\) for every \(0<s<T\), it follows from~\cite[Prop.~8.17]{CM} that there exists a constant
\(C=C(\gamma,T)>0\) such that, for all \(x,y\in\R^2\) and \(0<s<T\),
\begin{align}\label{2dPointInteractionTerm}
h_s^\gamma(x,y)
\leq
\frac{C}{s}
\frac{1}{1+\log^+\frac{1}{s\gamma}}
\left(
e^{-\frac{|x|^2}{2s}}
+
\log^+\frac{2s}{|x|^2}
\right)
\left(
e^{-\frac{|y|^2}{2s}}
+
\log^+\frac{2s}{|y|^2}
\right).
\end{align}
For simplicity, we adopt the following convention throughout the proof.
\begin{notation}
Throughout the proof, \(T>0\), \(\gamma>0\), and
\(x\in\mathbb R^2\setminus\{0\}\) are fixed. For nonnegative quantities \(f\) and \(g\), we write \(f\preceq g\) (respectively,
\(f\succeq g\)) if there exists a constant \(C=C(\gamma,T,x)>0\) such that \(f\le Cg\) (respectively, \(f\ge Cg\)).
\end{notation}

\begin{proof}[Proof of Lemma~\ref{LemSquareIntegrability} for $d=2$]
Fix \(T,\gamma>0\) and \(x\in\mathbb R^2\setminus\{0\}\). Using~\eqref{DefTransDensity},~\eqref{Drfitfunction}, and~\eqref{Sfunction}, we may write the inner integral appearing in
Lemma~\ref{LemSquareIntegrability} as
\begin{align*}
\int_{\R^2}
q_s^\gamma(x,y)\,
\bigl|S_{T-s}^{\gamma}(y)b^\gamma(y)\bigr|^2\,dy
&=
e^{-\gamma(2T-s)}
\frac{1}{\psi_\gamma(x)}
\int_{\R^2}
p_s^\gamma(x,y)\,
\frac{
\bigl|(g_{T-s}*\psi_\gamma)(y)b^\gamma(y)\bigr|^2
}{
\psi_\gamma(y)
}
\,dy \\
&=
e^{-\gamma(2T-s)}
\frac{1}{\psi_\gamma(x)}
\int_{\R^2}
g_s(x,y)
\frac{
\bigl|(g_{T-s}*\psi_\gamma)(y)b^\gamma(y)\bigr|^2
}{
\psi_\gamma(y)
}
\,dy
\\
&\qquad
+
e^{-\gamma(2T-s)}
\frac{1}{\psi_\gamma(x)}
\int_{\R^2}
h_s^\gamma(x,y)
\frac{
\bigl|(g_{T-s}*\psi_\gamma)(y)b^\gamma(y)\bigr|^2
}{
\psi_\gamma(y)
}
\,dy,
\end{align*}
where the second equality uses~\eqref{FundamentalSolution}. We denote the last terms above by \(J_s^{T,\gamma}(x)\) and \(K_s^{T,\gamma}(x)\), respectively. In the analysis below, we will first consider the term \(K_s^{T,\gamma}(x)\) and show that $\int_0^T K_s^{T,\gamma}(x)\,ds<\infty$. 

It follows from~\eqref{2dPointInteractionTerm} that, for \(0<s< T\),
\begin{align} \label{KayXGroundState}
K_s^{T,\gamma}(x)
\leq\;&
\frac{
C(\gamma,T) e^{-\gamma(2T-s)}
}{
s\psi_\gamma(x)
\left(
1+\log^+\frac{1}{s\gamma}
\right)
}
\left(
e^{-\frac{|x|^2}{2s}}
+
\log^+\frac{2s}{|x|^2}
\right)
\int_{\R^2}
\left(
e^{-\frac{|y|^2}{2s}}
+
\log^+\frac{2s}{|y|^2}
\right)
\frac{
\bigl|(g_{T-s}*\psi_\gamma)(y)b^\gamma(y)\bigr|^2
}{
\psi_\gamma(y)
}
\,dy
\nonumber\\
\preceq\;&
\frac{1}{
s\left(
1+\log^+\frac{1}{s\gamma}
\right)
}
\int_{\R^2}
\left(
e^{-\frac{|y|^2}{2s}}
+
\log^+\frac{2s}{|y|^2}
\right)
\frac{
\bigl|(g_{T-s}*\psi_\gamma)(y)b^\gamma(y)\bigr|^2
}{
\psi_\gamma(y)
}
\,dy,
\end{align}
where the second inequality follows from the definition of the notation \(\preceq\), since \(T,\gamma>0\) and \(x\in\R^2\setminus\{0\}\) are fixed, and \(0<s< T\). Applying~\eqref{CombinedGroundStateBound}
in~\eqref{KayXGroundState} yields that $K_s^{T,\gamma}(x)
\preceq
\widehat K_s^{T,\gamma}
+
\check K_s^{T,\gamma}$, where
\begin{align*}
\widehat K_s^{T,\gamma}
:={}&
\frac{1}{
s\left(
1+\log^+\frac1{s\gamma}
\right)
}
\int_{|y|^2<2(T-s)}
\frac{
\left(
e^{-\frac{|y|^2}{2s}}
+
\log^+\frac{2s}{|y|^2}
\right)
\left(
1+\log^+\frac1{\gamma(T-s)}
\right)^2
}{
|y|^2
\left(
1+\log^+\frac1{|y|}
\right)^3
}
\,dy,
\\
\check K_s^{T,\gamma}
:={}&
\frac{1}{
s\left(
1+\log^+\frac1{s\gamma}
\right)
}
\int_{|y|^2\ge2(T-s)}
\frac{
e^{-\frac{|y|^2}{2s}}
+
\log^+\frac{2s}{|y|^2}
}{
|y|^2
\left(
1+\log^+\frac1{|y|}
\right)
}
\,dy.
\end{align*}
Through going to the radial variable $R=\frac{|y|^2}{2(T-s)}$ we can express \(\widehat K_s^{T,\gamma}\) and
\(\check K_s^{T,\gamma}\) as
\begin{align}
\widehat K_s^{T,\gamma}
={}&
\frac{\pi
\left(
1+\log^+\frac1{\gamma(T-s)}
\right)^2
}{
s\left(
1+\log^+\frac1{s\gamma}
\right)
}
\int_0^1
\frac{
e^{-\frac{T-s}{s}R}
+
\log^+\frac{s}{(T-s)R}
}{
R
\left(
1+\log^+\frac1{\sqrt{2(T-s)R}}
\right)^3
}
\,dR, \nonumber \\
\check K_s^{T,\gamma}
={}&
\frac{\pi}{
s\left(
1+\log^+\frac1{s\gamma}
\right)
}
\int_1^\infty
\frac{
e^{-\frac{T-s}{s}R}
+
\log^+\frac{s}{(T-s)R}
}{
R
\left(
1+\log^+\frac1{\sqrt{2(T-s)R}}
\right)
}
\,dR. \label{SecondK}
\end{align}

The integral of \(\widehat K_s^{T,\gamma}\) over \(s\in[0,T]\) has the
following expression:
\begin{align}\label{Yabuz3GroundState}
\int_0^T \widehat K_s^{T,\gamma}\,ds
={}&
\pi\int_0^T
\int_0^1
\frac{
\left(
e^{-\frac{T-s}{s}R}
+
\log^+\frac{s}{(T-s)R}
\right)
\left(
1+\log^+\frac{1}{\gamma(T-s)}
\right)^2
}{
s
\left(
1+\log^+\frac{1}{s\gamma}
\right)
R
\left(
1+\log^+\frac{1}{\sqrt{2(T-s)R}}
\right)^3
}
\,dR\,ds
\nonumber\\
={}&
\pi\int_0^1
\frac{1}{R}
\int_0^1
\frac{
\left(
e^{-\frac{1-a}{a}R}
+
\log^+\frac{a}{(1-a)R}
\right)
\left(
1+\log^+\frac{1}{\gamma T(1-a)}
\right)^2
}{
a
\left(
1+\log^+\frac{1}{\gamma Ta}
\right)
\left(
1+\log^+\frac{1}{\sqrt{2T(1-a)R}}
\right)^3
}
\,da\,dR.
\end{align}
The second equality changes to the integration variable
\(a=\frac{s}{T}\) and swaps the order of integration. We denote the
inner integral on the last line of~\eqref{Yabuz3GroundState} by
\(\mathrm{(A)}\). We will show below that
\begin{align}\label{PreLookGroundState}
\mathrm{(A)}
\preceq
\frac{
1+\log\left(
1+\log\frac{1}{R}
\right)
}{
\left(
1+\log\frac{1}{R}
\right)^2
},
\qquad 0<R<1.
\end{align}
Notice that applying~\eqref{PreLookGroundState}
in~\eqref{Yabuz3GroundState} gives
\begin{align*}
\int_0^T \widehat K_s^{T,\gamma}\,ds
&\preceq
\int_0^1
\frac{
1+\log\left(
1+\log\frac{1}{R}
\right)
}{
R
\left(
1+\log\frac{1}{R}
\right)^2
}
\,dR
=
\int_1^\infty \frac{1+\log u}{u^2}\,du=2<\infty,
\end{align*}
where the first equality follows from the change of variables $u=1+\log R^{-1}$.

Towards proving~\eqref{PreLookGroundState}, we split
\(\mathrm{(A)}\) over \(a\in(0,1/2)\) and
\(a\in[1/2,1)\). For \(0<a\le1/2\), we have
\(1/2\le1-a\le1\), and therefore
\begin{align*}
1+\log^+\frac{1}{\gamma T(1-a)}
\preceq 1,
\quad \textup{and} \quad
1+\log^+\frac{1}{\sqrt{2T(1-a)R}}
\succeq
1+\log\frac{1}{R}.
\end{align*}
Moreover, since \(1-a\ge \frac12\), we have $e^{-\frac{1-a}{a}R}
\le
e^{-\frac{R}{2a}}$ and $\log^+\frac{a}{(1-a)R}
\le
\log^+\frac{2a}{R}$. It follows that
\begin{align}\label{AGroundStateFirstHalf}
\int_0^{\frac12}
&\frac{
\left(
e^{-\frac{1-a}{a}R}
+
\log^+\frac{a}{(1-a)R}
\right)
\left(
1+\log^+\frac{1}{\gamma T(1-a)}
\right)^2
}{
a
\left(
1+\log^+\frac{1}{\gamma Ta}
\right)
\left(
1+\log^+\frac{1}{\sqrt{2T(1-a)R}}
\right)^3
}
\,da \nonumber \\
&\preceq
\frac{1}{
\left(
1+\log\frac{1}{R}
\right)^3
}
\Bigg(
\int_0^{\frac12}
\frac{e^{-\frac{R}{2a}}}{a}\,da
+
\int_0^{\frac12}
\frac{
\log^+\frac{2a}{R}
}{
a
\left(
1+\log^+\frac{1}{\gamma Ta}
\right)
}
\,da
\Bigg).
\end{align}
For the second integral on the right-hand side, we observe that
\begin{align}
\int_0^{\frac12}
\frac{
\log^+\frac{2a}{R}
}{
a
\left(
1+\log^+\frac{1}{\gamma Ta}
\right)
}
\,da
={}&
\int_{\frac{R}{2}}^{\frac12}
\frac{
\log\frac{2a}{R}
}{
a
\left(
1+\log^+\frac{1}{\gamma Ta}
\right)
}
\,da
\nonumber\\
\leq{}&
\log\frac{1}{R}
\int_{\frac{R}{2}}^{\frac12}
\frac{1}{
a
\left(
1+\log^+\frac{1}{\gamma Ta}
\right)
}
\,da
\preceq{}
\left(
\log\frac{1}{R}
\right)
\left(
1+
\log\left(
1+\log\frac{1}{R}
\right)
\right). \nonumber
\end{align}
The last inequality follows from $\int
\frac{1}{
a\left(1+\log\frac{1}{a}\right)
}
\,da
=
-\log\left(1+\log\frac{1}{a}\right)$. Also,
\begin{align*}
\int_0^{\frac12}
\frac{e^{-\frac{R}{2a}}}{a}\,da
&\leq
\int_0^1
\frac{e^{-\frac{R}{2a}}}{a}\,da
=
E\left(\frac{R}{2}\right)
\preceq
1+\log\frac{1}{R},
\end{align*}
where \(E\) is the exponential integral defined by $E(x)
:=
\int_0^1
\frac{e^{-x/a}}{a}\,da$ for $x>0$
and satisfies $E(x)\preceq 1+\log\frac1x$ for $0<x\le1$. Substituting the above observations into~\eqref{AGroundStateFirstHalf}, we obtain
\begin{align}\label{AGroundStateFirstHalfBound}
&\int_0^{\frac12}
\frac{
\left(
e^{-\frac{1-a}{a}R}
+
\log^+\frac{a}{(1-a)R}
\right)
\left(
1+\log^+\frac{1}{\gamma T(1-a)}
\right)^2
}{
a
\left(
1+\log^+\frac{1}{\gamma Ta}
\right)
\left(
1+\log^+\frac{1}{\sqrt{2T(1-a)R}}
\right)^3
}
\,da
\preceq
\frac{
1+\log\left(
1+\log\frac{1}{R}
\right)
}{
\left(
1+\log\frac{1}{R}
\right)^2
},
\end{align}
where, in the last step, we used that both $1+L$ and $L\bigl(1+\log(1+L)\bigr)$ are bounded above by $(1+L)\bigl(1+\log(1+L)\bigr)$, with $L:=\log\frac1R\ge0$, since $0<R<1$.

We next consider the integral over \(a\in[\frac12,1)\). Since
\(a\ge\frac12\), we have $
a^{-1}\big(
1+\log^+\frac{1}{\gamma Ta}
\big)^{-1}
\preceq 1$. We first suppose that \(0<R\le\frac12\) and split the integral over $\frac12\le a\le1-R$ and $1-R<a<1$. When \(\frac12\le a\le1-R\), we have
\begin{align*}
e^{-\frac{1-a}{a}R}
+
\log^+\frac{a}{(1-a)R}
&\preceq
1+\log\frac{1}{R},
\\
1+\log^+\frac{1}{\gamma T(1-a)}
&\preceq
1+\log\frac{1}{1-a},
\\
1+\log^+\frac{1}{\sqrt{2T(1-a)R}}
&\succeq
1+\log\frac{1}{R},
\end{align*}
where the first inequality uses that \(1-a\ge R\) and the next two inequalities use that \(1-a\le 1/2\). It follows that
\begin{align}\label{AGroundStateSecondHalfOne}
\int_{\frac12}^{1-R}
\frac{
\left(
e^{-\frac{1-a}{a}R}
+
\log^+\frac{a}{(1-a)R}
\right)
\left(
1+\log^+\frac{1}{\gamma T(1-a)}
\right)^2
}{
a
\left(
1+\log^+\frac{1}{\gamma Ta}
\right)
\left(
1+\log^+\frac{1}{\sqrt{2T(1-a)R}}
\right)^3
}
\,da
&\preceq
\frac{1}{
\left(
1+\log\frac{1}{R}
\right)^2
}
\int_{\frac12}^{1}
\left(
1+\log\frac{1}{1-a}
\right)^2
\,da
\nonumber\\
&=
\frac{1}{
\left(
1+\log\frac{1}{R}
\right)^2
}
\int_{\log 2}^{\infty}
(1+u)^2e^{-u}
\,du
\nonumber\\
&\preceq
\frac{1}{
\left(
1+\log\frac{1}{R}
\right)^2
}.
\end{align}
Here, the equality follows by first using the change of variables \(r=1-a\) and then \(u=\log(1/r)\); the resulting integral is finite.

On the other hand, when \(1-R<a<1\), we have
\begin{align*}
e^{-\frac{1-a}{a}R}
+
\log^+\frac{a}{(1-a)R}
&\preceq
1+\log\frac{1}{(1-a)R},
\\
1+\log^+\frac{1}{\gamma T(1-a)}
&\preceq
1+\log\frac{1}{1-a}
\le
1+\log\frac{1}{(1-a)R},
\\
1+\log^+\frac{1}{\sqrt{2T(1-a)R}}
&\succeq
1+\log\frac{1}{(1-a)R}.
\end{align*}
Consequently,
\begin{align}\label{AGroundStateSecondHalfTwo}
&\int_{1-R}^{1}
\frac{
\left(
e^{-\frac{1-a}{a}R}
+
\log^+\frac{a}{(1-a)R}
\right)
\left(
1+\log^+\frac{1}{\gamma T(1-a)}
\right)^2
}{
a
\left(
1+\log^+\frac{1}{\gamma Ta}
\right)
\left(
1+\log^+\frac{1}{\sqrt{2T(1-a)R}}
\right)^3
}
\,da
\preceq
\int_{1-R}^{1}1\,da
=
R
\preceq
\frac{1}{
\left(
1+\log\frac{1}{R}
\right)^2
}.
\end{align}
The last inequality uses that the function $R\mapsto R\left(1+\log\frac1R\right)^2$ is bounded on $(0,1)$, since it is continuous on $(0,1)$ and tends to $0$ as $R\to0$. 

We now suppose that \(\frac12<R<1\). In this case \(1-R<\frac12\),
so that \(a>1-R\) for every \(a\in[\frac12,1)\). The preceding
estimates (the estimates above~\eqref{AGroundStateSecondHalfTwo}) therefore give
\begin{align}\label{AGroundStateSecondHalfLargeR}
&\int_{\frac12}^{1}
\frac{
\left(
e^{-\frac{1-a}{a}R}
+
\log^+\frac{a}{(1-a)R}
\right)
\left(
1+\log^+\frac{1}{\gamma T(1-a)}
\right)^2
}{
a
\left(
1+\log^+\frac{1}{\gamma Ta}
\right)
\left(
1+\log^+\frac{1}{\sqrt{2T(1-a)R}}
\right)^3
}
\,da
\preceq
\int_{\frac12}^{1}1\,da
\preceq
\frac{1}{
\left(
1+\log\frac{1}{R}
\right)^2
},
\end{align}
where the last inequality follows since $1
\le
1+\log\frac{1}{R}
\le
1+\log 2$ for $1/2<R<1$. Combining~\eqref{AGroundStateFirstHalfBound} with
\eqref{AGroundStateSecondHalfOne}--\eqref{AGroundStateSecondHalfLargeR},
and using that \(1\le 1+\log\bigl(1+\log\frac1R\bigr)\) for \(0<R<1\),
we obtain~\eqref{PreLookGroundState}.

It remains to estimate \(\int_0^T \check K_s^{T,\gamma}\,ds\), which, by~\eqref{SecondK}, can be written as
\begin{align}
\int_0^T \check K_s^{T,\gamma}\,ds
&=
\int_0^T\int_1^\infty
\frac{\pi}{
s\left(
1+\log^+\frac1{s\gamma}
\right)
}
\frac{
e^{-\frac{T-s}{s}R}
+
\log^+\frac{s}{(T-s)R}
}{
R
\left(
1+\log^+\frac1{\sqrt{2(T-s)R}}
\right)
}
\,dR\,ds
\nonumber\\
&=
\pi
\int_1^\infty
\frac{1}{R}
\int_0^1
\frac{
e^{-\frac{1-a}{a}R}
+
\log^+\frac{a}{(1-a)R}
}{
a\left(
1+\log^+\frac{1}{\gamma Ta}
\right)
\left(
1+\log^+\frac{1}{\sqrt{2T(1-a)R}}
\right)
}
\,da\,dR
\nonumber\\
&\le
\pi
\int_1^\infty
\frac{1}{R}
\int_0^1
\left(
e^{-\frac{1-a}{a}R}
+
\log^+\frac{a}{(1-a)R}
\right)
\frac{da}{a}
\,dR, \nonumber
\end{align}
where the second equality follows by changing variables to \(a=\frac{s}{T}\) and swapping the order of integration, while the inequality follows from the elementary bound \(1+\log^+L\ge1\). For the exponential term, using the change of variables
\(u=\frac{1-a}{a}\), we have
\begin{align*}
\int_0^1
e^{-\frac{1-a}{a}R}
\frac{da}{a}
&=
\int_0^\infty
\frac{e^{-Ru}}{1+u}
\,du
\leq
\int_0^\infty
e^{-Ru}
\,du
=
\frac{1}{R}.
\end{align*}
Similarly, using the change of variables \(u=\frac{1-a}{a}\), we have
\begin{align*}
\int_0^1
\log^+\frac{a}{(1-a)R}
\frac{da}{a}
=
\int_0^\infty
\frac{
\log^+\frac{1}{Ru}
}{
1+u
}
\,du
=
\int_0^{1/R}
\frac{
\log\frac{1}{Ru}
}{
1+u
}
\,du
\leq
\int_0^{1/R}
\log\frac{1}{Ru}
\,du
=
\frac{1}{R}.
\end{align*}
Therefore, $\int_0^T \check K_s^{T,\gamma}\,ds
\preceq
\int_1^\infty
\frac{1}{R^2}
\,dR
<\infty$. Combining this with the estimate for
\(\widehat K_s^{T,\gamma}\), we conclude that $\int_0^T K_s^{T,\gamma}\,ds<\infty$.

For the Gaussian part, using $g_s(x,y)
=
\frac{1}{2\pi s}
e^{-\frac{|x-y|^2}{2s}}$, we have, for \(0<s<T\),
\begin{align}\label{JayXGroundState}
J_s^{T,\gamma}(x)
\preceq
\frac{1}{s}
\int_{\R^2}
e^{-\frac{|x-y|^2}{2s}}
\frac{
\bigl|(g_{T-s}*\psi_\gamma)(y)b^\gamma(y)\bigr|^2
}{
\psi_\gamma(y)
}
\,dy.
\end{align}
Next, we split the
\(y\)-integration in~\eqref{JayXGroundState} over $\big\{|y|<|x|/2\big\}$ and $\big\{|y|\ge |x|/2\big\}$. Accordingly, applying~\eqref{CombinedGroundStateBound}, we write $J_s^{T,\gamma}(x)
\preceq
J_{s,1}^{T,\gamma}(x)
+
J_{s,2}^{T,\gamma}(x)$, where
\begin{align*}
J_{s,1}^{T,\gamma}(x)
:={}&
\frac1s
\int_{|y|<|x|/2}
e^{-\frac{|x-y|^2}{2s}}
\Bigg(
\frac{
\left(
1+\log^+\frac1{\gamma(T-s)}
\right)^2
}{
|y|^2
\left(
1+\log^+\frac1{|y|}
\right)^3
}
\mathbf 1_{\{|y|^2<2(T-s)\}}
+
\frac{
1
}{
|y|^2
\left(
1+\log^+\frac1{|y|}
\right)
}
\mathbf 1_{\{|y|^2\ge2(T-s)\}}
\Bigg)
\,dy,
\\
J_{s,2}^{T,\gamma}(x)
:={}&
\frac1s
\int_{|y|\ge |x|/2}
e^{-\frac{|x-y|^2}{2s}}
\Bigg(
\frac{
\left(
1+\log^+\frac1{\gamma(T-s)}
\right)^2
}{
|y|^2
\left(
1+\log^+\frac1{|y|}
\right)^3
}
\mathbf 1_{\{|y|^2<2(T-s)\}}
+
\frac{
1
}{
|y|^2
\left(
1+\log^+\frac1{|y|}
\right)
}
\mathbf 1_{\{|y|^2\ge2(T-s)\}}
\Bigg)
\,dy.
\end{align*}
For \(J_{s,2}^{T,\gamma}(x)\), using that
\(|y|\ge |x|/2\) on the domain of integration and the elementary bound
\(1+a\ge1\) for \(a\ge0\), we obtain
\begin{align*}
J_{s,2}^{T,\gamma}(x)
&\preceq
\frac1{s|x|^2}
\left[
\left(
1+\log^+\frac1{\gamma(T-s)}
\right)^2
\mathbf 1_{\{|x|^2<8(T-s)\}}
+1
\right]
\int_{\mathbb R^2}
e^{-\frac{|x-y|^2}{2s}}
\,dy
\preceq
1,
\end{align*}
where the second inequality uses that
\(\int_{\mathbb R^2}e^{-\frac{|x-y|^2}{2s}}\,dy=2\pi s\) and that
\(\log^+\frac1{\gamma(T-s)}
\le
\log^+\frac8{\gamma|x|^2} \preceq 1\) whenever
\(|x|^2<8(T-s)\). Consequently,
\(\int_0^T J_{s,2}^{T,\gamma}(x)\,ds<\infty\).

We now estimate \(J_{s,1}^{T,\gamma}(x)\). If
\(|y|<|x|/2\), then $|x-y|
\ge
|x|-|y|
>
|x|/2$ and therefore passing to polar coordinates gives
\begin{align}\label{FirstJGroundState}
J_{s,1}^{T,\gamma}(x)
\preceq{}&
\frac{e^{-\frac{|x|^2}{8s}}}{s}
\Bigg[
\left(
1+\log^+\frac1{\gamma(T-s)}
\right)^2
\int_0^{\min\{|x|/2,\sqrt{2(T-s)}\}}
\frac{
dr
}{
r\left(
1+\log^+\frac1r
\right)^3
}
\nonumber\\
&\hspace{24mm}
+
\int_{\min\{|x|/2,\sqrt{2(T-s)}\}}^{|x|/2}
\frac{
dr
}{
r\left(
1+\log^+\frac1r
\right)
}
\Bigg],
\end{align}
where the second integral is understood to be zero whenever
\(\sqrt{2(T-s)}\ge |x|/2\). Denote the first and second terms inside
the brackets in~\eqref{FirstJGroundState} by
\(\widehat J_{s,1}^{T,\gamma}(x)\) and
\(\check J_{s,1}^{T,\gamma}(x)\), respectively. Then we have
\begin{align*}
\widehat J_{s,1}^{T,\gamma}(x)
\le
\left(
1+\log^+\frac1{\gamma(T-s)}
\right)^2
\int_0^{\sqrt{2(T-s)}}
\frac{
dr
}{
r\left(
1+\log^+\frac1r
\right)^3
}
\preceq 1,
\end{align*}
where the last inequality follows since if \(\sqrt{2(T-s)}\le1\), then \(\log^+\frac1r=\log\frac1r\) for
\(0<r\le\sqrt{2(T-s)}\), and
\begin{align*}
\int_0^{\sqrt{2(T-s)}}
\frac{
dr
}{
r\left(
1+\log\frac1r
\right)^3
}
=
\frac{1}{
2\left(
1+\log\frac1{\sqrt{2(T-s)}}
\right)^2
}\,,
\quad \textup{and} \quad
1+\log^+\frac1{\gamma(T-s)}
\preceq
1+\log\frac1{\sqrt{2(T-s)}}.
\end{align*}
On the other hand, if \(\sqrt{2(T-s)}>1\), then we have
\[
1+\log^+\frac1{\gamma(T-s)}
\preceq 1,
\quad \textup{and} \quad
\int_0^{\sqrt{2(T-s)}}
\frac{
dr
}{
r\left(
1+\log^+\frac1r
\right)^3
}
\le
\int_0^{\sqrt{2T}}
\frac{
dr
}{
r\left(
1+\log^+\frac1r
\right)^3
}
<\infty,
\]
where the finiteness of the latter integral follows by splitting it over \((0,1)\) and \((1,\sqrt{2T})\), yielding an integrable logarithmic singularity near the origin and a finite logarithmic integral away from the origin.

For \(\check J_{s,1}^{T,\gamma}(x)\), there is nothing to prove when
\(\sqrt{2(T-s)}\ge |x|/2\), since in this case
\(\check J_{s,1}^{T,\gamma}(x)=0\). On the other hand, if
\(\sqrt{2(T-s)}<|x|/2\), then
\begin{align*}
\check J_{s,1}^{T,\gamma}(x)
=
\int_{\sqrt{2(T-s)}}^{|x|/2}
\frac{
dr
}{
r\left(
1+\log^+\frac1r
\right)
}
\le
\int_{\sqrt{2(T-s)}}^{|x|}
\frac{dr}{r}
=
\log\frac{|x|}{\sqrt{2(T-s)}}.
\end{align*}
Consequently, in both cases, we have
\(\check J_{s,1}^{T,\gamma}(x)
\le
\log^+\frac{|x|}{\sqrt{2(T-s)}}\).
Therefore, combining this estimate with
\(\widehat J_{s,1}^{T,\gamma}(x)\preceq1\), and integrating over \(s\in(0,T)\), we obtain from~\eqref{FirstJGroundState}
\[
\int_0^T J_{s,1}^{T,\gamma}(x)\,ds
\preceq
\int_0^{T}
\frac{e^{-\frac{|x|^2}{8s}}}{s}
\left(
1+\log^+\frac{|x|}{\sqrt{2(T-s)}}
\right)\,ds
<\infty.
\]
Indeed, after splitting the integral at \(T/2\), the exponential factor \(e^{-|x|^2/(8s)}\) ensures integrability near \(s=0\), whereas near \(s=T\), the factor \(s^{-1}e^{-|x|^2/(8s)}\) is bounded and \(\log^+\!\bigl(|x|/\sqrt{2(T-s)}\bigr)\) is integrable. Consequently, \(\int_0^T J_s^{T,\gamma}(x)\,ds<\infty\), which completes the proof for \(S_{T-s}^{\gamma}(y)b^\gamma(y)\). By
Lemma~\ref{CorRegularizedDriftComparison}, the same estimate also
holds with \(S_{T-s}^{\gamma}(y)b^\gamma(y)\) replaced by \(S_{T-s}^{\gamma}(y)\widehat b_{T-s}^\gamma(y)\). \vspace{.2cm}

\noindent \textit{\textbf{The regularized drift case.}} It remains to prove the estimate for \(\widehat b_{T-s}^{\gamma}(y)\). Using the definitions of \(p^\gamma\) and \(q^\gamma\), we have, for \(0<s<T\),
\begin{align*}
\int_{\mathbb R^2}
q_s^\gamma(x,y)
\bigl|\widehat b_{T-s}^\gamma(y)\bigr|^2\,dy
=
\frac{e^{-\gamma s}}{\psi_\gamma(x)}
\int_{\mathbb R^2}
g_s(x,y)
\psi_\gamma(y)
\bigl|\widehat b_{T-s}^\gamma(y)\bigr|^2
\,dy
+
\frac{e^{-\gamma s}}{\psi_\gamma(x)}
\int_{\mathbb R^2}
h_s^\gamma(x,y)
\psi_\gamma(y)
\bigl|\widehat b_{T-s}^\gamma(y)\bigr|^2
\,dy.
\end{align*}
We denote the two terms on the right-hand side by
\(\widetilde J_s^{T,\gamma}(x)\) and
\(\widetilde K_s^{T,\gamma}(x)\), respectively. Combining~\eqref{UpperBoundGroundState} with~Lemma~\ref{LemmaRegularizedDriftBound2d}, we obtain, for all
\(0<u\le T\) and all \(y\in\mathbb R^2\setminus\{0\}\),
\begin{align}\label{RegularizedDriftWeightedBound}
\psi_\gamma(y)
\bigl|\widehat b_u^\gamma(y)\bigr|^2
\preceq{}&
\frac{
|y|^2
\left(
1+\log\frac1{|y|}
\right)
}{
u^2
\left(
1+\log^+\frac1{\gamma u}
\right)^2
}
\mathbf 1_{\left\{
0<|y|<
\min\{\sqrt{2u},1/2\}
\right\}}
+
\frac{
1
}{
|y|^2
\left(
1+\log\frac1{|y|}
\right)
}
\mathbf 1_{\left\{
\sqrt{2u}\le |y|\le1/2
\right\}}
\nonumber\\
&+
|y|^{-1/2}
e^{-\sqrt{2\gamma}|y|}
\mathbf 1_{\{|y|>1/2\}}.
\end{align}
Here and below, the second region is understood to be empty whenever
\(\sqrt{2u}>1/2\). Thus, using $g_s(x,y) = (2\pi s)^{-1} e^{-|x-y|^2/(2s)}$ together with the estimate~\eqref{RegularizedDriftWeightedBound}, we obtain
\begin{align}
\widetilde J_s^{T,\gamma}(x)
\preceq
\frac{1}{s}
\int_{\mathbb R^2}
e^{-\frac{|x-y|^2}{2s}}
\psi_\gamma(y)
\bigl|\widehat b_{T-s}^{\gamma}(y)\bigr|^2
\,dy
\preceq
\widetilde J_{s,1}^{T,\gamma}(x)
+
\widetilde J_{s,2}^{T,\gamma}(x)
+
\widetilde J_{s,3}^{T,\gamma}(x),
\label{RegularizedGaussianDecomposition}
\end{align}
where the three terms correspond to the three regions in
\eqref{RegularizedDriftWeightedBound} and are estimated below separately.

We first estimate
\(\widetilde J_{s,1}^{T,\gamma}(x)\) by considering the cases
\(0<s\le T/2\) and \(T/2<s<T\). Observe that, for
\(0<s\le T/2\), we have \(T/2\le T-s\le T\), and hence $(T-s) \big(
1+\log^+\frac1{\gamma(T-s)}
\big) \succeq 1 $. Moreover, the function
\(y\mapsto |y|^2\bigl(1+\log|y|^{-1}\bigr)\) is bounded on
\(0<|y|\le1/2\). Therefore,
\begin{align*}
\widetilde J_{s,1}^{T,\gamma}(x)
&:=
\frac{1}{
s(T-s)^2
\left(
1+\log^+\frac1{\gamma(T-s)}
\right)^2
}
\int_{\left\{
0<|y|<
\min\{\sqrt{2(T-s)},1/2\}
\right\}}
e^{-\frac{|x-y|^2}{2s}}
|y|^2
\left(
1+\log\frac1{|y|}
\right)
\,dy
\\
&\preceq
\frac1s
\int_{\mathbb R^2}
e^{-\frac{|x-y|^2}{2s}}
\,dy
\preceq
1,
\qquad
0<s\le\frac T2.
\end{align*}
On the other hand, for \(T/2<s<T\), we have \(s^{-1}\preceq1\).
Dropping the Gaussian factor and using polar coordinates, we obtain
\begin{align*}
\widetilde J_{s,1}^{T,\gamma}(x)
&\preceq
\frac{1}{
(T-s)^2
\left(
1+\log^+\frac1{\gamma(T-s)}
\right)^2
}
\int_0^{\min\{\sqrt{2(T-s)},1/2\}}
r^3
\left(
1+\log\frac1r
\right)
\,dr.
\end{align*}
For
\(a=\min\{\sqrt{2(T-s)},1/2\}\), integration by parts gives
\begin{align*}
\int_0^a
r^3\left(1+\log\frac1r\right)\,dr
=
\frac{a^4}{4}
+
\frac{a^4}{4}\log\frac1a
+
\frac{a^4}{16}
\preceq
a^4\left(1+\log\frac1a\right)
\preceq
(T-s)^2
\left(
1+\log^+\frac1{\gamma(T-s)}
\right),
\end{align*}
where the last inequality follows by splitting into the cases
\(a=\sqrt{2(T-s)}\) and \(a=1/2\): in the former case,
\(a^4\bigl(1+\log a^{-1}\bigr)\) is bounded by the right-hand side after enlarging the implicit constant, while in the latter case
\(1/8\le T-s\le T\), so
\(a^4\bigl(1+\log a^{-1}\bigr)\preceq1\), and the estimate follows immediately. Consequently,
\begin{align*}
\widetilde J_{s,1}^{T,\gamma}(x)
\preceq
\frac{1}{
\left(
1+\log^+\frac1{\gamma(T-s)}
\right)
}
\preceq 1,
\qquad T/2<s<T.
\end{align*}
Combining the above estimates over the two cases, we conclude that
\(\int_0^T \widetilde J_{s,1}^{T,\gamma}(x)\,ds<\infty\).

For \(\widetilde J_{s,2}^{T,\gamma}(x)\), it suffices to assume that
\(T-s\le1/8\), since otherwise the integration region is empty. If
\(0<s\le T/2\), then $\sqrt{T} \le |y| \le 1/2$ throughout the
integration region. Therefore,
\begin{align}
\widetilde J_{s,2}^{T,\gamma}(x)
:=
\frac{1}{s}
\int_{\left\{
\sqrt{2(T-s)}\le |y|\le1/2
\right\}}
\frac{
e^{-\frac{|x-y|^2}{2s}}
}{
|y|^2
\left(
1+\log\frac1{|y|}
\right)
}
\,dy
\le 
\frac{1}{T}
\int_{\mathbb R^2}
\frac{e^{-\frac{|x-y|^2}{2s}}}{s}
\,dy
\preceq
1, \nonumber
\end{align}
and hence $\int_0^{T/2}\widetilde J_{s,2}^{T,\gamma}(x)\,ds<\infty$.
Now suppose that \(T/2<s<T\). Then,
\begin{align*}
\widetilde J_{s,2}^{T,\gamma}(x)
\le
\frac{2}{T}
\int_{\left\{
\sqrt{2(T-s)}\le |y|\le1/2
\right\}}
\frac{dy}{|y|^2}
=
\frac{4\pi}{T}
\int_{\sqrt{2(T-s)}}^{1/2}
\frac{dr}{r}
=
\frac{4\pi}{T}
\bigg(\log\frac{1}{2} +
\log\frac{1}{\sqrt{2(T-s)}} \bigg)
\preceq 
\left(
1+\log\frac1{T-s}
\right).
\end{align*}
Consequently,
\begin{align*}
\int_{T/2}^T
\widetilde J_{s,2}^{T,\gamma}(x)\,ds
\preceq 
\int_{\max\left\{
T/2,T-1/8
\right\}}^T
\left(
1+\log\frac1{T-s}
\right)
\,ds
=
\int_0^{\min\left\{
T/2,1/8
\right\}}
\left(
1+\log\frac1u
\right)
\,du
<\infty.
\end{align*}
The above estimates yield \(\int_0^T \widetilde J_{s,2}^{T,\gamma}(x)\,ds<\infty\).

For \(\widetilde J_{s,3}^{T,\gamma}(x)\), since $|y|>\frac12$, we have
\begin{align*}
\widetilde J_{s,3}^{T,\gamma}(x)
:=
\frac{1}{s}
\int_{\{|y|>1/2\}}
e^{-\frac{|x-y|^2}{2s}}
|y|^{-1/2}
e^{-\sqrt{2\gamma}|y|}
\,dy
\le
\sqrt{2}
\int_{\mathbb R^2}
\frac{e^{-\frac{|x-y|^2}{2s}}}{s}
\,dy
\preceq
1.
\end{align*}
Consequently, $\int_0^T \widetilde J_{s,3}^{T,\gamma}(x)\,ds <\infty$.

We next estimate \(\widetilde K_s^{T,\gamma}(x)\). Since
\(s\gamma<T\gamma\) for every \(0<s<T\), applying~\eqref{2dPointInteractionTerm} with \(L=T\gamma\) yields the first inequality below.
\begin{align}
\widetilde K_s^{T,\gamma}(x)
&:=
\frac{e^{-\gamma s}}{\psi_\gamma(x)}
\int_{\mathbb R^2}
h_s^\gamma(x,y)
\psi_\gamma(y)
\bigl|\widehat b_{T-s}^\gamma(y)\bigr|^2
\,dy
\nonumber\\
&\preceq
\frac{
e^{-\frac{|x|^2}{2s}}
+
\log^+\frac{2s}{|x|^2}
}{
s
\left(
1+\log^+\frac1{s\gamma}
\right)
}
\int_{\mathbb R^2}
\left(
e^{-\frac{|y|^2}{2s}}
+
\log^+\frac{2s}{|y|^2}
\right)
\psi_\gamma(y)
\bigl|\widehat b_{T-s}^\gamma(y)\bigr|^2
\,dy
\nonumber\\
&\preceq
\widetilde K_{s,1}^{T,\gamma}(x)
+
\widetilde K_{s,2}^{T,\gamma}(x)
+
\widetilde K_{s,3}^{T,\gamma}(x), \nonumber
\end{align}
where the three terms on the right-hand side correspond, respectively, to the three regions appearing in~\eqref{RegularizedDriftWeightedBound}, and will be estimated separately below.

For the first term, set $a:=\min\{\sqrt{2(T-s)},1/2\}$, then
\begin{align} \label{KoneEstimate}
\widetilde K_{s,1}^{T,\gamma}(x)
&:={}
\frac{
e^{-\frac{|x|^2}{2s}}
+
\log^+\frac{2s}{|x|^2}
}{
s(T-s)^2
\left(
1+\log^+\frac1{s\gamma}
\right)
\left(
1+\log^+\frac1{\gamma(T-s)}
\right)^2
}
\int_{\{0<|y|<a\}}
\left(
e^{-\frac{|y|^2}{2s}}
+
\log^+\frac{2s}{|y|^2}
\right)
|y|^2
\left(
1+\log\frac1{|y|}
\right)
\,dy \nonumber\\
&\preceq
\frac{
e^{-\frac{|x|^2}{2s}}
+
\log^+\frac{2s}{|x|^2}
}{
s(T-s)^2
\left(
1+\log^+\frac1{s\gamma}
\right)
\left(
1+\log^+\frac1{\gamma(T-s)}
\right)^2
}
\int_0^{a}
r^3
\left(
1+\log\frac1r
\right)^2
\,dr,
\end{align}
where the inequality uses that
$e^{-\frac{|y|^2}{2s}}+\log^+\frac{2s}{|y|^2}
\preceq 1+\log\frac1{|y|}$ since \(0<|y|\le1/2\) and \(0<s<T\).
Using the change of variables \(r=au\), we obtain
\begin{align*}
\int_0^a
r^3
\left(
1+\log\frac1r
\right)^2
\,dr
&=
a^4
\int_0^1
u^3
\left(
1+\log\frac1a+\log\frac1u
\right)^2
\,du
\\
&\preceq
a^4
\left(
1+\log\frac1a
\right)^2
+
a^4
\\
&\preceq
a^4
\left(
1+\log\frac1a
\right)^2
\preceq
(T-s)^2
\left(
1+\log^+\frac1{\gamma(T-s)}
\right)^2.
\end{align*}
Here, the first inequality follows from
\((A+B)^2\le 2A^2+2B^2\), with
\(A=1+\log a^{-1}\) and \(B=\log u^{-1}\), while the second inequality
uses \(1+\log a^{-1}\ge1\) for \(0<a\le1/2\). The final inequality
follows as in \(\widetilde J_{s,1}^{T,\gamma}(x)\), by considering
separately the cases \(T-s\le1/8\) (for which
\(a=\sqrt{2(T-s)}\)) and \(T-s>1/8\) (for which \(a=1/2\)).
Using the above estimate and the fact that
$1+\log^+\frac1{s\gamma}\ge1$, we obtain from~\eqref{KoneEstimate} that
\begin{align}
\int_0^T
\widetilde K_{s,1}^{T,\gamma}(x)
\,ds
\preceq
\int_0^T
\frac{e^{-\frac{|x|^2}{2s}}}{s}
\,ds
+
\int_0^T
\frac{\log^+\frac{2s}{|x|^2}}{s}
\,ds
=
\int_{\frac{|x|^2}{2T}}^\infty
\frac{e^{-u}}{u}
\,du
+
\mathbf 1_{\{|x|^2<2T\}}
\int_{\frac{|x|^2}{2}}^T
\frac{\log\frac{2s}{|x|^2}}{s}
\,ds
<\infty.
\end{align}
Here, the equality uses the change of variables
\(u=|x|^2/(2s)\) in the first integral and the fact that
\(\log^+\frac{2s}{|x|^2}=0\) for \(0<s\le |x|^2/2\).
The finiteness follows since \(e^{-u}/u\) is integrable on
\(\bigl[|x|^2/(2T),\infty\bigr)\) for \(x\neq0\), while, when
\(|x|^2<2T\), the second integrand is continuous on the finite interval
\(\bigl[|x|^2/2,T\bigr]\).

For the second term, the region of integration is nonempty only when
\(T-s\le1/8\). Moreover, since \(0<|y|\le1/2\) and \(0<s<T\), we have
\(e^{-\frac{|y|^2}{2s}}
+
\log^+\frac{2s}{|y|^2}
\preceq
1+\log\frac1{|y|}\). Therefore, using polar coordinates, we obtain
\begin{align}
\widetilde K_{s,2}^{T,\gamma}(x)
&:=
\frac{
e^{-\frac{|x|^2}{2s}}
+
\log^+\frac{2s}{|x|^2}
}{
s
\left(
1+\log^+\frac1{s\gamma}
\right)
}
\int_{\left\{
\sqrt{2(T-s)}\le |y|\le1/2
\right\}}
\frac{
e^{-\frac{|y|^2}{2s}}
+
\log^+\frac{2s}{|y|^2}
}{
|y|^2
\left(
1+\log\frac1{|y|}
\right)
}
\,dy
\nonumber\\
&\preceq
\frac{
e^{-\frac{|x|^2}{2s}}
+
\log^+\frac{2s}{|x|^2}
}{s}
\int_{\sqrt{2(T-s)}}^{1/2}
\frac{dr}{r}\,
\mathbf 1_{\{T-s\le1/8\}}
\nonumber\\
&\preceq
\frac{
e^{-\frac{|x|^2}{2s}}
+
\log^+\frac{2s}{|x|^2}
}{s}
\left(
1+\log^+\frac1{T-s}
\right)
\mathbf 1_{\{T-s\le1/8\}}. \nonumber
\end{align}
Consequently, splitting the \(s\)-integral over
\((0,T/2]\) and \((T/2,T)\), we obtain
\begin{align}
\int_0^T
\widetilde K_{s,2}^{T,\gamma}(x)
\,ds
&\preceq
\int_0^{T/2}
\frac{
e^{-\frac{|x|^2}{2s}}
+
\log^+\frac{2s}{|x|^2}
}{
s
}
\,ds
+
\int_{T/2}^T
\left(
1+\log^+\frac1{T-s}
\right)
\,ds
<\infty. \nonumber
\end{align}
Here, the first integral is finite by the same argument used for
\(\widetilde K_{s,1}^{T,\gamma}(x)\), while the second is finite since \(\log^+(1/u)\) is integrable near \(u=0\).

For the third term, since \(0<s<T\) and \(|y|>1/2\), we have $e^{-\frac{|y|^2}{2s}}+\log^+\frac{2s}{|y|^2} \le 1+\log^+(8T)$. Therefore, using polar coordinates, we obtain
\begin{align}
\widetilde K_{s,3}^{T,\gamma}(x)
&:=
\frac{
e^{-\frac{|x|^2}{2s}}
+
\log^+\frac{2s}{|x|^2}
}{
s
\left(
1+\log^+\frac1{s\gamma}
\right)
}
\int_{\{|y|>1/2\}}
\left(
e^{-\frac{|y|^2}{2s}}
+
\log^+\frac{2s}{|y|^2}
\right)
|y|^{-1/2}
e^{-\sqrt{2\gamma}|y|}
\,dy
\nonumber\\
&\preceq
\frac{
e^{-\frac{|x|^2}{2s}}
+
\log^+\frac{2s}{|x|^2}
}{s}
\int_{1/2}^{\infty}
r^{1/2}
e^{-\sqrt{2\gamma}r}
\,dr
\preceq
\frac{
e^{-\frac{|x|^2}{2s}}
+
\log^+\frac{2s}{|x|^2}
}{s}. \nonumber
\end{align}
Here the finiteness of the radial integral follows by choosing \(R>1/2\)
sufficiently large so that
\(r^{1/2}\le e^{\frac{\sqrt{2\gamma}}{2}r}\) for all \(r\ge R\), and
then splitting the integral over \([1/2,R]\) and \([R,\infty)\). Consequently, $\int_0^T
\widetilde K_{s,3}^{T,\gamma}(x)
\,ds < \infty$ by the same argument used for
\(\widetilde K_{s,1}^{T,\gamma}(x)\).

Combining the above estimates with, we conclude that
\(\int_0^T\widetilde K_s^{T,\gamma}(x)\,ds<\infty\), which completes the proof.
\end{proof}

\subsection{Square integrability of the martingale: \texorpdfstring{$d=3$}{Lg} } \label{SqureIntegrability3d}

For each \(\gamma>0\) and each \(T>0\), there exists a constant \(C=C(\gamma,T)>0\) such that for all \(0<t\le T\) and all \(x,y\in\R^3\setminus\{0\}\),
\begin{align}\label{PtgammaBounds}
g_t(x-y)
\le
p_t^\gamma(x,y)
\le
g_t(x-y)
+
\frac{C}{\sqrt{t}}
\frac{1}{|x||y|}
e^{-\frac{|x|^2}{2t}}
e^{-\frac{|y|^2}{2t}} ,
\end{align}
where \(p_t^\gamma(x,y)\) is defined in~\eqref{FundamentalSolution} and \(g_t(x-y):= (2\pi t)^{-3/2} e^{-|x-y|^2/(2t)}\) denotes the three-dimensional free heat kernel. Indeed, this follows from~\cite[Lem.~8]{Fleischmann} using the kernel identification \(p_t^\gamma(x,y)=P^\alpha(t/2;x,y)\), together with the time rescaling \(t\mapsto t/2\) and the parameter identification
\(\gamma=-4\pi\alpha\).

We begin by proving the following auxiliary lemma that will be used in the proof of Lemma~\ref{LemSquareIntegrability}. 

\begin{lemma}
Let \(T,\gamma>0\). Then there exists a constant
\(C=C(\gamma,T)>0\) such that, for all \(0<t\le T\) and all
\(x\in\mathbb R^3\setminus\{0\}\),
\begin{align}\label{BoundProductSDriftBound}
\bigl|S_t^\gamma(x)b^\gamma(x)\bigr|
\le
C\left(
1+
\frac{1}{\sqrt t}\mathbf 1_{\{|x|\le \sqrt t\}}
+
\frac{1}{|x|}\mathbf 1_{\{|x|>\sqrt t\}}
\right).
\end{align}
\end{lemma}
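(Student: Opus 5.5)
We prove \eqref{BoundProductSDriftBound} by splitting according to whether $|x|>\sqrt t$ or $|x|\le\sqrt t$, using in each region the best available representation of the product $S_t^\gamma b^\gamma$.

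\emph{Region $|x|>\sqrt t$.} Here we use only $0\le S_t^\gamma(x)\le1$. The explicit formula \eqref{Drfitfunction} gives $|b^\gamma(x)|=\gamma+|x|^{-1}$, so
\[
|S_t^\gamma(x)b^\gamma(x)|\le \gamma+|x|^{-1}.
\]
This is already of the required form with $C=\max\{\gamma,1\}$. The bound $S_t^\gamma\le1$ follows from Corollary~\ref{CorGroundStateVisitation}(i) via Lemma~\ref{LemmaGaussianGroundStateConvolution}. It can also be seen directly: by Lemma~\ref{LemmaGaussianGroundStateConvolution}, $S_t^\gamma(x)=K_{1/2}(\gamma|x|,\gamma^2t/2)/K_{1/2}(\gamma|x|)$, and the incomplete Bessel integral \eqref{DefIncompleteBessel} is dominated by the complete one.

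\emph{Region $|x|\le\sqrt t$.} Here $|b^\gamma(x)|$ blows up like $|x|^{-1}$, so we must exploit the smallness of $S_t^\gamma$ near the origin. By \eqref{Sfunction},
\[
S_t^\gamma(x)|b^\gamma(x)| = e^{-\lambda_\gamma t}\,\frac{(g_t*\psi_\gamma)(x)}{\psi_\gamma(x)}\Bigl(\gamma+\frac1{|x|}\Bigr).
\]
Since $\psi_\gamma(x)=\sqrt{\gamma/(2\pi)}\,e^{-\gamma|x|}/|x|$, we have
\[
\frac{\gamma+|x|^{-1}}{\psi_\gamma(x)} = \sqrt{2\pi/\gamma}\,e^{\gamma|x|}(1+\gamma|x|).
\]
With $|x|\le\sqrt t\le\sqrt T$, this factor is bounded by $C(\gamma,T)$. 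Hence
\[
|S_t^\gamma b^\gamma|(x)\le C(\gamma,T)\,(g_t*\psi_\gamma)(x),
\]
and it remains to show $(g_t*\psi_\gamma)(x)\le C/\sqrt t$ uniformly for $|x|\le\sqrt t$, $0<t\le T$.

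This is the same computation as in the proof of Lemma~\ref{LemmaHatDriftBound}, now for $\psi_\gamma$ rather than $\psi_\gamma/|\cdot|$. Bound $\psi_\gamma(y)\le C|y|^{-1}$ and split the convolution over $|y|\le2\sqrt t$ and $|y|>2\sqrt t$.
\begin{enumerate}[(a)]
\item On $|y|\le2\sqrt t$, use $g_t\le Ct^{-3/2}$ together with $\int_{|y|\le2\sqrt t}|y|^{-1}dy=Ct$. This contributes $C/\sqrt t$.
\item On $|y|>2\sqrt t$, we have $|x-y|\ge|y|/2$, so this piece is at most $Ct^{-3/2}\int e^{-|y|^2/(8t)}|y|^{-1}dy=Ct^{-3/2}\cdot t=C/\sqrt t$.
\end{enumerate}
Alternatively, one can use the explicit formula: from Lemma~\ref{LemmaGaussianGroundStateConvolution} and \eqref{DefIncompleteBessel} with $\nu=1/2$,
\[
(g_t*\psi_\gamma)(x)\le C\int_{\gamma^2t/2}^\infty u^{-3/2}\,du=C/\sqrt t .
\]
Either route gives $|S_t^\gamma b^\gamma|(x)\le C/\sqrt t$ on this region.

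Combining the two regions yields \eqref{BoundProductSDriftBound}, with the constant $1$ absorbing $\gamma$. The only step requiring care is the uniform bound $(g_t*\psi_\gamma)(x)\lesssim t^{-1/2}$; the explicit Bessel route makes even this immediate, since $e^{-u-z^2/(4u)}\le1$.
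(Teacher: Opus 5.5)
Your proposal is correct and follows the paper's proof essentially step for step: you use $0\le S_t^\gamma\le 1$ on $\{|x|>\sqrt t\}$, and on $\{|x|\le\sqrt t\}$ you bound $(\gamma+|x|^{-1})/\psi_\gamma(x)$ by a constant and get $(g_t*\psi_\gamma)(x)\lesssim t^{-1/2}$ by splitting at $|y|=2\sqrt t$, while your explicit incomplete-Bessel bound $\int_{\gamma^2t/2}^\infty u^{-3/2}\,du$ is a valid and slightly quicker alternative for that last step. Justify $S_t^\gamma\le 1$ only by the direct comparison of the incomplete and complete Bessel integrals, not by the probabilistic content of Corollary~\ref{CorGroundStateVisitation}(i), since that corollary is proved via Proposition~\ref{PropSubMart} and Lemma~\ref{LemSquareIntegrability}, which in $d=3$ rely on the present lemma.
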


\begin{proof}
Recall from~\eqref{Drfitfunction} that $|b^\gamma(x)|=\gamma+|x|^{-1}$. On the set \(\{|x|>\sqrt t\}\), we use the bound \(0\le S_t^\gamma(x)\le 1\), which follows from~\eqref{Sfunction} and Lemma~\ref{LemmaGaussianGroundStateConvolution} since \(0\le \operatorname{erfc}(\cdot)\le 2\), to obtain
\begin{align*}
\bigl|S_t^\gamma(x)b^\gamma(x)\bigr|
\le
\gamma+\frac1{|x|}
\le
C(\gamma,T)\Big(1+\frac1{|x|}\Big).
\end{align*}
On the set \(\{|x|\le\sqrt t\}\), we split the integral into the
regions \(\{|y|\le2\sqrt t\}\) and \(\{|y|>2\sqrt t\}\). Then
\begin{align}
(g_t*\psi_\gamma)(x) 
\le
C\int_{\mathbb R^3}
g_t(x-y)\frac1{|y|}
\,dy
&=
C\int_{\{|y|\le2\sqrt t\}}
g_t(x-y)\frac1{|y|}
\,dy
+
C\int_{\{|y|>2\sqrt t\}}
g_t(x-y)\frac1{|y|}
\,dy
\nonumber\\
&\le
\frac{C}{t^{3/2}}
\int_{\{|y|\le2\sqrt t\}}
\frac1{|y|}
\,dy
+
\frac{C}{t^{3/2}}
\int_{\{|y|>2\sqrt t\}}
e^{-\frac{|y|^2}{8t}}
\frac1{|y|}
\,dy
\nonumber\\
&=
\frac{C}{t^{3/2}}
\int_0^{2\sqrt t} r\,dr
+
\frac{C}{t^{3/2}}
\int_{2\sqrt t}^{\infty}
r e^{-\frac{r^2}{8t}}
\,dr
\le
\frac{C}{\sqrt t}, \nonumber
\end{align}
where $C=C(\gamma)>0$. Here, on \(\{|y|>2\sqrt t\}\), the assumption
\(|x|\le\sqrt t\) gives $|x-y| \ge |y|-|x| \ge |y|/2$ which yields the second inequality. Using the above, we have
\begin{align*}
\bigl|S_t^\gamma(x)b^\gamma(x)\bigr|
=
e^{-\gamma^2t/2}
\frac{(g_t*\psi_\gamma)(x)}{\psi_\gamma(x)}
\left(\gamma+\frac1{|x|}\right)
\le
\frac{1}{\psi_\gamma(x)}
\frac{C(\gamma)}{\sqrt t}
\left(\gamma+\frac1{|x|}\right)
\le
|x|e^{\gamma |x|}
\frac{C(\gamma)}{\sqrt t}
\left(\gamma+\frac1{|x|}\right),
\end{align*}
where the final inequality follows from~\eqref{EigenFunctionEigenValue}. Finally, using that $|x|\le \sqrt t$ and $t \le T$, we conclude that the above is bounded by $C(\gamma,T)\big(1+\frac1{\sqrt t}\big)$.
\end{proof}

\begin{proof}[Proof of Lemma~\ref{LemSquareIntegrability} for $d=3$]
Using~\eqref{DefTransDensity} and~\eqref{BoundProductSDriftBound}, we obtain
\begin{align*}
\int_0^T
\int_{\mathbb R^3}
q_s^\gamma(x,y)
\,
\bigl|
 S_{T-s}^{\gamma}(y)
b^{\gamma}(y)
\bigr|^2
\,dy\,ds  
&\le
C
\int_0^T
\int_{\{|y|\le \sqrt{T-s}\}}
e^{-\frac{\gamma^2s}{2}}
p_s^\gamma(x,y)\psi_\gamma(y)
\left(1+\frac1{\sqrt{T-s}}\right)^2
\,dy\,ds
\\
&\quad+
C
\int_0^T
\int_{\{|y|> \sqrt{T-s}\}}
e^{-\frac{\gamma^2s}{2}}
p_s^\gamma(x,y)\psi_\gamma(y)
\left(1+\frac1{|y|}\right)^2
\,dy\,ds ,
\end{align*}
where \(C=C(\gamma,T,x)>0\). Let us call the two integrals by $I_1$ and $I_2$. Below we will bound each separately. \vspace{.2cm}

\noindent \textit{Estimating \(I_1\)}. For \(I_1\), we first split the time integral over $[0,T]$ into $[0,T/2]$ and $[T/2,T]$. On \([0,T/2]\), we have \(T-s\ge T/2\), and hence $\big(1+\frac1{\sqrt{T-s}}\big)^2 \le C(T)$. Moreover, since \(\psi_\gamma\) is an eigenfunction of \(L^\gamma\) with eigenvalue \(\gamma^2/2\), and \(p_s^\gamma(x,y)\) is the corresponding fundamental solution, we have
\begin{align}\label{GroundStateIdentity}
\int_{\mathbb R^3}
p_s^\gamma(x,y)\psi_\gamma(y)\,dy
=
e^{\gamma^2s/2}\psi_\gamma(x).
\end{align}
Therefore, we obtain
\begin{align}
\int_0^{T/2}
\int_{\{|y|\le \sqrt{T-s}\}}
e^{-\frac{\gamma^2s}{2}}
p_s^\gamma(x,y)\psi_\gamma(y)
\left(1+\frac1{\sqrt{T-s}}\right)^2
\,dy\,ds
&\le
C
\int_0^{T/2}
e^{-\frac{\gamma^2s}{2}}
\int_{\mathbb R^3}
p_s^\gamma(x,y)\psi_\gamma(y)
\,dy\,ds
\nonumber\\
&\le
C\psi_\gamma(x). \nonumber
\end{align}

On \([T/2,T]\), using~\eqref{EigenFunctionEigenValue} and~\eqref{PtgammaBounds}, we have $p_s^\gamma(x,y)\psi_\gamma(y) \le C \big( |y|^{-1} + |y|^{-2} \big)$, where recall that \(C=C(\gamma,T,x)>0\). Hence,
\[
\int_{\{|y|\le \sqrt{T-s}\}}
p_s^\gamma(x,y)\psi_\gamma(y)\,dy
\le
C
\int_{\{|y|\le \sqrt{T-s}\}}
\left(
\frac1{|y|}
+
\frac1{|y|^2}
\right)dy
\le
C\sqrt{T-s},
\]
where the last inequality follows by passing to polar coordinates and using \(\sqrt{T-s}+2\le C(T)\) on \([T/2,T]\). Consequently,
\[
I_1
\le
C\psi_\gamma(x)
+
C
\int_{T/2}^{T}
\left(1+\frac1{\sqrt{T-s}}\right)^2
\sqrt{T-s}\,ds
<\infty .
\]

\noindent \textit{Estimating \(I_2\)}. Again we split the time integral into $[0,T/2]$ and $[T/2,T]$. On \([0,T/2]\), we have \(|y|>\sqrt{T-s}\ge \sqrt{T/2}\), and therefore $(1+|y|^{-1})^2\le C(T)$. Hence,~\eqref{GroundStateIdentity} implies
\[
\int_0^{T/2}
\int_{\{|y| > \sqrt{T-s}\}}
e^{-\frac{\gamma^2s}{2}}
p_s^\gamma(x,y)\psi_\gamma(y)
\left(1+\frac1{|y|}\right)^2
\,dy\,ds
\le
C \psi_\gamma(x) < \infty.
\]
On \([T/2,T]\), using~\eqref{EigenFunctionEigenValue} and~\eqref{PtgammaBounds}, there exist constants
\(C=C(\gamma,T,x)>0\) and \(c=c(T)>0\) such that
\[
e^{-\frac{\gamma^2s}{2}}p_s^\gamma(x,y)\psi_\gamma(y)
\le
C
e^{-c_T|y|^2}
\left(
\frac1{|y|}
+
\frac1{|y|^2}
\right).
\]
Therefore, we have the first inequality below.
\begin{align} \label{SecondInt}
\int_{\{|y|>\sqrt{T-s}\}}
e^{-\frac{\gamma^2s}{2}}
p_s^\gamma(x,y)\psi_\gamma(y)
\left(1+\frac1{|y|}\right)^2
\,dy
&\le
C
\int_{\{|y|>\sqrt{T-s}\}}
e^{-c_T|y|^2}
\left(
\frac1{|y|}
+
\frac1{|y|^2}
\right)
\left(1+\frac1{|y|}\right)^2
\,dy \nonumber \\
&=
C
\int_{\sqrt{T-s}}^\infty
e^{-c_Tr^2}
(r+1)
\left(1+\frac1r\right)^2
\,dr \nonumber \\
&\le
C
\left(
\frac{C}{\sqrt{T-s}}
+
\int_{\sqrt T}^\infty
e^{-c_Tr^2}(r+1)
\left(1+\frac1r\right)^2
\,dr
\right) \nonumber \\
&\le
\frac{C}{\sqrt{T-s}} 
\end{align}
The second inequality follows by splitting the integral over
\([\sqrt{T-s},\sqrt T]\) and \([\sqrt T,\infty)\), and observing that
\[
\int_{\sqrt{T-s}}^{\sqrt T}
e^{-c_Tr^2}(r+1)\left(1+\frac1r\right)^2\,dr
\le
C\int_{\sqrt{T-s}}^{\sqrt T}\frac{dr}{r^2}
=
\frac{C}{\sqrt{T-s}}-\frac{C}{\sqrt T}
\le
\frac{C}{\sqrt{T-s}}.
\]
For the final inequality in~\eqref{SecondInt}, we use the observation
\[
\int_{\sqrt T}^{\infty}
e^{-c_Tr^2}(r+1)\left(1+\frac1r\right)^2\,dr
\le
C
\le
\frac{C}{\sqrt{T-s}},
\]
where the last inequality follows from the fact that
\(\sqrt{T-s}\le\sqrt{T/2}\) for \(s\in[T/2,T]\). Since the right-hand side of~\eqref{SecondInt} is integrable over \(s\in[T/2,T]\), combining this estimate with the one on \([0,T/2]\) yields \(I_2<\infty\).

Since \(0\le S_t^\gamma\le 1\), Lemma~\ref{LemmaHatDriftBound} implies that the
bound~\eqref{BoundProductSDriftBound} holds for both
\(\bigl|S_t^\gamma(x)\widehat b_t^\gamma(x)\bigr|\) and
\(\bigl|\widehat b_t^\gamma(x)\bigr|\), for all \(0<t\le T\). Therefore, the
same estimate holds with \(S_{T-s}^{\gamma}(y)b^\gamma(y)\) replaced by either
\(S_{T-s}^{\gamma}(y)\widehat b_{T-s}^{\gamma}(y)\) or
\(\widehat b_{T-s}^{\gamma}(y)\).
\end{proof}

\section{Proof of Lemma~\ref{LemGroundStateVisitation}} \label{SubsectionLemSubMART}

\begin{proof}
\noindent Part (i). Recall that
\(\widehat{\mathbb P}_{x}^{T,\gamma}
:=
\mathbb P_x^{T,\gamma}
[\,\cdot\,|\,\tau>T]\). Recall from the proof of part~(i) of Corollary~\ref{CorGroundStateVisitation} that
\(\{\mathbf S_{t\wedge\tau}^{T,\gamma}\}_{t\in[0,T]}\)
is a bounded \(\mathbb P_x^{T,\gamma}\)-martingale. Hence, for every
\(t\in[0,T]\), we have
\begin{align}
\mathbb E_x^{T,\gamma}
\left[
\mathbf 1_{\{\tau>T\}}
\,\big|\,
\mathcal F_t^{T,x}
\right]
&\overset{\eqref{1OComp}}{=}
\mathbb E_x^{T,\gamma}
\left[
\mathbf S_{\tau\wedge T}^{T,\gamma}
\,\big|\,
\mathcal F_t^{T,x}
\right]
=
\mathbf S_{t\wedge\tau}^{T,\gamma}. \nonumber
\end{align}
Thus, the density process of
\(\widehat{\mathbb P}_{x}^{T,\gamma}\) with respect to
\(\mathbb P_x^{T,\gamma}\) is
\begin{align} \label{Zdefine}
Z_t^{T,\gamma}
:=
\frac{
d\widehat{\mathbb P}_{x}^{T,\gamma}
}{
d\mathbb P_x^{T,\gamma}
}
\bigg|_{\mathcal F_t^{T,x}}
=
\frac{
\mathbb E_x^{T,\gamma}
\left[
\mathbf 1_{\{\tau>T\}}
\,\big|\,
\mathcal F_t^{T,x}
\right]
}{
\mathbb P_x^{T,\gamma}[\tau>T]
}
=
\frac{
\mathbf S_{t\wedge\tau}^{T,\gamma}
}{
S_T^\gamma(x)
},
\qquad
0\le t\le T,
\end{align}
where the last equality also uses~\eqref{PartOneFinal}. Consequently, the process
\(\{Z_t^{T,\gamma}\}_{t\in[0,T]}\) is a bounded continuous
\(\mathbb P_x^{T,\gamma}\)-martingale, with
\(Z_0^{T,\gamma}=1\). Hence, for \(0\le t\le T\), we have
\begin{align}\label{Emmy}
dZ_t^{T,\gamma}
=
\frac{1}{S_T^\gamma(x)}
\,d\mathbf S_{t\wedge\tau}^{T,\gamma}
=
\mathbf 1_{\{t<\tau\}}
\frac{1}{S_T^\gamma(x)}
\nabla S_{T-t}^{\gamma}(X_t)\cdot dW_t^{T,\gamma}
=
Z_t^{T,\gamma}
\nabla\log S_{T-t}^{\gamma}(X_t)\cdot dW_t^{T,\gamma},
\end{align}
where the second equality follows from Proposition~\ref{PropSubMart}
and the martingale representation~\eqref{MartingaleFormula}, together
with the fact that \(\mathbf A_{t\wedge\tau}^{T,\gamma}=0\) for
\(t\in[0,T]\). The last equality uses the identity
\(\mathbf S_{t\wedge\tau}^{T,\gamma}
=
\mathbf 1_{\{t<\tau\}}S_{T-t}^{\gamma}(X_t)\), which may be proved
exactly as~\eqref{1OComp}, together with the above representation of
\(Z_t^{T,\gamma}\). Under \(\widehat{\mathbb P}_{x}^{T,\gamma}\), consider the process
\begin{align}\label{WidehatMartingale}
\widehat M_t^{T,\gamma}
:=
X_t-X_0
-
\int_0^t
\widehat b_{T-s}^{\gamma}(X_s)\,ds,
\qquad
t\in[0,T].
\end{align}
Since the conditional law $\widehat{\mathbb P}_{x}^{T,\gamma}$ has the density \(Z_s^{T,\gamma}\) defined
in~\eqref{Zdefine}, Tonelli's theorem gives
\begin{align}\label{RegDriftSecondMoment}
\widehat{\mathbb E}_{x}^{T,\gamma}
\left[
\int_0^T
\left|
\widehat b_{T-s}^{\gamma}(X_s)
\right|^2
\,ds
\right]
&=
\frac{1}{S_T^\gamma(x)}
\int_0^T
\mathbb E_x^{T,\gamma}
\left[
\mathbf S_{s\wedge\tau}^{T,\gamma}
\left|
\widehat b_{T-s}^{\gamma}(X_s)
\right|^2
\right]
\,ds
\nonumber\\
&\le
\frac{1}{S_T^\gamma(x)}
\int_0^T
\mathbb E_x^{T,\gamma}
\left[
S_{T-s}^{\gamma}(X_s)
\left|
\widehat b_{T-s}^{\gamma}(X_s)
\right|^2
\right]
\,ds
\nonumber\\
&\le
\frac{1}{S_T^\gamma(x)}
\int_0^T
\int_{\mathbb R^d}
q_s^\gamma(x,y)
\left|
\widehat b_{T-s}^{\gamma}(y)
\right|^2
\,dy\,ds
<
\infty. 
\end{align}
Here, the first inequality uses
\(\mathbf S_{s\wedge\tau}^{T,\gamma}
=
\mathbf 1_{\{s<\tau\}}S_{T-s}^{\gamma}(X_s)
\le
S_{T-s}^{\gamma}(X_s)\).
The second inequality uses that \(q_s^\gamma(x,y)\) is the transition
density of \(X_s\) under \(\mathbb P_x^{T,\gamma}\), together with
\(0\le S_{T-s}^{\gamma}(y)\le1\). The finiteness follows directly from
Lemma~\ref{LemSquareIntegrability} with \(\widehat b_{T-s}^{\gamma}(y)\).  Consequently, H\"older's inequality yields that \(\int_0^T |\widehat b_{T-s}^{\gamma}(X_s)|\,ds<\infty\), \(\widehat{\mathbb P}_{x}^{T,\gamma}\)-almost surely, and hence the process \(\widehat M=\{\widehat M_t^{T,\gamma}\}_{t\in[0,T]}\) is well defined.

To show that $\widehat M$ is a \(\widehat{\mathbb P}_{x}^{T,\gamma}\)-martingale, let \(0\le t\le T\). Applying It\^o's product formula to the
\(\mathbb R^d\)-valued process \(\{Z_t^{T,\gamma}\widehat M_t^{T,\gamma}\}_{t\in[0,T]}\), we obtain
\begin{align}\label{ZWhat}
d\bigl(Z_t^{T,\gamma}\widehat M_t^{T,\gamma}\bigr)
&=
\widehat M_t^{T,\gamma}\,dZ_t^{T,\gamma}
+
Z_t^{T,\gamma}\,d\widehat M_t^{T,\gamma}
+
dZ_t^{T,\gamma}\,d\widehat M_t^{T,\gamma}.
\end{align}
Since \(Z_t^{T,\gamma}=0\) for \(t\in[\tau,T]\), it suffices to analyze
\eqref{ZWhat} on the stochastic interval \([0,\tau)\). On this interval, substituting the SDE~\eqref{CanonicalGSDSDE} for \(X\) into
\eqref{WidehatMartingale}, and using $\nabla\log S_{T-t}^{\gamma}(X_t) = \widehat b_{T-t}^{\gamma}(X_t)-b^\gamma(X_t)$, which follows from~\eqref{GradS}, we obtain $d\widehat M_t^{T,\gamma}
=
dW_t^{T,\gamma}
-
\nabla\log S_{T-t}^{\gamma}(X_t)\,dt$. Consequently, using~\eqref{Emmy}, we obtain
\[
dZ^{T,\gamma} \, d\widehat M^{T,\gamma}
=
Z_t^{T,\gamma}
\nabla\log S_{T-t}^{\gamma}(X_t)\,dt
=
Z_t^{T,\gamma}\,dW_t^{T,\gamma}
-
Z_t^{T,\gamma}\,d\widehat M_t^{T,\gamma},
\]
where the second equality again uses the expression for $d\widehat M_t^{T,\gamma}$. Substituting this into~\eqref{ZWhat}, we obtain, on
\([0,\tau)\),
\[
d\bigl(Z_t^{T,\gamma}\widehat M_t^{T,\gamma}\bigr)
=
\widehat M_t^{T,\gamma}\,dZ_t^{T,\gamma}
+
Z_t^{T,\gamma}\,dW_t^{T,\gamma}.
\]
Since \(Z_t^{T,\gamma}\widehat M_t^{T,\gamma}=0\) for
\(t\in[\tau,T]\), the process
\(\{Z_t^{T,\gamma}\widehat M_t^{T,\gamma}\}_{t\in[0,T]}\) is constant
on \([\tau,T]\). It therefore follows that
\(\{Z_t^{T,\gamma}\widehat M_t^{T,\gamma}\}_{t\in[0,T]}\) is a local
\(\mathbb P_x^{T,\gamma}\)-martingale. Since
\(0\le Z_t^{T,\gamma}\le 1/S_T^\gamma(x)\), it follows from
\eqref{WidehatMartingale}, the elementary inequality
\(|a+b+c|^2\le3(|a|^2+|b|^2+|c|^2)\), and H\"older's inequality that
\begin{align} \label{ZMUniformBound}
\sup_{0\le t\le T}
\mathbb E_x^{T,\gamma}
\left[
\left|
Z_t^{T,\gamma}\widehat M_t^{T,\gamma}
\right|^2
\right]
&\le
\frac{3}{S_T^\gamma(x)^2}
\Bigg(
\sup_{0\le t\le T}
\mathbb E_x^{T,\gamma}
\left[
|X_t|^2
\right]
+
|x|^2
+
T\,
\mathbb E_x^{T,\gamma}
\left[
\int_0^T
\left|
\widehat b_{T-s}^{\gamma}(X_s)
\right|^2
\,ds
\right]
\Bigg). 
\end{align}
The last term in~\eqref{ZMUniformBound} is finite by the final estimate in~\eqref{RegDriftSecondMoment}, since
\[
\mathbb E_x^{T,\gamma}
\left[
\int_0^T
\left|
\widehat b_{T-s}^{\gamma}(X_s)
\right|^2
\,ds
\right]
=
\int_0^T
\int_{\mathbb R^d}
q_s^\gamma(x,y)
\left|
\widehat b_{T-s}^{\gamma}(y)
\right|^2
\,dy\,ds
<
\infty.
\]
Moreover, the first term in~\eqref{ZMUniformBound} is finite when \(d=3\) by the
uniform second-moment estimate established in the proof of
Lemma~\ref{LemmaL2IncrementY}. Thus, it remains to control this term
when \(d=2\). For this purpose, recall from the proof of
Lemma~\ref{LemmaCP} that, under the parameter identification
\(\beta=2\gamma\) and the time-space transformation
\(\widetilde X_t^\gamma=\sqrt{2}\,Z_{t/2}\), the law
\(\mathbb P_x^\gamma\) of \(\widetilde X\) is obtained from
\(\mathbb P_{x/\sqrt{2}}^{(0),(2\gamma)\downarrow}\), and
\(\mathbb P_x^{T,\gamma}\) is its restriction to the time interval
\([0,T]\). Hence, we have the equality below.
\begin{align}
\sup_{0\le t\le T}
\mathbb E_x^{T,\gamma}
\left[
\frac{1}{
K_0\left(\sqrt{2\gamma}|X_t|\right)
}
\right]
&=
\sup_{0\le r\le T/2}
\mathbb E_{x/\sqrt{2}}^{(0),(2\gamma)\downarrow}
\left[
\frac{1}{
K_0\left(\sqrt{4\gamma}|Z_r|\right)
}
\right]
<
\infty. \nonumber
\end{align}
The finiteness follows from~\cite[Eq.~(4.103)]{Chen2} with
\(\beta=2\gamma\), \(z_0=x/\sqrt{2}\), and \(t_0=T/2\). Moreover, by
the asymptotic behaviors~\eqref{AsympBesselOrigin}--\eqref{AsympBesselInfinity}, we have \(\sup_{r>0}r^2K_0\left(\sqrt{2\gamma}r\right)<\infty\).
Consequently, there exists \(C=C(\gamma)>0\) such that
\(|y|^2\le C/K_0\left(\sqrt{2\gamma}|y|\right)\) for
\(y\in\mathbb R^2\setminus\{0\}\). Therefore,
\[
\sup_{0\le t\le T}
\mathbb E_x^{T,\gamma}
\left[
|X_t|^2
\right]
\le
C
\sup_{0\le t\le T}
\mathbb E_x^{T,\gamma}
\left[
\frac{1}{
K_0\left(\sqrt{2\gamma}|X_t|\right)
}
\right]
<
\infty.
\]
Thus, in either dimension, \(\{Z_t^{T,\gamma}\widehat M_t^{T,\gamma}\}_{t\in[0,T]}\) is an
\(L^2\)-bounded local \(\mathbb P_x^{T,\gamma}\)-martingale, and
therefore a \(\mathbb P_x^{T,\gamma}\)-martingale.

Next, we deduce that the process \(\{\widehat M_t^{T,\gamma}\}_{t\in[0,T]}\) is a \(\widehat{\mathbb P}_{x}^{T,\gamma}\)-martingale. For \(0\le s<t\le T\), we have the first equality below by Bayes' rule.
\begin{align}
\widehat{\mathbb E}_{x}^{T,\gamma}
\left[
\widehat M_t^{T,\gamma}
\,\middle|\,
\mathcal F_s^{T,x}
\right]
&=
\frac{
\mathbb E_x^{T,\gamma}
\left[
Z_T^{T,\gamma}\widehat M_t^{T,\gamma}
\,\middle|\,
\mathcal F_s^{T,x}
\right]
}{
\mathbb E_x^{T,\gamma}
\left[
Z_T^{T,\gamma}
\,\middle|\,
\mathcal F_s^{T,x}
\right]
}
\nonumber\\
&=
\frac{
\mathbb E_x^{T,\gamma}
\left[
\mathbb E_x^{T,\gamma}
\left[
Z_T^{T,\gamma}
\,\middle|\,
\mathcal F_t^{T,x}
\right]
\widehat M_t^{T,\gamma}
\,\middle|\,
\mathcal F_s^{T,x}
\right]
}{
\mathbb E_x^{T,\gamma}
\left[
Z_T^{T,\gamma}
\,\middle|\,
\mathcal F_s^{T,x}
\right]
}
\nonumber\\
&=
\frac{
\mathbb E_x^{T,\gamma}
\left[
Z_t^{T,\gamma}\widehat M_t^{T,\gamma}
\,\middle|\,
\mathcal F_s^{T,x}
\right]
}{
Z_s^{T,\gamma}
}
=
\frac{
Z_s^{T,\gamma}\widehat M_s^{T,\gamma}
}{
Z_s^{T,\gamma}
}
=
\widehat M_s^{T,\gamma} \nonumber
\end{align}
Here, the second equality uses that \(\widehat M_t^{T,\gamma}\) is \(\mathcal F_t^{T,x}\)-measurable, the third uses the martingale property of the density process \(\{Z_t^{T,\gamma}\}_{t\in[0,T]}\) defined in~\eqref{Zdefine}, and the fourth uses that \(\{Z_t^{T,\gamma}\widehat M_t^{T,\gamma}\}_{t\in[0,T]}\) is a
\(\mathbb P_x^{T,\gamma}\)-martingale. Since \(\widehat{\mathbb P}_{x}^{T,\gamma}[\tau>T]=1\) and \(s<T\), we have \(s<\tau\) and hence \(Z_s^{T,\gamma}>0\), \(\widehat{\mathbb P}_{x}^{T,\gamma}\)-almost surely. Therefore, the above ratios are well defined. Therefore, \(\{\widehat M_t^{T,\gamma}\}_{t\in[0,T]}\) is a
\(\widehat{\mathbb P}_{x}^{T,\gamma}\)-martingale. \vspace{.2cm}

\noindent Part (ii). Since \(\rho\) is a stopping time taking values in
\([0,T]\) and the density process
\(\{Z_t^{T,\gamma}\}_{t\in[0,T]}\) given in~\eqref{Zdefine} is a bounded
\(\mathbb P_x^{T,\gamma}\)-martingale, the optional sampling theorem yields
\(Z_\rho^{T,\gamma}
=
\mathbb E_x^{T,\gamma}
\big[
Z_T^{T,\gamma}
\,\big|\,
\mathcal F_\rho^{T,x}
\big]\).
Consequently, for every \(A\in\mathcal F_\rho^{T,x}\), we have the final
equality below.
\begin{align}\label{DensityAtRho}
\widehat{\mathbb P}_x^{T,\gamma}[A]
&=
\mathbb E_x^{T,\gamma}
\left[
Z_T^{T,\gamma}\mathbf 1_A
\right]
=
\mathbb E_x^{T,\gamma}
\left[
\mathbb E_x^{T,\gamma}
\left[
Z_T^{T,\gamma}
\,\middle|\,
\mathcal F_\rho^{T,x}
\right]
\mathbf 1_A
\right]
=
\mathbb E_x^{T,\gamma}
\left[
Z_\rho^{T,\gamma}\mathbf 1_A
\right].
\end{align}
Thus, \(Z_\rho^{T,\gamma}\) is the density of
\(\widehat{\mathbb P}_x^{T,\gamma}\) with respect to
\(\mathbb P_x^{T,\gamma}\) on \(\mathcal F_\rho^{T,x}\). Moreover,
by~\eqref{Zdefine} together with
\(\mathbf S_{t\wedge\tau}^{T,\gamma}
=
\mathbf 1_{\{t<\tau\}}S_{T-t}^{\gamma}(X_t)\), we obtain
\begin{align}\label{DensityAtRhoExplicit}
Z_\rho^{T,\gamma}
=
\mathbf 1_{\{\rho<\tau\}}
\frac{
S_{T-\rho}^\gamma(X_\rho)
}{
S_T^\gamma(x)
},
\qquad
\mathbb P_x^{T,\gamma}\text{-almost surely}.
\end{align}
Since \(\widehat{\mathbb P}_x^{T,\gamma}[\tau>T]=1\) and
\(\{\tau>T\}\subseteq\{\rho<\tau\}\), because \(\rho\) takes values in
\([0,T]\), it follows that
\(\widehat{\mathbb P}_x^{T,\gamma}[\rho<\tau]=1\). On
\(\{\rho<\tau\}\), we have \(X_\rho\neq0\), and therefore
\(S_{T-\rho}^\gamma(X_\rho)>0\). On \(\{\rho=T\}\), we have
\(S_0^\gamma(X_T)=1\). Hence
\(\bigl(S_{T-\rho}^\gamma(X_\rho)\bigr)^{-1}\) is finite
\(\widehat{\mathbb P}_x^{T,\gamma}\)-almost surely. Moreover,
\begin{align}\label{InverseSNormalization}
\widehat{\mathbb E}_x^{T,\gamma}
\left[
\bigl(S_{T-\rho}^\gamma(X_\rho)\bigr)^{-1}
\right]
&=
\widehat{\mathbb E}_x^{T,\gamma}
\left[
\mathbf 1_{\{\rho<\tau\}}
\bigl(S_{T-\rho}^\gamma(X_\rho)\bigr)^{-1}
\right]
\nonumber\\
&=
\mathbb E_x^{T,\gamma}
\left[
Z_\rho^{T,\gamma}
\mathbf 1_{\{\rho<\tau\}}
\bigl(S_{T-\rho}^\gamma(X_\rho)\bigr)^{-1}
\right]
=
\frac{1}{S_T^\gamma(x)}
\mathbb P_x^{T,\gamma}[\rho<\tau]
\in(0,\infty).
\end{align}
Here, the first equality uses that
\(\widehat{\mathbb P}_x^{T,\gamma}[\rho<\tau]=1\), the second equality
follows from~\eqref{DensityAtRho}, and the final equality follows
from~\eqref{DensityAtRhoExplicit}. The above quantity is finite and
strictly positive since \(S_T^\gamma(x)>0\) and
\(\mathbb P_x^{T,\gamma}[\rho<\tau]>0\). Thus, the path measure appearing
in the statement is a well-defined probability measure.

Now let \(A\in\mathcal F_\rho^{T,x}\). Then, using~\eqref{DensityAtRho}
and~\eqref{InverseSNormalization}, we obtain the first equality below.
\begin{align}\label{ConditionalLawAtRho}
\frac{
\widehat{\mathbb E}_x^{T,\gamma}
\left[
\mathbf 1_A
\bigl(S_{T-\rho}^\gamma(X_\rho)\bigr)^{-1}
\right]
}{
\widehat{\mathbb E}_x^{T,\gamma}
\left[
\bigl(S_{T-\rho}^\gamma(X_\rho)\bigr)^{-1}
\right]
}
&=
\frac{
S_T^\gamma(x)\,
\mathbb E_x^{T,\gamma}
\left[
Z_\rho^{T,\gamma}
\mathbf 1_A
\mathbf 1_{\{\rho<\tau\}}
\bigl(S_{T-\rho}^\gamma(X_\rho)\bigr)^{-1}
\right]
}{
\mathbb P_x^{T,\gamma}[\rho<\tau]
}
\nonumber\\
&=
\frac{
\mathbb P_x^{T,\gamma}
\left[
A\cap\{\rho<\tau\}
\right]
}{
\mathbb P_x^{T,\gamma}[\rho<\tau]
}
=
\widehat{\mathbb P}_x^{T,\gamma,\rho}[A]
\end{align}
Here, the second equality uses~\eqref{DensityAtRhoExplicit}. Thus, the
restrictions to \(\mathcal F_\rho^{T,x}\) of
\(\widehat{\mathbb P}_x^{T,\gamma,\rho}\) and of the path measure
\[
\frac{
\bigl(S_{T-\rho}^{\gamma}(X_\rho)\bigr)^{-1}
}{
\widehat{\mathbb E}_x^{T,\gamma}
\left[
\bigl(S_{T-\rho}^{\gamma}(X_\rho)\bigr)^{-1}
\right]
}
\,\widehat{\mathbb P}_{x}^{T,\gamma}
\]
coincide. Finally, since the stopped coordinate process
\(\{X_{t\wedge\rho}\}_{t\in[0,T]}\) is
\(\mathcal F_\rho^{T,x}\)-measurable as a
\(C([0,T];\mathbb R^d)\)-valued random variable, it follows
from~\eqref{ConditionalLawAtRho} that its distribution under
\(\widehat{\mathbb P}_x^{T,\gamma,\rho}\) agrees with its distribution
under the path measure displayed above. 
\end{proof}

\begin{appendix}

\section{Proofs of lemmas} \label{AppProofsLemmas}

\subsection{Proof of Lemma~\ref{LemmaCP}} \label{ProofAppLemmaCP}

\begin{proof} 
\noindent \(1^\circ\) (\(d=2\)). Let
\(Z=\{Z_t\}_{t\in[0,\infty)}\) be the complex-valued diffusion
constructed in~\cite[Eq.~(2.15) and Prop.~2.6]{Chen2} under
\(\mathbb P_{x/\sqrt{2}}^{(0),\beta\downarrow}\). By~\cite[Thm.~2.10,
Eq.~(2.21)]{Chen2}, the process
\(\{\sqrt{2}\,Z_t\}_{t\ge0}\) has transition density
\begin{align}\label{ZtransDens}
r_t^\beta(z,y)
:=
e^{-\beta t}
\frac{
K_0\bigl(\sqrt{\beta}|y|\bigr)
}{
K_0\bigl(\sqrt{\beta}|z|\bigr)
}
P_t^\beta(z,y).
\end{align}
Identifying
\(\mathbb C\) with \(\mathbb R^2\) in the usual way, we regard \(Z\) as
an \(\mathbb R^2\)-valued continuous process and
\(\mathbb P_{x/\sqrt{2}}^{(0),\beta\downarrow}\) as its law on
\(C([0,\infty),\mathbb R^2)\). Moreover, the parameter \(\beta\) is
related to the present parameter \(\gamma\) by \(\beta=2\gamma\), and
the time change \(t\mapsto t/2\) converts their normalization of the
Laplacian into the present normalization \(\frac12\Delta\). Therefore,
\(\widetilde X_t^\gamma:=\sqrt{2}\,Z_{t/2}\), for
\(t\in[0,\infty)\), is a time-homogeneous Markov process with law
\(\mathbb P_x^\gamma:=
\mathbb P_{x/\sqrt{2}}^{(0),(2\gamma)\downarrow}
\circ\Phi^{-1}\) on \(C([0,\infty),\mathbb R^2)\), where
\[
\Phi:
C([0,\infty),\mathbb R^2)
\longrightarrow
C([0,\infty),\mathbb R^2),
\qquad
(\Phi\omega)(t)
:=
\sqrt{2}\,\omega(t/2).
\]
Under the changes \(\beta=2\gamma\) and \(t\mapsto t/2\), the integral
kernel \(P_t^\beta(z,y)\) appearing in~\cite[Eqs.~(2.17)--(2.19)]{Chen2}
satisfies \(P_{t/2}^{2\gamma}(z,y)=p_t^\gamma(z,y)\). Substituting this identity into~\eqref{ZtransDens}, we obtain
\(r_{t/2}^{2\gamma}(z,y)=q_t^\gamma(z,y)\). Hence, the transition
density of \(\widetilde X^\gamma\) under \(\mathbb P_x^\gamma\) is
\(q_t^\gamma(z,y)\). Define,
\begin{align}
\mathbb P_x^{T,\gamma}
:=
\mathbb P_x^\gamma\circ\pi_T^{-1},
\qquad
\pi_T:
C([0,\infty),\mathbb R^2)
\longrightarrow
C([0,T],\mathbb R^2),
\qquad
\pi_T(\omega):=\omega|_{[0,T]}.  \nonumber
\end{align}
Thus, under \(\mathbb P_x^{T,\gamma}\), the coordinate process \(\{X_t\}_{t\in[0,T]}\) has the same law as \(\{\widetilde X_t^\gamma\}_{t\in[0,T]}\) under \(\mathbb P_x^\gamma\), and hence has the desired properties. \vspace{.2cm}

\noindent \(2^\circ\) (\(d=3\)). By~\cite[Thm.~3.3]{CKMV2}, there
exists a three-dimensional time-homogeneous Markov process
\(X^1=\{X_t^1\}_{t\in[0,\gamma^2T]}\), started from \(\gamma x\), with
transition density
\[
q_t^1(z,y)
=
e^{-t/2}
\frac{\psi_1(y)}{\psi_1(z)}
p_t^1(z,y),
\qquad
t>0,\quad z,y\in\mathbb R^3\setminus\{0\}.
\]
Let \(\mathbf P_{\gamma x}^{\gamma^2T,1}\) denote its law on
\(C([0,\gamma^2T],\mathbb R^3)\). Define
\begin{align}
\mathbb P_x^{T,\gamma}
:=
\mathbb P_{\gamma x}^{\gamma^2T,1}
\circ\Theta_\gamma^{-1},
\qquad
\Theta_\gamma: C([0,\gamma^2T],\mathbb R^3) \longrightarrow
C([0,T],\mathbb R^3),
\qquad
(\Theta_\gamma\omega)(t)
:=
\frac1\gamma\omega(\gamma^2t). \nonumber
\end{align}
Thus, under \(\mathbf P_x^{T,\gamma}\), the coordinate process
\(\{X_t\}_{t\in[0,T]}\) has the same law as
\(\{\gamma^{-1}X_{\gamma^2t}^1\}_{t\in[0,T]}\) under
\(\mathbf P_{\gamma x}^{\gamma^2T,1}\). In particular,
\(\mathbf P_x^{T,\gamma}(X_0=x)=1\), and the coordinate process is a
time-homogeneous Markov process with transition density $\gamma^3
q_{\gamma^2t}^1(\gamma z,\gamma y)= q_t^\gamma(z,y)$.
\end{proof}

\subsection{Proof of Lemma~\ref{lemmaLocalizedSDEAwayOrigin}} \label{ProoflemmaLocalizedSDEAwayOrigin}
For \(d=2\), the result follows directly from~\cite[Prop.~4.2\((4^\circ)\)]{Chen3}; for completeness, we briefly discuss at the end of this section how the parameter relation \(\beta=2\gamma\) and the time change \(t\mapsto t/2\) yield the SDE in the present notation. For \(d=3\), the proof follows the same argument as in the corresponding 3d TMD case treated in~\cite{MianPathwise}. Accordingly, we first identify the special function $\mathbb Q_t^\gamma$ arising in the present 3d GSD setting and establish the estimates and identities needed in that argument. The remaining steps are the same as in the 3d TMD case, and we indicate the corresponding references where they are used. The proof of the Lemma~\ref{lemmaLocalizedSDEAwayOrigin} is at the end of this section.

For \(t>0\) and \(x\in\mathbb R^3\setminus\{0\}\), recall that \(\Psi_t^\gamma(x)=e^{\gamma^2 t/2}\psi_\gamma(x)\) and define $\hat\Psi_t^\gamma(x):=x\Psi_t^\gamma(x)$. Define the map
\(\mathbb Q_t^\gamma:\mathbb R^3\setminus\{0\}\to\mathbb R^3\) by
\begin{align} \label{DefUpsilon1}
\mathbb Q_t^\gamma(x)
:=
\frac{\big(g_t *\hat \Psi_0^\gamma\big)(x)}{\Psi_t^\gamma(x)}
=
\frac{x\big(g_t * \psi_\gamma\big)(x)}{\Psi_t^\gamma(x)} 
+
\frac{t \nabla \big(g_t * \psi_\gamma\big)(x)}{\Psi_t^\gamma(x)}
\end{align}
where the second equality uses that $\nabla g_t(x-y)=\frac{y-x}{t}g_t(x-y)$.

\begin{lemma}\label{LemmaL2IncrementY}
Fix \(T,\gamma>0\) and \(x\in\mathbb R^3\setminus\{0\}\). Then
\begin{align}
\sup_{0<t\le T}
\int_{\mathbb R^3}
q_t^\gamma(x,y)
\left|
\mathbb Q_{T-t}^{\gamma}(y)-\mathbb Q_T^\gamma(x)
\right|^2
\,dy
<
\infty. \nonumber
\end{align}
\end{lemma}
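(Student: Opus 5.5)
Since $|\mathbb Q_{T-t}^\gamma(y)-\mathbb Q_T^\gamma(x)|^2\le 2|\mathbb Q_{T-t}^\gamma(y)|^2+2|\mathbb Q_T^\gamma(x)|^2$ and $\int q_t^\gamma(x,y)\,dy=1$, it suffices to show
\[
\sup_{0<t\le T}\int_{\mathbb R^3}q_t^\gamma(x,y)\,\bigl|\mathbb Q_{T-t}^\gamma(y)\bigr|^2\,dy<\infty .
\]
We write $\mathbb Q_u^\gamma(y)=yS_u^\gamma(y)+u\,S_u^\gamma(y)\,\widehat b_u^\gamma(y)$. This follows from~\eqref{DefUpsilon1}, the identity $(g_u*\psi_\gamma)/\Psi_u^\gamma=S_u^\gamma$, and $\nabla(g_u*\psi_\gamma)=(g_u*\psi_\gamma)\widehat b_u^\gamma$. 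Using $0\le S_u^\gamma\le1$, we obtain
\[
|\mathbb Q_u^\gamma(y)|\le |y|+u\,|\widehat b_u^\gamma(y)|.
\]
By Lemma~\ref{LemmaHatDriftBound}, $u|\widehat b_u^\gamma(y)|\le C(u+\sqrt u\,\mathbf 1_{\{|y|\le\sqrt u\}}+u|y|^{-1}\mathbf 1_{\{|y|>\sqrt u\}})\le C(\gamma,T)$ for $0<u\le T$, because $u/|y|<\sqrt u$ on $\{|y|>\sqrt u\}$. The factor $u=T-t$ thus exactly cancels the singular part of the regularized drift. This yields the uniform pointwise bound
\[
|\mathbb Q_{T-t}^\gamma(y)|^2\le C(\gamma,T)\,(1+|y|^2),
\]
which holds uniformly in $t\in(0,T]$.

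The problem then reduces to a uniform second-moment bound $\sup_{0<t\le T}\int q_t^\gamma(x,y)|y|^2\,dy<\infty$; this is the estimate later invoked in the proof of Lemma~\ref{LemGroundStateVisitation}. By~\eqref{DefTransDensity} and~\eqref{PtgammaBounds},
\[
q_t^\gamma(x,y)|y|^2\le \frac{e^{-\gamma^2t/2}}{\psi_\gamma(x)}\Bigl(g_t(x-y)+\frac{C}{\sqrt t\,|x||y|}e^{-\frac{|x|^2}{2t}-\frac{|y|^2}{2t}}\Bigr)\psi_\gamma(y)|y|^2 .
\]
We also have $\psi_\gamma(y)|y|^2\le C|y|$, and $\psi_\gamma(x)^{-1}\le C(\gamma,x)$ since $x\neq0$ is fixed. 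The Gaussian term then contributes at most $C\int g_t(x-y)|y|\,dy\le C(|x|+\sqrt{3t})\le C(x,T)$. The singular term contributes
\[
\frac{C}{\sqrt t\,|x|}\,e^{-|x|^2/(2t)}\int_{\mathbb R^3} e^{-|y|^2/(2t)}\,dy=C\,\frac{t}{|x|}\,e^{-|x|^2/(2t)},
\]
which is bounded uniformly in $t\le T$. Taking the supremum over $t$ completes the argument.

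The main point to check carefully is the pointwise cancellation $u|\widehat b_u^\gamma|\le C$. This cancellation is what avoids any of the delicate time-integrability issues of Lemma~\ref{LemSquareIntegrability}. A secondary point is to verify that the upper bound~\eqref{PtgammaBounds} is valid uniformly in $t\in(0,T]$. Near $t=T$, the argument $T-t$ of $\mathbb Q$ tends to $0$; there $S_0^\gamma\equiv1$ and $\mathbb Q_0^\gamma(y)=y$, which is consistent with the bound.
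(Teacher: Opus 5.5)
Your proof is correct. Its overall structure matches the paper's. You reduce via $|a-b|^2\le 2|a|^2+2|b|^2$ to a pointwise bound on $|\mathbb Q_u^\gamma(y)|$, and then prove $\sup_{0<t\le T}\int q_t^\gamma(x,y)|y|^2\,dy<\infty$ using \eqref{PtgammaBounds}. Your treatment of this second moment is exactly the paper's: the Gaussian term is bounded by $|x|+\sqrt{3t}$ and the singular term by $C\,t\,e^{-|x|^2/(2t)}/|x|$.

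The difference is in the pointwise bound.
\begin{itemize}
\item The paper never passes through $\widehat b_u^\gamma$. It writes $|\mathbb Q_u^\gamma(y)|\le e^{-\gamma^2u/2}\psi_\gamma(y)^{-1}\int g_u(y-z)\,|z|\psi_\gamma(z)\,dz$. It then uses $|z|\psi_\gamma(z)=\sqrt{\gamma/(2\pi)}\,e^{-\gamma|z|}\le\sqrt{\gamma/(2\pi)}\,e^{-\gamma e\cdot z}$ with $e=y/|y|$. Completing the square evaluates the Gaussian exponential moment exactly, giving the sharp, constant-free bound $|\mathbb Q_u^\gamma(y)|\le|y|$ for every $u>0$.
\item You instead use the identity $\mathbb Q_u^\gamma(y)=S_u^\gamma(y)\bigl(y+u\,\widehat b_u^\gamma(y)\bigr)$ together with Lemma~\ref{LemmaHatDriftBound}. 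The factor $u$ absorbs the $u^{-1/2}$ and $|y|^{-1}$ singularities, which gives the weaker but sufficient bound $|\mathbb Q_u^\gamma(y)|\le|y|+C(\gamma,T)$.
\end{itemize}

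The paper's argument is short, self-contained and sharper. However, it exploits the fact that $|z|\psi_\gamma(z)$ is a pure exponential, which is special to $d=3$. Yours leans on the more laborious drift lemma. In exchange it only needs $0\le S_u^\gamma\le1$ and an estimate of the form $u\,|\widehat b_u^\gamma(y)|\le C(1+|y|)$. It also makes explicit how $\mathbb Q^\gamma$ is built from the avoidance probability $S^\gamma$ and the regularized drift.
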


\begin{proof}
Recall~\eqref{EigenFunctionEigenValue} that $\psi_\gamma(z)=C(\gamma) e^{-\gamma|z|}/|z|$, where $C(\gamma)=\sqrt{\gamma /(2\pi)}$. Thus, using the definition~\eqref{DefUpsilon1}, together with
\(\hat\Psi_0^\gamma(z)=z\psi_\gamma(z)\) and
\(\Psi_u^\gamma(y)=e^{\frac{\gamma^2u}{2}}\psi_\gamma(y)\), we obtain
\begin{align}
\left|\mathbb Q_u^\gamma(y)\right|
&=
\frac{e^{-\frac{\gamma^2u}{2}}}{\psi_\gamma(y)}
\left|
\int_{\mathbb R^3}
g_u(y-z)z\psi_\gamma(z)\,dz
\right| \nonumber \\
&\le
\frac{e^{-\frac{\gamma^2u}{2}}}{\psi_\gamma(y)}
\displaystyle\int_{\mathbb R^3}
g_u(y-z)|z|\psi_\gamma(z)\,dz
=
\frac{C(\gamma)\, e^{-\frac{\gamma^2u}{2}}}{\psi_\gamma(y)}
\int_{\mathbb R^3}
g_u(y-z)e^{-\gamma|z|}
\,dz .
\nonumber
\end{align}
Let \(e:=y/|y|\). Then Cauchy--Schwarz gives
\(e\cdot z\le |e||z|=|z|\), which implies the inequality below:
\begin{align}
\int_{\mathbb R^3}
g_u(y-z)e^{-\gamma|z|}
\,dz
&\le
\int_{\mathbb R^3}
g_u(y-z)e^{-\gamma e\cdot z}
\,dz
\nonumber\\
&=
e^{-\gamma e\cdot y}
\int_{\mathbb R^3}
\frac{1}{(2\pi u)^{3/2}}
e^{
-\frac{|z-y|^2}{2u}
-\gamma e\cdot(z-y)
}
\,dz
\nonumber\\
&=
e^{-\gamma e\cdot y+\frac{\gamma^2u}{2}}
\int_{\mathbb R^3}
\frac{1}{(2\pi u)^{3/2}}
e^{
-\frac{|z-y+\gamma u e|^2}{2u}
}
\,dz
=
e^{-\gamma e\cdot y+\frac{\gamma^2u}{2}}
=
e^{\frac{\gamma^2u}{2}}e^{-\gamma|y|}, \nonumber
\end{align}
where the second equality follows by completing the square and using \(|e|=1\), while the final equality uses \(e\cdot y=|y|\). Substituting the above into the previous display yields
\(\left|\mathbb Q_u^\gamma(y)\right|\le |y|\). This, together with the elementary inequality \( |a-b|^2\le 2|a|^2+2|b|^2 \), yields
\[
\int_{\mathbb R^3}
q_t^\gamma(x,y)
\left|
\mathbb Q_{T-t}^{\gamma}(y)-\mathbb Q_T^\gamma(x)
\right|^2dy
\le
2\int_{\mathbb R^3}
q_t^\gamma(x,y)|y|^2dy
+
2|\mathbb Q_T^\gamma(x)|^2 .
\]
Thus, it remains to show that $\sup_{0<t\le T}
\int_{\mathbb R^3}
q_t^\gamma(x,y)|y|^2\,dy
<\infty$. Using~\eqref{DefTransDensity}, we obtain the equality below.
\begin{align}
\int_{\mathbb R^3}
q_t^\gamma(x,y)|y|^2\,dy
&=
\frac{e^{-\gamma^2t/2}}{\psi_\gamma(x)}
\int_{\mathbb R^3}
|y|^2
\psi_\gamma(y)
p_t^\gamma(x,y)
\,dy \nonumber \\
&\le
\frac{e^{-\gamma^2t/2}}{\psi_\gamma(x)}
\int_{\mathbb R^3}
|y|^2
\psi_\gamma(y)
g_t(x-y)
\,dy 
+
\frac{C e^{-\gamma^2t/2}}{\psi_\gamma(x)\sqrt{t}}
\frac{e^{-\frac{|x|^2}{2t}}}{|x|}
\int_{\mathbb R^3}
|y|
\psi_\gamma(y) e^{-\frac{|y|^2}{2t}}
\,dy , \nonumber
\end{align}
where the inequality follows from the estimate~\eqref{PtgammaBounds}. We estimate the
two integrals on the right-hand side uniformly for \(0<t\le T\). \vspace{.2cm}

\noindent \textit{First integral.} Using~\eqref{EigenFunctionEigenValue}, we have
\[
\int_{\mathbb R^3} |y|^2\psi_\gamma(y)g_t(x-y)\,dy
=
\sqrt{\frac{\gamma}{2\pi}}
\int_{\mathbb R^3}
|y|e^{-\gamma |y|}
g_t(x-y)\,dy
\le
\sqrt{\frac{\gamma}{2\pi}}
\int_{\mathbb R^3}
|y| g_t(x-y)\,dy
=
\sqrt{\frac{\gamma}{2\pi}} \mathbb E\!\left[|x+B_t|\right],
\]
where \(B_t\sim N(0,tI_3)\). Moreover,
\[
\mathbb E\!\left[|x+B_t|\right]
\le
|x|+\mathbb E\!\left[|B_t|\right]
\le
|x|
+
\left(
\mathbb E\!\left[|B_t|^2\right]
\right)^{1/2}
=
|x|+\sqrt{3t}
\le
|x|+\sqrt{3T}.
\]
Therefore, $\sup_{0<t\le T}
\int_{\mathbb R^3}
|y|^2\psi_\gamma(y)g_t(x-y)\,dy
<\infty$.\vspace{.2cm}

\noindent \textit{Second integral.} Again using~\eqref{EigenFunctionEigenValue},
\begin{align}
\int_{\mathbb R^3}
|y|\psi_\gamma(y)e^{-\frac{|y|^2}{2t}}
\,dy
=
\sqrt{\frac{\gamma}{2\pi}}
\int_{\mathbb R^3}
e^{-\gamma|y|}
e^{-\frac{|y|^2}{2t}}
\,dy
\le
\sqrt{\frac{\gamma}{2\pi}}
\int_{\mathbb R^3}
e^{-\frac{|y|^2}{2t}}
\,dy
=
\sqrt{\frac{\gamma}{2\pi}}
(2\pi t)^{3/2}. \nonumber
\end{align}
Consequently,
\begin{align}
\frac{C e^{-\gamma^2t/2}}{\psi_\gamma(x)\sqrt{t}}
\frac{e^{-\frac{|x|^2}{2t}}}{|x|}
\int_{\mathbb R^3}
|y|
\psi_\gamma(y)
e^{-\frac{|y|^2}{2t}}
\,dy
\le
C(\gamma,x)\,
t\,
e^{-\gamma^2t/2}
e^{-\frac{|x|^2}{2t}}
\le
C(\gamma,x)T , \nonumber
\end{align}
uniformly for \(0<t\le T\).
\end{proof}

Given \(\gamma, t>0\), the radial symmetry of \(\mathbb Q_t^\gamma\) implies that there exists a unique scalar
function
\(\overline{\mathbb Q}_t^\gamma:(0,\infty)\to\mathbb R\) such that $\mathbb Q_t^\gamma(x)
=
\overline{\mathbb Q}_t^\gamma(|x|)x$ for $x\in\mathbb R^3\setminus\{0\}$. Consequently, the Jacobian matrix
\(\nabla^\dagger\mathbb Q_t^\gamma(x)\) has the form
\begin{align}
\nabla^\dagger\mathbb Q_t^\gamma(x)
=
\lambda_t^\gamma(x)\,P_x
+
\mu_t^\gamma(x)\,(I-P_x), \nonumber
\end{align}
where \(P_x:\mathbb R^3\to\mathbb R^3\) denotes the orthogonal projection onto \(\operatorname{span}\{x\}\), that is $ P_x=\frac{x\otimes x}{|x|^2}$, and the corresponding eigenvalues are
\begin{align}\label{SigmaForm}
\lambda_t^\gamma(x)
:=
\overline{\mathbb Q}_t^\gamma(|x|)
+
|x|\frac{d}{dr}\overline{\mathbb Q}_t^\gamma(r)\big|_{r=|x|},
\qquad
\mu_t^\gamma(x)
:=
\overline{\mathbb Q}_t^\gamma(|x|),
\end{align} 
For a fixed \(x\in\R^3\setminus\{0\}\), let \(e_1:=\frac{x}{|x|}\),
and extend it to an orthonormal basis
\(\{e_1,e_2,e_3\}\) of \(\mathbb R^3\). Since \(P_x\) is the orthogonal projection onto \(\operatorname{span}\{x\}=\operatorname{span}\{e_1\}\), we have \(P_xe_1=e_1\), \(P_xe_2=0\), and \(P_xe_3=0\). Consequently,
\[
\nabla^\dagger\mathbb Q_t^\gamma(x)e_1
=
\lambda_t^\gamma(x)e_1,
\qquad
\nabla^\dagger\mathbb Q_t^\gamma(x)e_i
=
\mu_t^\gamma(x)e_i,
\quad i=2,3.
\]
Hence, with respect to the basis \(\{e_1,e_2,e_3\}\), the Jacobian \(\nabla^\dagger\mathbb Q_t^\gamma(x)\) is a \(3\times 3\) diagonal matrix with diagonal entries \(\lambda_t^\gamma(x)\), \(\mu_t^\gamma(x)\), and \(\mu_t^\gamma(x)\). Since \(\lambda_t^\gamma(x)>0\) and \(\mu_t^\gamma(x)>0\), the matrix
\(\nabla^\dagger\mathbb Q_t^\gamma(x)\) is invertible with
\begin{align}\label{SigmaInverse}
\bigl(\nabla^\dagger\mathbb Q_t^\gamma(x)\bigr)^{-1}
=
\frac{1}{\lambda_t^\gamma(x)}\,P_x
+
\frac{1}{\mu_t^\gamma(x)}\,(I-P_x).
\end{align}

Thus the function $\mathbb Q_t^\gamma(x)$ has the same eigenvalue decomposition and the inverse property as that of the corresponding transformation in the 3D TMD;~\eqref{SigmaForm}--~\eqref{SigmaInverse} correspond to those of~\cite[Eqs.~(3.2)--(3.3)]{MianPathwise}. Moreover, it can be checked that the function $\mathbb Q_t^\gamma(x)$ also satisfies~\cite[Eqs.~(3.4)--(3.6)]{MianPathwise} in the current setting and hence, using the same argument as in the proof of~\cite[Lem.~3.3]{MianPathwise}, we get that for fixed \(T,\gamma>0\) and \(x\in\mathbb R^3\setminus\{0\}\), the \(\mathbb R^3\)-valued process
\[
Y_t^{T,\gamma}
=
\mathbb Q_{T-t}^{\gamma}(X_t),
\qquad 0\le t\le T,
\]
is a continuous square-integrable \(\mathbb P_x^{T,\gamma}\)-martingale with respect to the augmented filtration \(\{\mathcal F_t^{T,x}\}_{t\in[0,T]}\).

Thus the function \(\mathbb Q_t^\gamma(x)\) has the same eigenvalue decomposition and the inverse property as that of the corresponding transformation in the 3D TMD;~\eqref{SigmaForm}--\eqref{SigmaInverse} correspond to those of~\cite[Eqs.~(3.2)--(3.3)]{MianPathwise}. Moreover, it can be checked that the function \(\mathbb Q_t^\gamma(x)\) also satisfies~\cite[Eqs.~(3.4)--(3.6)]{MianPathwise} in the current setting and hence, using the same argument as in the proof of~\cite[Lem.~3.3]{MianPathwise}, we obtain that, for fixed \(T,\gamma>0\) and \(x\in\mathbb R^3\setminus\{0\}\), the \(\mathbb R^3\)-valued process
\[
Y_t^{T,\gamma}
=
\mathbb Q_{T-t}^{\gamma}(X_t),
\qquad
0\le t\le T,
\]
is a continuous local \(\mathbb P_x^{T,\gamma}\)-martingale with respect to the augmented filtration \(\{\mathcal F_t^{T,x}\}_{t\in[0,T]}\). Since \(Y_0^{T,\gamma}=\mathbb Q_T^\gamma(x)\), the elementary inequality
\(|a+b|^2\le2|a|^2+2|b|^2\) together with Minkowski's inequality and Lemma~\ref{LemmaL2IncrementY}, therefore gives
\begin{align}
\sup_{0\le t\le T}
\left(
\mathbb E_x^{T,\gamma}
\left[
\left|
Y_t^{T,\gamma}
\right|^2
\right]
\right)^{1/2}
&\le
\sup_{0\le t\le T}
\left(
\mathbb E_x^{T,\gamma}
\left[
\left|
Y_t^{T,\gamma}-Y_0^{T,\gamma}
\right|^2
\right]
\right)^{1/2}
+
\left|Y_0^{T,\gamma}\right|
<
\infty. \nonumber
\end{align}
Consequently, \(Y^{T,\gamma}\) is a continuous square-integrable
\(\mathbb P_x^{T,\gamma}\)-martingale. Moreover, for every \(u\in\mathbb R^3\), the quadratic variation of the real-valued martingale \(\{u\cdot Y_t^{T,\gamma}\}_{t\in[0,T]}\) is given by
\begin{align}
\big\langle u\cdot Y^{T,\gamma}\big\rangle_t
=
\int_0^t
\big\|\nabla^\dagger\mathbb Q_{t-s}^{\gamma}(X_s)u\big\|_2^2\,ds,
\qquad 0\le t\le T. \nonumber
\end{align}
Consequently, proceeding as in the proof of~\cite[Cor.~3.5]{MianPathwise}, we obtain that the process
\begin{align}\label{DefSBMW}
W_t^{T,\gamma}
:=
\int_0^t
\mathbf 1_{\{X_s\neq0\}}
\big(\nabla^\dagger\mathbb Q_{t-s}^{\gamma}(X_s)\big)^{-1}
\,dY_s^{T,\gamma},
\qquad t \in [0, T],
\end{align}
is a three-dimensional standard Brownian motion with respect to
\(\{\mathcal F_t^{T,x}\}_{t \in [0, T]}\) under \(\mathbb P_x^{T,\gamma}\), and 
\begin{align} \label{dYWrelation}
dY_t^{T,\gamma} = \nabla^\dagger\mathbb Q_{T-t}^{\gamma}(X_t)\,dW_t^{T,\gamma}
\end{align}

\begin{lemma}\label{LemmaInverseTimeDerivative}
For every \(t>0\) and \(x\in\mathbb R^3\setminus\{0\}\),
\begin{align}
\frac{\partial}{\partial t}\mathbb Q_t^\gamma(x)
=
\frac12\Delta\mathbb Q_t^\gamma(x)
+
\big(\nabla^\dagger\mathbb Q_t^\gamma(x)\big)b^\gamma(x). \nonumber
\end{align}
\end{lemma}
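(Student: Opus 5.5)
The plan is to run the same computation that produced~\eqref{PDEforS}, but now for the vector-valued function $\mathbb Q_t^\gamma$. The starting point is the first expression in~\eqref{DefUpsilon1}:
\[
\Psi_t^\gamma(x)\,\mathbb Q_t^\gamma(x)=\bigl(g_t*\hat\Psi_0^\gamma\bigr)(x),
\qquad \hat\Psi_0^\gamma(z)=z\,\psi_\gamma(z).
\]
Each component $z_i\psi_\gamma(z)$ is locally integrable and has at most exponential growth. So the right-hand side is a smooth solution of the heat equation on $(0,\infty)\times\mathbb R^3$, namely
\[
\partial_t(g_t*\hat\Psi_0^\gamma)=\tfrac12\Delta(g_t*\hat\Psi_0^\gamma).
\]
Differentiation under the integral is justified by the Gaussian decay of $g_t$ and of its derivatives. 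We apply this componentwise, $j=1,2,3$, to $Q^j:=(\mathbb Q_t^\gamma)_j$.

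Next we expand both sides using the product rule. On the left,
\[
\partial_t(\Psi_t^\gamma Q^j)=\lambda_\gamma\Psi_t^\gamma Q^j+\Psi_t^\gamma\,\partial_tQ^j .
\]
On the right, for $x\neq0$,
\[
\tfrac12\Delta(\Psi_t^\gamma Q^j)=\tfrac12 Q^j\Delta\Psi_t^\gamma+\nabla\Psi_t^\gamma\cdot\nabla Q^j+\tfrac12\Psi_t^\gamma\Delta Q^j .
\]
Away from the origin we have $\tfrac12\Delta\Psi_t^\gamma=\lambda_\gamma\Psi_t^\gamma$ with $\lambda_\gamma=\gamma^2/2$. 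In $d=3$ this can be checked directly, since
\[
\Delta\!\left(\frac{e^{-\gamma r}}{r}\right)=\gamma^2\,\frac{e^{-\gamma r}}{r}\quad\text{for } r>0 .
\]
The terms $\lambda_\gamma\Psi_t^\gamma Q^j$ therefore cancel. Dividing by $\Psi_t^\gamma>0$ and using $\nabla\Psi_t^\gamma/\Psi_t^\gamma=\nabla\log\psi_\gamma=b^\gamma$ gives
\[
\partial_tQ^j=\tfrac12\Delta Q^j+b^\gamma\cdot\nabla Q^j .
\]
Finally, we identify $b^\gamma\cdot\nabla Q^j=\sum_i\partial_iQ^j\,b^\gamma_i$ with the $j$-th component of $(\nabla^\dagger\mathbb Q_t^\gamma)\,b^\gamma$. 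This uses the Jacobian convention in which the $(j,i)$ entry of $\nabla^\dagger\mathbb Q_t^\gamma$ is $\partial_iQ^j$, consistent with~\eqref{SigmaForm}. In the present radial case the Jacobian is symmetric anyway, so transpose ambiguity does not matter. This proves the identity.

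The only point needing care is regularity. We must know that $\mathbb Q_t^\gamma$ is $C^{1,2}$ on $(0,\infty)\times(\mathbb R^3\setminus\{0\})$ and that the heat equation holds for the convolution with the singular datum $z\psi_\gamma(z)$. The singularity is removable here, since $|z\psi_\gamma(z)|\le C$. Hence $g_t*\hat\Psi_0^\gamma$ is smooth in $(t,x)$ with the standard dominated-convergence bounds, and $\Psi_t^\gamma$ is smooth and positive off the origin. No boundary term at the origin arises, because we never integrate by parts against $\psi_\gamma$; we only differentiate the Gaussian.
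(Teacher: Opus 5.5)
Your proof is correct and follows essentially the same route as the paper. Both arguments use that $g_t*\hat\Psi_0^\gamma=\Psi_t^\gamma\mathbb Q_t^\gamma$ solves the heat equation, expand the Laplacian of the product, and cancel the $\lambda_\gamma$ terms using $\Delta\Psi_t^\gamma=\gamma^2\Psi_t^\gamma$ off the origin together with $\nabla\log\Psi_t^\gamma=b^\gamma$. The only differences are cosmetic: the paper differentiates the quotient in $t$ rather than the product, and you add the regularity justification that the paper leaves implicit.
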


\begin{proof}
Recall that $\Psi_t^\gamma(x)=e^{\gamma^2t/2}\psi_\gamma(x)$, and \(g_t(x-y):= (2\pi t)^{-3/2} e^{-|x-y|^2/(2t)}\) is the free heat kernel, we have
\[
\frac{\partial}{\partial t}\Psi_t^\gamma(x)
=
\frac{\gamma^2}{2}\Psi_t^\gamma(x),
\qquad
\frac{\partial}{\partial t}\big(g_t*\hat\Psi_0^\gamma\big)(x)
=
\frac12\Delta\big(g_t*\hat\Psi_0^\gamma\big)(x).
\]
Therefore, using~\eqref{DefUpsilon1}, we obtain
\begin{align*}
\frac{\partial}{\partial t}\mathbb Q_t^\gamma(x)
&=
\frac{
\Psi_t^\gamma(x)\,
\frac{\partial}{\partial t}\big(g_t*\hat\Psi_0^\gamma\big)(x)
-
\big(g_t*\hat\Psi_0^\gamma\big)(x)\,
\frac{\partial}{\partial t}\Psi_t^\gamma(x)
}{
\big(\Psi_t^\gamma(x)\big)^2
} 
=
\frac{
\frac12\Delta\big(g_t*\hat\Psi_0^\gamma\big)(x)
}{
\Psi_t^\gamma(x)
}
-
\frac{\gamma^2}{2}\mathbb Q_t^\gamma(x).
\end{align*}
Since $\big(g_t*\hat\Psi_0^\gamma\big)(x) = \Psi_t^\gamma(x)\mathbb Q_t^\gamma(x)$, we also have
\begin{align*}
\Delta\big(g_t*\hat\Psi_0^\gamma\big)(x)
=
\Delta\big(\Psi_t^\gamma\mathbb Q_t^\gamma\big)(x) 
=
\Psi_t^\gamma(x)\Delta\mathbb Q_t^\gamma(x)
+
2\big(\nabla^\dagger\mathbb Q_t^\gamma(x)\big)\nabla\Psi_t^\gamma(x)
+
\mathbb Q_t^\gamma(x)\Delta\Psi_t^\gamma(x).
\end{align*}
Using that $b^\gamma(x)=\nabla\log\Psi_t^\gamma(x)$ and $\Delta\Psi_t^\gamma(x)=\gamma^2\Psi_t^\gamma(x)$, the above gives
\begin{align}
\frac{
\Delta\big(g_t*\hat\Psi_0^\gamma\big)(x)
}{
\Psi_t^\gamma(x)
}
=
\Delta\mathbb Q_t^\gamma(x)
+
2\big(\nabla^\dagger\mathbb Q_t^\gamma(x)\big)b^\gamma(x)
+
\gamma^2\mathbb Q_t^\gamma(x). \nonumber
\end{align}
Combining the above observations yields the desired identity.
\end{proof}

Recall that, for \(t\ge0\), the function
\(\mathbb Q_t^\gamma:\mathbb R^3\setminus\{0\}\to\mathbb R^3\setminus\{0\}\)
is defined in~\eqref{DefUpsilon1}. Since
\(\nabla^\dagger\mathbb Q_t^\gamma(x)\)
is invertible for every \(x\neq0\), the map \(\mathbb Q_t^\gamma\) is a
\(C^2\)-diffeomorphism of \(\mathbb R^3\setminus\{0\}\). We denote its inverse by
\[
\overleftarrow{\mathbb Q}_t^\gamma:\mathbb R^3\setminus\{0\}\to\mathbb R^3\setminus\{0\}.
\]
In the following lemma use the conventions that for a twice continuously differentiable map \(\phi:\mathbb R^3\to\mathbb R^3\) the second derivative \(\Hess\phi(x)\) is viewed as an element of
\(\mathbb R^3\otimes M^{3\times3}\) such that
\[
(u^\dagger\otimes I_3)\,\Hess\phi(x)
\]
coincides with the Hessian matrix of the scalar-valued function
\(u^\dagger\phi=u\cdot\phi\) for every \(u\in\mathbb R^3\).

\begin{lemma}\label{LemmaInversePhiIdentities}
For every \(t>0\) and \(x\in\mathbb R^3\setminus\{0\}\),
\begin{align}
&\big(\nabla^\dagger \overleftarrow{\mathbb Q}_t^\gamma\big)\big(\mathbb Q_t^\gamma(x)\big)
\,\nabla^\dagger\mathbb Q_t^\gamma(x)
=
I_3 , \label{PDEUp1}
\\
&
\Big(\frac{\partial}{\partial t}\overleftarrow{\mathbb Q}_t^\gamma\Big)
\big(\mathbb Q_t^\gamma(x)\big)
=
- b^\gamma(x)
+
\frac12
\Tr_2
\Big[
\big(I_3\otimes (\nabla^\dagger\mathbb Q_t^\gamma(x))^\dagger\big)
(\Hess \overleftarrow{\mathbb Q}_t^\gamma)
\big(\mathbb Q_t^\gamma(x)\big)
\big(I_3\otimes \nabla^\dagger\mathbb Q_t^\gamma(x)\big)
\Big]. \label{PDEUp2}
\end{align}
\end{lemma}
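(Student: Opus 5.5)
The plan is to derive both identities from the inverse-function relation
\[
\overleftarrow{\mathbb Q}_t^\gamma\big(\mathbb Q_t^\gamma(x)\big)=x,\qquad t>0,\ x\in\mathbb R^3\setminus\{0\},
\]
by differentiating it in \(x\) and in \(t\), and then inserting the parabolic equation of Lemma~\ref{LemmaInverseTimeDerivative}. We use throughout that \(\mathbb Q_t^\gamma\) is a \(C^2\)-diffeomorphism of \(\mathbb R^3\setminus\{0\}\), which holds because \(\nabla^\dagger\mathbb Q_t^\gamma\) is invertible by~\eqref{SigmaInverse}. We also use that \((t,x)\mapsto\mathbb Q_t^\gamma(x)\) is \(C^{1,2}\) on \((0,\infty)\times(\mathbb R^3\setminus\{0\})\), since it is a ratio of Gaussian convolutions and the smooth positive function \(\Psi_t^\gamma\). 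By the inverse function theorem applied jointly in \((t,x)\), the map \((t,z)\mapsto\overleftarrow{\mathbb Q}_t^\gamma(z)\) then inherits \(C^{1,2}\) regularity. This justifies all differentiations below.

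\textbf{Identity~\eqref{PDEUp1}.} Differentiate the relation in \(x\) using the chain rule. This gives \((\nabla^\dagger\overleftarrow{\mathbb Q}_t^\gamma)(\mathbb Q_t^\gamma(x))\,\nabla^\dagger\mathbb Q_t^\gamma(x)=I_3\) directly.

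\textbf{Identity~\eqref{PDEUp2}.} Differentiate the relation in \(t\):
\[
\Big(\tfrac{\partial}{\partial t}\overleftarrow{\mathbb Q}_t^\gamma\Big)\big(\mathbb Q_t^\gamma(x)\big)
+(\nabla^\dagger\overleftarrow{\mathbb Q}_t^\gamma)\big(\mathbb Q_t^\gamma(x)\big)\,\tfrac{\partial}{\partial t}\mathbb Q_t^\gamma(x)=0 .
\]
Substitute Lemma~\ref{LemmaInverseTimeDerivative} for \(\partial_t\mathbb Q_t^\gamma\). By~\eqref{PDEUp1}, the drift term becomes \((\nabla^\dagger\overleftarrow{\mathbb Q}_t^\gamma)(\mathbb Q_t^\gamma(x))\,\nabla^\dagger\mathbb Q_t^\gamma(x)\,b^\gamma(x)=b^\gamma(x)\). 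Hence
\[
\Big(\tfrac{\partial}{\partial t}\overleftarrow{\mathbb Q}_t^\gamma\Big)\big(\mathbb Q_t^\gamma(x)\big)=-b^\gamma(x)-\tfrac12(\nabla^\dagger\overleftarrow{\mathbb Q}_t^\gamma)\big(\mathbb Q_t^\gamma(x)\big)\,\Delta\mathbb Q_t^\gamma(x).
\]

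It remains to rewrite the last term as the partial trace in~\eqref{PDEUp2}. Fix \(u\in\mathbb R^3\) and take a second derivative of the scalar identity \(u\cdot\overleftarrow{\mathbb Q}_t^\gamma(\mathbb Q_t^\gamma(x))=u\cdot x\). Its Hessian in \(x\) vanishes, and the chain rule for second derivatives gives
\[
0=(\nabla^\dagger\mathbb Q_t^\gamma)^\dagger\,\big[(u^\dagger\otimes I_3)\Hess\overleftarrow{\mathbb Q}_t^\gamma(\mathbb Q_t^\gamma(x))\big]\,\nabla^\dagger\mathbb Q_t^\gamma
+\sum_k\big(u^\dagger\nabla^\dagger\overleftarrow{\mathbb Q}_t^\gamma(\mathbb Q_t^\gamma(x))\big)_k\,\Hess(\mathbb Q_t^\gamma)_k(x).
\]
Take the trace of this matrix identity. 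Since \(\Tr\Hess(\mathbb Q_t^\gamma)_k=\Delta(\mathbb Q_t^\gamma)_k\), this yields
\[
-u^\dagger(\nabla^\dagger\overleftarrow{\mathbb Q}_t^\gamma)\,\Delta\mathbb Q_t^\gamma
=\Tr\big[(\nabla^\dagger\mathbb Q_t^\gamma)^\dagger\,((u^\dagger\otimes I_3)\Hess\overleftarrow{\mathbb Q}_t^\gamma)\,\nabla^\dagger\mathbb Q_t^\gamma\big].
\]
The right-hand side is exactly \(u^\dagger\) applied to
\[
\Tr_2\big[(I_3\otimes(\nabla^\dagger\mathbb Q_t^\gamma)^\dagger)\,\Hess\overleftarrow{\mathbb Q}_t^\gamma\,(I_3\otimes\nabla^\dagger\mathbb Q_t^\gamma)\big],
\]
by the stated convention for \(\Hess\) and the definition of the partial trace over the matrix factor. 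Since \(u\) is arbitrary, substituting this into the previous display gives~\eqref{PDEUp2}.

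\textbf{Main obstacle.} The only delicate step is the bookkeeping for the tensor/partial-trace notation: checking that \((u^\dagger\otimes I_3)\) commutes past the \(I_3\otimes\cdot\) factors so that the convention for \(\Hess\) applies componentwise. I would verify this first in coordinates with basis vectors \(u=e_i\). The analytic part is routine once the regularity noted at the start is in hand.
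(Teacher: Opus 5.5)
Your proposal is correct and follows the paper's proof: differentiate $\overleftarrow{\mathbb Q}_t^\gamma(\mathbb Q_t^\gamma(x))=x$ in $x$ and in $t$, substitute Lemma~\ref{LemmaInverseTimeDerivative}, use~\eqref{PDEUp1} to reduce the drift term to $b^\gamma(x)$, and identify $-\tfrac12(\nabla^\dagger\overleftarrow{\mathbb Q}_t^\gamma)\,\Delta\mathbb Q_t^\gamma$ with the partial-trace term via the second-order chain rule. The only difference is bookkeeping: you get that last identity by contracting with an arbitrary $u$ and taking traces, whereas the paper writes a Kronecker-product formula for $(\Hess\overleftarrow{\mathbb Q}_t^\gamma)(\mathbb Q_t^\gamma(x))$ and manipulates tensor products, so your componentwise version avoids the paper's somewhat loose tensor-dimension conventions and is easier to check.
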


\begin{proof}
Since \(\overleftarrow{\mathbb Q}_t^\gamma\) is the inverse of \(\mathbb Q_t^\gamma\) on
\(\mathbb R^3\setminus\{0\}\), we have
\[
\overleftarrow{\mathbb Q}_t^\gamma(\mathbb Q_t^\gamma(x))=x,
\qquad x\in\mathbb R^3\setminus\{0\}.
\]
Differentiating with respect to \(x\) and using the chain rule, we obtain~\eqref{PDEUp1}. To prove~\eqref{PDEUp2}, differentiating the identity \(\overleftarrow{\mathbb Q}_t^\gamma(\mathbb Q_t^\gamma(x))=x\) with respect to \(t\) and applying the chain rule yields the first equality below.
\begin{align}\label{I1}
\Big(\frac{\partial}{\partial t}\overleftarrow{\mathbb Q}_t^\gamma\Big)
\big(\mathbb Q_t^\gamma(x)\big)
=\,&
-
\big(\nabla^\dagger\overleftarrow{\mathbb Q}_t^\gamma\big)
\big(\mathbb Q_t^\gamma(x)\big)
\Big(\frac{\partial}{\partial t}\mathbb Q_t^\gamma\Big)(x) \nonumber\\
=\,&
-\big(\nabla^\dagger\mathbb Q_t^\gamma(x)\big)^{-1}
\Big[\frac12\Delta\mathbb Q_t^\gamma(x)
+
\big(\nabla^\dagger\mathbb Q_t^\gamma(x)\big)b^\gamma(x)\Big] \nonumber\\
=\,&
-\frac12
\big(\nabla^\dagger\mathbb Q_t^\gamma(x)\big)^{-1}
\Delta\mathbb Q_t^\gamma(x)
-
b^\gamma(x)
\end{align}
The second equality follows from Lemma~\ref{LemmaInverseTimeDerivative} and~\eqref{PDEUp1}. Also,
\begin{align*}
\big(\Hess\overleftarrow{\mathbb Q}_t^\gamma\big)
\big(\mathbb Q_t^\gamma(x)\big)
=\,&
-
\Big(
\nabla^\dagger\mathbb Q_t^\gamma(x)
\otimes
\big(\nabla^\dagger\mathbb Q_t^\gamma(x)\big)^\dagger
\Big)^{-1}
\big(\Hess\mathbb Q_t^\gamma\big)(x)
\Big(I_3\otimes\nabla^\dagger\mathbb Q_t^\gamma(x)\Big)^{-1} \\
=\,&
-
\Big(
\big(\nabla^\dagger\mathbb Q_t^\gamma(x)\big)^{-1}
\otimes
\big((\nabla^\dagger\mathbb Q_t^\gamma(x))^\dagger\big)^{-1}
\Big)
\big(\Hess\mathbb Q_t^\gamma\big)(x)
\Big(I_3\otimes\big(\nabla^\dagger\mathbb Q_t^\gamma(x)\big)^{-1}\Big),
\end{align*}
where the second equality uses \((B\otimes C)^{-1}=B^{-1}\otimes C^{-1}\) for invertible matrices \(B\) and \(C\). Using the above identity together with
\[
\big(I_3\otimes(\nabla^\dagger\mathbb Q_t^\gamma(x))^\dagger\big)
\Big(
\big(\nabla^\dagger\mathbb Q_t^\gamma(x)\big)^{-1}
\otimes
\big((\nabla^\dagger\mathbb Q_t^\gamma(x))^\dagger\big)^{-1}
\Big)
=
\big(\nabla^\dagger\mathbb Q_t^\gamma(x)\big)^{-1}\otimes I_3 ,
\]
and
\[
\Big(I_3\otimes\big(\nabla^\dagger\mathbb Q_t^\gamma(x)\big)^{-1}\Big)
\big(I_3\otimes\nabla^\dagger\mathbb Q_t^\gamma(x)\big)
=
\Big(I_3I_3\Big)
\otimes
\Big(
\big(\nabla^\dagger\mathbb Q_t^\gamma(x)\big)^{-1}
\nabla^\dagger\mathbb Q_t^\gamma(x)
\Big)
=
I_3\otimes I_3 ,
\]
we obtain the first equality below.
\begin{align}
\Tr_2
\Big[
\big(I_3\otimes(\nabla^\dagger\mathbb Q_t^\gamma(x))^\dagger\big)
(\Hess\overleftarrow{\mathbb Q}_t^\gamma)(\mathbb Q_t^\gamma(x))
\big(I_3\otimes\nabla^\dagger\mathbb Q_t^\gamma(x)\big)
\Big] 
& =
-\Tr_2
\Big[
\Big(
\big(\nabla^\dagger\mathbb Q_t^\gamma(x)\big)^{-1}
\otimes I_3
\Big)
\big(\Hess\mathbb Q_t^\gamma\big)(x)
\Big] \nonumber\\
&\qquad =
-\big(\nabla^\dagger\mathbb Q_t^\gamma(x)\big)^{-1}
\Tr_2\big[(\Hess\mathbb Q_t^\gamma)(x)\big] \nonumber\\
&\qquad =
-\big(\nabla^\dagger\mathbb Q_t^\gamma(x)\big)^{-1}
\big(\Delta\mathbb Q_t^\gamma\big)(x) \nonumber
\end{align}
Substituting the above in~\eqref{I1} yields~\eqref{PDEUp2}.
\end{proof}

\begin{proof}[Proof of Lemma~\ref{lemmaLocalizedSDEAwayOrigin}]
\noindent \(1^\circ\) (\(d=2\)). Recall from the proof of
Lemma~\ref{LemmaCP} that
\(\mathbb P_{x/\sqrt{2}}^{(0),(2\gamma)\downarrow}\) is the law of the
process \(Z=\{Z_t\}_{t\in[0,\infty)}\) and
\(\widetilde X_t^\gamma:=\sqrt{2}\,Z_{t/2}\). By~\cite[Prop.~4.2\((4^\circ)\)]{Chen3}, there
exists a two-dimensional standard Brownian motion
\(\{B_t\}_{t\ge0}\) under
\(\mathbb P_{x/\sqrt{2}}^{(0),(2\gamma)\downarrow}\) such that
\begin{align}
Z_t
=
\frac{x}{\sqrt{2}}
+
B_t
-
2\sqrt{\gamma}
\int_0^t
\frac{
K_1\bigl(2\sqrt{\gamma}|Z_s|\bigr)
}{
K_0\bigl(2\sqrt{\gamma}|Z_s|\bigr)
}
\frac{Z_s}{|Z_s|}
\,ds,
\qquad t\ge0, \nonumber
\end{align}
Here and below, the integrands are assigned the value \(0\) at the
origin; this choice does not affect the integrals, since the zero set
of \(Z\) has zero Lebesgue measure almost surely. Define
\(\widetilde W_t:=\sqrt{2}\,B_{t/2}\) for \(t\ge0\). By Brownian
scaling, \(\widetilde W\) is a two-dimensional standard Brownian
motion under
\(\mathbb P_{x/\sqrt{2}}^{(0),(2\gamma)\downarrow}\). Using the change
of variables \(s=u/2\), we obtain
\begin{align}
\widetilde X_t^\gamma
=
x+\widetilde W_t
-
\sqrt{2\gamma}
\int_0^t
\frac{
K_1\bigl(\sqrt{2\gamma}|\widetilde X_u^\gamma|\bigr)
}{
K_0\bigl(\sqrt{2\gamma}|\widetilde X_u^\gamma|\bigr)
}
\frac{\widetilde X_u^\gamma}
{|\widetilde X_u^\gamma|}
\,du
=
x+\widetilde W_t
+
\int_0^t
b^\gamma(\widetilde X_u^\gamma)
\,du. \nonumber
\end{align}
Using the coordinate process \(\{X_t\}_{t\in[0,T]}\), define
\begin{align} 
W_t^{T,\gamma}
:=
X_t-x-\int_0^t b^\gamma(X_s)\,ds,
\qquad
t\in[0,T]. \nonumber
\end{align}
Since the pair
\(\{(X_t,W_t^{T,\gamma})\}_{t\in[0,T]}\) under
\(\mathbb P_x^{T,\gamma}\) has the same law as
\(\{(\widetilde X_t^\gamma,\widetilde W_t)\}_{t\in[0,T]}\) under
\(\mathbb P_{x/\sqrt{2}}^{(0),(2\gamma)\downarrow}\),
\(W^{T,\gamma}\) is a two-dimensional standard Brownian motion under
\(\mathbb P_x^{T,\gamma}\), with respect to
\(\{\mathcal F_t^{T,x}\}_{t\in[0,T]}\), and the coordinate process
satisfies the desired SDE. \vspace{.2cm}

\noindent \(2^\circ\) (\(d=3\)). Let $W^{T,\gamma}$ be the three-dimensional Brownian motion in~\eqref{DefSBMW}. Recall that the function \(\overleftarrow{\mathbb Q}_t^\gamma\) denotes the inverse of \(\mathbb Q_t^\gamma\), so that $X_t=\overleftarrow{\mathbb Q}_{T-t}^\gamma\big(Y_t^{T,\gamma}\big)$. Since \(Y_t^{T,\gamma}=\mathbb Q_{T-t}^\gamma(X_t)\) and \(X_s\neq0\) for all \(s\in[a,b]\), the function $(t,y)\mapsto \overleftarrow{\mathbb Q}_{T-t}^\gamma(y)$ is \(C^{1,2}\) along the path \(\{(t,Y_t^{T,\gamma}):t\in[a,b]\}\) and It\^o's formula along with~\eqref{PDEUp2} yields the first equality below.
\begin{align*}
dX_t
=&
\big(\nabla^\dagger\overleftarrow{\mathbb Q}_{T-t}^{\gamma}\big)
\big(Y_t^{T,\gamma}\big)\,dY_t^{T,\gamma}
+
b_{T-t}^{\gamma}(X_t)\,dt \nonumber \\
=&
\big(\nabla^\dagger\overleftarrow{\mathbb Q}_{T-t}^{\gamma}\big)
\big(\mathbb Q_{T-t}^{\gamma}(X_t)\big)
\nabla^\dagger\mathbb Q_{T-t}^{\gamma}(X_t)\,dW_t^{T,\gamma}
+
b_{T-t}^{\gamma}(X_t)\,dt
\end{align*}
where the second equality uses~\eqref{dYWrelation} and that \(Y_t^{T,\gamma}=\mathbb Q_{T-t}^{\gamma}(X_t)\). Finally using~\eqref{PDEUp1}, we obtain the desired SDE on $[a,b]$.
\end{proof}

\subsection{Proof of Lemma~\ref{LemmaGaussianGroundStateConvolution}} \label{ProofLemmaGaussianGroundStateConvolution}

\begin{proof}
Set $\nu:=d/2-1$. By~\eqref{DefCompleteBessel}, followed by the change of variables
\(u=\lambda_\gamma a\), we have
\begin{align}
K_\nu\!\left(\sqrt{2\lambda_\gamma}\,|x|\right)
&=
\frac12
\left(
\frac{\sqrt{2\lambda_\gamma}\,|x|}{2}
\right)^\nu
\int_0^\infty
u^{-\nu-1}
e^{-u-\frac{\lambda_\gamma|x|^2}{2u}}
\,du
=
\frac{
|x|^\nu
}{
2(2\lambda_\gamma)^{\nu/2}
}
\int_0^\infty
a^{-d/2}
e^{-\lambda_\gamma a-\frac{|x|^2}{2a}}
\,da. \nonumber
\end{align}
The identity above, together with the form of \(\psi_\gamma\)
in~\eqref{EigenFunctionEigenValueUnified} and
\(g_t(x)=(2\pi t)^{-d/2}e^{-|x|^2/(2t)}\), yields the first two
equalities below.
\begin{align}
(g_t*\psi_\gamma)(x)
&=
\frac{
\sqrt{2\lambda_\gamma}
}{
\pi^{(d-1)/2}
}
\int_{\mathbb R^d}
g_t(x-y)
\frac{
K_\nu\!\left(\sqrt{2\lambda_\gamma}\,|y|\right)
}{
|y|^\nu
}
\,dy
\nonumber\\
&=
\frac{
\sqrt{2\lambda_\gamma}
}{
2\pi^{(d-1)/2}(2\lambda_\gamma)^{\nu/2}
}
\int_0^\infty
e^{-\lambda_\gamma a}
\int_{\mathbb R^d}
\frac{
e^{-\frac{|x-y|^2}{2t}}
}{
(2\pi t)^{d/2}
}
\frac{
e^{-\frac{|y|^2}{2a}}
}{
a^{d/2}
}
\,dy\,da
\nonumber\\
&=
\frac{
\sqrt{2\lambda_\gamma}
}{
2\pi^{(d-1)/2}(2\lambda_\gamma)^{\nu/2}
}
\int_0^\infty
e^{-\lambda_\gamma a}
\frac{
e^{-\frac{|x|^2}{2(t+a)}}
}{
(t+a)^{d/2}
}
\,da
\nonumber\\
&=
\frac{
\sqrt{2\lambda_\gamma}
}{
2\pi^{(d-1)/2}(2\lambda_\gamma)^{\nu/2}
}
e^{\lambda_\gamma t}
\int_t^\infty
e^{-\lambda_\gamma a}
\frac{
e^{-\frac{|x|^2}{2a}}
}{
a^{d/2}
}
\,da
\nonumber\\
&=
\frac{
\sqrt{2\lambda_\gamma}\,
\lambda_\gamma^\nu
}{
2\pi^{(d-1)/2}(2\lambda_\gamma)^{\nu/2}
}
e^{\lambda_\gamma t}
\int_{\lambda_\gamma t}^\infty
u^{-\nu-1}
e^{-u-\frac{\lambda_\gamma|x|^2}{2u}}
\,du. \nonumber
\end{align}
Here, the third equality follows from the semi-group (convolution)
property of \(d\)-dimensional Gaussian kernels, the fourth equality
follows from the change of variables \(a\mapsto a-t\), and the last
equality follows from the change of variables
\(u=\lambda_\gamma a\). Finally, applying~\eqref{DefIncompleteBessel}
to the last integral and recalling that \(\nu=d/2-1\), we obtain the desired result.
\end{proof}

\section{Proof of Proposition~\ref{PropSubMart}} \label{ProofPropSubMart}

\begin{proof}
By Lemma~\ref{LemmaGaussianGroundStateConvolution} and
definition~\eqref{Sfunction}, \(0\le S_r^\gamma(z)\le1\) for every
\(r\in(0,T]\) and \(z\in\mathbb R^d\). Since
\(S_0^\gamma(z)=1\) for every
\(z\in\mathbb R^d\setminus\{0\}\), the process
\(\mathbf S^{T,\gamma}\) is bounded. Moreover,
\((r,z)\mapsto S_r^\gamma(z)\) is continuous on
\((0,T]\times\mathbb R^d\) after setting \(S_r^\gamma(0):=0\) for
\(r>0\), and extends continuously to
\([0,T]\times(\mathbb R^d\setminus\{0\})\) by setting
\(S_0^\gamma(z):=1\). Therefore, since \(X\) is continuous
and \(\mathbb P_x^{T,\gamma}[X_T=0]=0\), the process
\(\mathbf S^{T,\gamma}\) is continuous. To establish the submartingale property for \(d\in\{2,3\}\), let \(0\le s<t\le T\). By the Markov property of \(\mathbb P_x^{T,\gamma}\), we have
\begin{align}
\mathbb E_x^{T,\gamma}
\big[
\mathbf S_t^{T,\gamma}
\,\big|\,
\mathcal F_s^{T,x}
\big]
&=
\int_{\mathbb R^d}
q_{t-s}^\gamma(X_s,y)
S_{T-t}^{\gamma}(y)
\,dy
\nonumber\\
&=
\frac{
e^{-\lambda_\gamma(T-s)}
}{
\psi_\gamma(X_s)
}
\int_{\mathbb R^d}
g_{T-s}^\gamma(X_s,y)
(g_{T-t}*\psi_\gamma)(y)
\,dy
\nonumber\\
&\ge
\frac{
e^{-\lambda_\gamma(T-s)}
}{
\psi_\gamma(X_s)
}
\int_{\mathbb R^d}
g_{t-s}(X_s-y)
(g_{T-t}*\psi_\gamma)(y)
\,dy
\nonumber\\
&=
\frac{
e^{-\lambda_\gamma(T-s)}
}{
\psi_\gamma(X_s)
}
(g_{T-s}*\psi_\gamma)(X_s)
=
S_{T-s}^{\gamma}(X_s)
=
\mathbf S_s^{T,\gamma}. \nonumber
\end{align}
Here the second equality uses~\eqref{DefTransDensity} and~\eqref{Sfunction},
the inequality uses the bound \(p_r^\gamma(z,y)\ge g_r(z-y)\), which follows from~\eqref{FundamentalSolution} since \(h_r^\gamma\ge0\), and the third equality uses the semigroup property of the free heat kernel $g$. Since \(\mathbf S^{T,\gamma}\) is a
bounded continuous submartingale with respect to the augmented
filtration \(\{\mathcal F_t^{T,x}\}_{t\in[0,T]}\), which satisfies the usual conditions, the Doob--Meyer decomposition
theorem~\cite[Thm.~1.4.10]{Karatzas} applies.

Using the elementary inequality
\(\lvert a+b\rvert^2\le2\lvert a\rvert^2+2\lvert b\rvert^2\) and the It\^o isometry, we obtain from~\eqref{MartingaleFormula}
\begin{align}
\mathbb E_x^{T,\gamma}
\left[
\big|\mathbf M_T^{T,\gamma}\big|^2
\right]
&\le
2\big|S_T^\gamma(x)\big|^2
+
2\mathbb E_x^{T,\gamma}
\left[
\int_0^T
\big|\nabla S_{T-s}^{\gamma}(X_s)\big|^2
\,ds
\right]
\nonumber\\
&=
2\big|S_T^\gamma(x)\big|^2
+
2\int_0^T
\int_{\mathbb R^d}
q_s^\gamma(x,y)
\big|\nabla S_{T-s}^{\gamma}(y)\big|^2
\,dy\,ds
<\infty, \nonumber
\end{align}
where the finiteness follows from Lemma~\ref{LemSquareIntegrability}, identity~\eqref{GradS}, and the bound \(0\le S_{T-s}^\gamma\le1\). Consequently, \(\mathbf M^{T,\gamma}\) is a square-integrable martingale.

By the definition of \(\mathbf A^{T,\gamma}\) and
formula~\eqref{MartingaleFormula}, the process
\(\mathbf A^{T,\gamma}
=
\{\mathbf A_t^{T,\gamma}\}_{t\in[0,T]}\), defined by
\(\mathbf A_t^{T,\gamma}
:=
\mathbf S_t^{T,\gamma}-\mathbf M_t^{T,\gamma}\), can be expressed as
\begin{align}\label{ADEF}
\mathbf A_t^{T,\gamma}
&=
S_{T-t}^{\gamma}(X_t)
-
S_T^{\gamma}(x)
-
\int_0^t
\mathbf 1_{\{X_s\neq0\}}
\big(\nabla S_{T-s}^{\gamma}\big)(X_s)
\cdot dW_s^{T,\gamma}
\nonumber\\
&=
S_{T-t}^{\gamma}(X_t)
-
S_T^{\gamma}(x)
-
\int_0^t
\big(\nabla S_{T-s}^{\gamma}\big)(X_s)
\cdot dW_s^{T,\gamma},
\qquad
\mathbb P_x^{T,\gamma}\text{-a.s.},
\end{align}
where the second equality holds since the set of times \(\{s \in [0,T] : X_s = 0\}\) has Lebesgue measure zero \(\mathbb P_x^{T,\gamma}\)-a.s., after setting \(\nabla S_t^\gamma(0):=0\). To prove that \(\mathbf A^{T,\gamma}\) is constant on the excursions of \(X\) away from the origin, fix an interval \([a,b]\subset[0,T]\) such that \(X_s\neq0\) for all \(s\in[a,b]\). Since
\((t,y)\mapsto S_{T-t}^{\gamma}(y)\) is \(C^{1,2}\) away from the
origin, It\^o's formula gives, for \(t\in[a,b]\),
\begin{align*}
d S_{T-t}^{\gamma}(X_t)
=\,&
-\Big(\frac{\partial}{\partial t}
S_{T-t}^{\gamma}\Big)(X_t)\,dt
+
\big(\nabla S_{T-t}^{\gamma}\big)(X_t)\cdot dX_t
+
\frac12
\big(\Delta S_{T-t}^{\gamma}\big)(X_t)\,dt  \\
=\,&
\Bigg[
-\Big(\frac{\partial}{\partial t}
S_{T-t}^{\gamma}\Big)(X_t)
+
\big(\nabla S_{T-t}^{\gamma}\big)(X_t)\cdot
b^\gamma(X_t)
+
\frac12
\big(\Delta S_{T-t}^{\gamma}\big)(X_t)
\Bigg]dt  
+
\big(\nabla S_{T-t}^{\gamma}\big)(X_t)\cdot dW_t^{T,\gamma} \nonumber \\
=\,&
\big(\nabla S_{T-t}^{\gamma}\big)(X_t)\cdot dW_t^{T,\gamma},
\end{align*}
where the second equality uses that the coordinate process \(X\) satisfies the SDE~\eqref{CanonicalGSDSDE} on \([a,b]\) by Lemma~\ref{lemmaLocalizedSDEAwayOrigin}, and the final equality follows because the term inside the square brackets vanishes identically by PDE~\eqref{PDEforS}. Therefore, writing~\eqref{ADEF} in differential form, we obtain
\[
d\mathbf A_t^{T,\gamma}
=
d S_{T-t}^{\gamma}(X_t)
-
\big(\nabla S_{T-t}^{\gamma}\big)(X_t)
\cdot dW_t^{T,\gamma}
=
0,
\qquad t\in[a,b].
\]
Thus, \(\mathbf A^{T,\gamma}\) is constant on every excursion interval \([a,b]\) away from the origin.

It remains to prove that \(\mathbf A^{T,\gamma}\) is increasing. For
\(\varepsilon>0\), let
\(\{\varrho_n^{\downarrow,\varepsilon}\}_{n\in\mathbb N}\) and
\(\{\varrho_n^{\uparrow,\varepsilon}\}_{n\in\mathbb N_0}\) be the
sequences of stopping times defined by
\(\varrho_0^{\uparrow,\varepsilon}=0\) and, for \(n\in\mathbb N\),
\begin{align}
\varrho_n^{\downarrow,\varepsilon}
:=
\inf\big\{
s\in(\varrho_{n-1}^{\uparrow,\varepsilon},T]:
X_s=0
\big\},
\qquad
\varrho_n^{\uparrow,\varepsilon}
:=
\inf\big\{
s\in(\varrho_n^{\downarrow,\varepsilon},T]:
|X_s|=\varepsilon
\big\}, \nonumber
\end{align}
where we interpret \(\inf\emptyset=\infty\). Next, define the process
\(\chi^\varepsilon=\{\chi_s^\varepsilon\}_{s\in[0,T]}\) by
\(\chi_s^\varepsilon=1\) whenever
\(s\in[\varrho_{n-1}^{\uparrow,\varepsilon},
\varrho_n^{\downarrow,\varepsilon})\) for some \(n\in\mathbb N\), and
\(\chi_s^\varepsilon=0\) otherwise. Define the process, for $ 0\le t\le T$, by
\begin{align}\label{MartingaleApproximationEpsilonFormula2}
\mathbf M_t^{T,\gamma,\varepsilon}
:=
S_T^\gamma(x)
+
\int_0^t
\chi_s^\varepsilon
\nabla S_{T-s}^{\gamma}(X_s)
\cdot dW_s^{T,\gamma}
=
S_T^\gamma(x)
+
\sum_{n=1}^{\infty}
\left(
\mathbf S_{t\wedge\varrho_n^{\downarrow,\varepsilon}}^{T,\gamma}
-
\mathbf S_{t\wedge\varrho_{n-1}^{\uparrow,\varepsilon}}^{T,\gamma}
\right),
\end{align}
where the second equality follows by applying It\^o's formula on each interval \([\varrho_{n-1}^{\uparrow,\varepsilon}, \varrho_n^{\downarrow,\varepsilon}]\) and summing over \(n\).
The process \(\mathbf M^{T,\gamma,\varepsilon}\) is a square-integrable martingale (since \(\mathbf M^{T,\gamma}\) is square-integrable and
\(0\le\chi_s^\varepsilon\le1\)) and, by Doob's inequality and the It\^o isometry,
\begin{align*}
\mathbb E_x^{T,\gamma}
\bigg[
\sup_{t\in[0,T]}
\Big|
\mathbf M_t^{T,\gamma,\varepsilon}
-
\mathbf M_t^{T,\gamma}
\Big|^2
\bigg]
&\le
4\,
\mathbb E_x^{T,\gamma}
\bigg[
\Big|
\mathbf M_T^{T,\gamma,\varepsilon}
-
\mathbf M_T^{T,\gamma}
\Big|^2
\bigg]
\\
&=
4\,
\mathbb E_x^{T,\gamma}
\Bigg[
\int_0^T
(1-\chi_s^\varepsilon)
\big|
\nabla S_{T-s}^{\gamma}(X_s)
\big|^2
\,ds
\Bigg]
\\
&\le
4
\int_0^T
\int_{\mathbb R^d}
\mathbf 1_{\{|y|\le\varepsilon\}}
q_s^\gamma(x,y)
\big|
\nabla S_{T-s}^{\gamma}(y)
\big|^2
\,dy\,ds
\longrightarrow0,
\qquad
\varepsilon\downarrow0,
\end{align*}
where in the first equality we have also used that
\(\chi_s^\varepsilon\in\{0,1\}\), and the final inequality follows from \(1-\chi_s^\varepsilon \le\mathbf 1_{\{|X_s|\le\varepsilon\}}\). The convergence to \(0\) as \(\varepsilon\downarrow0\) follows from Lemma~\ref{LemSquareIntegrability}, identity~\eqref{GradS}, and the dominated convergence theorem.

Define the process
\(\mathbf A^{T,\gamma,\varepsilon}
=
\{\mathbf A_t^{T,\gamma,\varepsilon}\}_{t\in[0,T]}\)
by
\[
\mathbf A_t^{T,\gamma,\varepsilon}
:=
\sum_{n=1}^{\infty}
\left(
\mathbf S_{t\wedge\varrho_n^{\uparrow,\varepsilon}}^{T,\gamma}
-
\mathbf S_{t\wedge\varrho_n^{\downarrow,\varepsilon}}^{T,\gamma}
\right)
=
\sum_{n=1}^{\infty}
\mathbf S_{t\wedge\varrho_n^{\uparrow,\varepsilon}}^{T,\gamma}
\mathbf 1_{\{\varrho_n^{\downarrow,\varepsilon}\le t\}},
\]
where the second equality uses that
\(\mathbf S_{\varrho_n^{\downarrow,\varepsilon}}^{T,\gamma}
=
S_{T-\varrho_n^{\downarrow,\varepsilon}}^\gamma(0)=0\)
on \(\{\varrho_n^{\downarrow,\varepsilon}<T\}\), together with
\(\mathbb P_x^{T,\gamma}[X_T=0]=0\). Consequently,
\(\mathbf S_{t\wedge\varrho_n^{\downarrow,\varepsilon}}^{T,\gamma}=0\)
whenever \(\varrho_n^{\downarrow,\varepsilon}\le t\), while
\(\mathbf 1_{\{\varrho_n^{\downarrow,\varepsilon}\le t\}}=0\) otherwise,
\(\mathbb P_x^{T,\gamma}\)-a.s. Let
\(\mathbf M^{T,\gamma,\varepsilon}\) be the process given
in~\eqref{MartingaleApproximationEpsilonFormula2}. Then we have
\[
\mathbf M_t^{T,\gamma}
+
\mathbf A_t^{T,\gamma}
=
\mathbf S_t^{T,\gamma}
=
\mathbf M_t^{T,\gamma,\varepsilon}
+
\mathbf A_t^{T,\gamma,\varepsilon},
\qquad
0\le t\le T.
\]
Consequently, since
\(\mathbf A_t^{T,\gamma,\varepsilon}-\mathbf A_t^{T,\gamma}
=
\mathbf M_t^{T,\gamma}-\mathbf M_t^{T,\gamma,\varepsilon}\)
for \(0\le t\le T\). Therefore, using the uniform $L^2$-convergence of $\mathbf M^{T,\gamma,\varepsilon} \to \mathbf M^{T,\gamma}$ established above, we have $\sup_{t\in[0,T]} \big| \mathbf A_t^{T,\gamma,\varepsilon} - \mathbf A_t^{T,\gamma} \big|$ converges in $L^2(\mathbf P_x^{T,\gamma})$. In particular, along a sequence \(\varepsilon_k\downarrow0\), we may
assume that
\[
\sup_{t\in[0,T]}
\left|
\mathbf A_t^{T,\gamma,\varepsilon_k}
-
\mathbf A_t^{T,\gamma}
\right|
\longrightarrow0,
\qquad
\mathbb P_x^{T,\gamma}\text{-a.s.}
\]
Fix \(\delta\in(0,T)\) and \(\varepsilon\in(0,1]\). For
\(u\in[0,T-\delta]\), the process
\(\mathbf A^{T,\gamma,\varepsilon}\) can decrease only during one of the
intervals
\(
[\varrho_n^{\downarrow,\varepsilon},
\varrho_n^{\uparrow,\varepsilon})
\),
on which \(|X_u|<\varepsilon\). During each such interval, its only
time-varying term is \(\mathbf S_u^{T,\gamma}\), while all the preceding
completed terms are nonnegative and constant. Since \(T-u \in [\delta, T]\) and \(|X_u| < \varepsilon\), it follows that
\[
\mathbf S_u^{T,\gamma}
=
S_{T-u}^{\gamma}(X_u)
\le
\sup_{\substack{r\in[\delta,T]\\ |y|\le\varepsilon}}
S_r^\gamma(y).
\]
Hence, for \(0\le s\le t\le T-\delta\),
\[
\mathbf A_t^{T,\gamma,\varepsilon}
\ge
\mathbf A_s^{T,\gamma,\varepsilon}
-
\sup_{\substack{r\in[\delta,T]\\ |y|\le\varepsilon}}
S_r^\gamma(y).
\]
Since \((r,y)\mapsto S_r^\gamma(y)\) is continuous on the compact set
\([\delta,T]\times\overline{B(0,1)}\) and
\(S_r^\gamma(0)=0\) for every \(r\in[\delta,T]\), the supremum above
converges to zero as \(\varepsilon\downarrow0\). Therefore, passing to
the sequence \(\varepsilon_k\downarrow0\) in the preceding inequality
and using the uniform convergence
\(\mathbf A^{T,\gamma,\varepsilon_k}\to\mathbf A^{T,\gamma}\), we obtain
\[
\mathbf A_t^{T,\gamma}
\ge
\mathbf A_s^{T,\gamma},
\qquad
0\le s\le t\le T-\delta,
\qquad
\mathbb P_x^{T,\gamma}\text{-a.s.}
\]
Thus, \(\mathbf A^{T,\gamma}\) is increasing on \([0,T-\delta]\). Since \(\delta\in(0,T)\) was arbitrary and \(\mathbf A^{T,\gamma}\) is
continuous, it follows that \(\mathbf A^{T,\gamma}\) is increasing on
\([0,T]\).
\end{proof}

\section{Proof of Corollary~\ref{CorGroundStateVisitation}} \label{SubsectionCorGroundStateVisitation}

\begin{proof}
\noindent Part (i). By Proposition~\ref{PropSubMart}, the increasing component \(\mathbf A^{T,\gamma}\) of the Doob--Meyer decomposition of
\(\mathbf S^{T,\gamma}\) is constant during the excursions of \(X\)
away from the origin. Since \(\mathbf A_0^{T,\gamma}=0\) and
\(\mathbf A^{T,\gamma}\) is continuous, it follows that
\[
\mathbf A_{t\wedge\tau}^{T,\gamma}=0,
\qquad
t\in[0,T],
\quad
\mathbb P_x^{T,\gamma}\text{-a.s.}
\]
Thus, applying the optional stopping theorem to the martingale
\(\mathbf M^{T,\gamma}
=
\mathbf S^{T,\gamma}-\mathbf A^{T,\gamma}\)
at the bounded stopping time \(\tau\wedge T\), we obtain
\begin{align} \label{UsingOST}
S_T^\gamma(x)
&=
\mathbf S_0^{T,\gamma}
=
\mathbb E_x^{T,\gamma}
\left[
\mathbf S_0^{T,\gamma}
-
\mathbf A_0^{T,\gamma}
\right]
=
\mathbb E_x^{T,\gamma}
\left[
\mathbf S_{\tau\wedge T}^{T,\gamma}
-
\mathbf A_{\tau\wedge T}^{T,\gamma}
\right]
=
\mathbb E_x^{T,\gamma}
\left[
\mathbf S_{\tau\wedge T}^{T,\gamma}
\right].
\end{align}
We next identify the terminal random variable in the last expectation.
On the event \(\{\tau<T\}\), we have \(X_\tau=0\) and
\(T-\tau>0\), and hence, by the definition
\(S_t^\gamma(0)=0\) for \(t>0\),
\[
\mathbf S_{\tau\wedge T}^{T,\gamma}
=
S_{T-\tau}^\gamma(X_\tau)
=
S_{T-\tau}^\gamma(0)
=
0.
\]
On the other hand, on the event \(\{\tau>T\}\), we have
\(\tau=\infty\), so that \(\tau\wedge T=T\) and \(X_T\neq0\).
Therefore, using \(S_0^\gamma(y)=1\) for \(y\neq0\), we have $\mathbf S_{\tau\wedge T}^{T,\gamma}
=
S_0^\gamma(X_T)
=
1$. Moreover, since \(X_T\) has transition density
\(q_T^\gamma(x,\cdot)\) with respect to Lebesgue measure, we have $\mathbb P_x^{T,\gamma}[\tau=T]
\le
\mathbb P_x^{T,\gamma}[X_T=0]
=
0$. Consequently,
\begin{align}\label{1OComp}
\mathbf S_{\tau\wedge T}^{T,\gamma}
=
\mathbf 1_{\{\tau>T\}},
\qquad
\mathbb P_x^{T,\gamma}\text{-a.s.}
\end{align}
Substituting~\eqref{1OComp} into~\eqref{UsingOST} gives the first equality below,
\begin{align} \label{PartOneFinal}
\mathbb P_x^{T,\gamma}[\tau>T]
=
S_T^\gamma(x)
=
e^{-\lambda_\gamma T}
\frac{(g_T * \psi_\gamma)(x)}{\psi_\gamma(x)}
=
\frac{
K_{d/2-1}\!\left(
\sqrt{2\lambda_\gamma}\,|x|,
\lambda_\gamma T
\right)
}{
K_{d/2-1}\!\left(
\sqrt{2\lambda_\gamma}\,|x|
\right)
},
\end{align}
where the second equality follows from~\eqref{Sfunction}, and the final equality follows from the expression for \(\psi_\gamma\) in~\eqref{EigenFunctionEigenValueUnified} together with Lemma~\ref{LemmaGaussianGroundStateConvolution}. \vspace{.2cm}

\noindent Part (ii). Fix \(t\in(0,T]\). Since
\(\mathbb P_x^{T,\gamma}\) is the restriction to \([0,T]\) of the
corresponding infinite-horizon ground-state diffusion law, its
restriction to \(\mathcal F_t^{T,x}\) coincides with the finite-horizon law \(\mathbb P_x^{t,\gamma}\), and hence we have the first equality below.
\begin{align}
\mathbb P_x^{T,\gamma}[\tau>t]
&=
\mathbb P_x^{t,\gamma}[\tau>t]
=
S_t^\gamma(x)
=
\frac{
K_{d/2-1}\!\left(
\sqrt{2\lambda_\gamma}\,|x|,
\lambda_\gamma t
\right)
}{
K_{d/2-1}\!\left(
\sqrt{2\lambda_\gamma}\,|x|
\right)
}  \nonumber
\end{align}
The second equality follows from~\eqref{PartOneFinal}. Consequently, we have
\begin{align}
\frac{
\mathbb P_x^{T,\gamma}
\left[
\tau\in dt
\,\middle|\,
\tau\le T
\right]
}{dt}
=
\frac{d}{dt}
\mathbb P_x^{T,\gamma}
\left[
\tau\le t
\,\middle|\,
\tau\le T
\right]
=
\frac{d}{dt}
\left(
\frac{
1-S_t^\gamma(x)
}{
1-S_T^\gamma(x)
}
\right)
=
\frac{-
\partial_t S_t^\gamma(x)
}{
1-S_T^\gamma(x)
}.\nonumber
\end{align}
Also, by differentiating the defining integral~\eqref{DefIncompleteBessel} of
the incomplete modified Bessel function, we obtain
\begin{align}
-\frac{d}{dt}
K_{d/2-1}\!\left(
\sqrt{2\lambda_\gamma}\,|x|,
\lambda_\gamma t
\right)
=
\left(
\frac{|x|}{\sqrt{2\lambda_\gamma}}
\right)^{d/2-1}
\frac{1}{2t^{d/2}}
e^{-\lambda_\gamma t-\frac{|x|^2}{2t}}\,. \nonumber
\end{align}
Combining the above observations, we obtain
\begin{align}
\frac{
\mathbb P_x^{T,\gamma}
\left[
\tau\in dt
\,\middle|\,
\tau\le T
\right]
}{dt}
=
\frac{
\left(
\frac{|x|}{\sqrt{2\lambda_\gamma}}
\right)^{d/2-1}
}{
K_{d/2-1}\!\left(
\sqrt{2\lambda_\gamma}\,|x|
\right)
-
K_{d/2-1}\!\left(
\sqrt{2\lambda_\gamma}\,|x|,
\lambda_\gamma T
\right)
}
\frac{1}{2t^{d/2}}
e^{-\lambda_\gamma t-\frac{|x|^2}{2t}} \, . \nonumber
\end{align}
\end{proof}

\end{appendix}

\end{document}